\documentclass[a4paper,reqno,11pt]{amsart}
\usepackage{newtxtext}
\usepackage{microtype}
\usepackage{fix-cm}
\usepackage[T1]{fontenc}
\usepackage[numbers]{natbib}  
\usepackage{textcomp}
\usepackage{amsfonts} 
\usepackage{amssymb}
\usepackage{amsthm}
\usepackage{amsmath} 
\usepackage{mathrsfs} 
\usepackage{dsfont}
\usepackage{esint}
\usepackage{algorithm}
\usepackage{soul}
\usepackage{algpseudocode}
\usepackage[english]{babel} 
\usepackage{xcolor}
\definecolor{gray}{named}{gray}

\usepackage[left=2.8cm,right=2.8cm,top=3.5cm,bottom=3cm,headsep=0.8cm]{geometry}
\parindent = 12pt

\input{insbox}
\usepackage{tikz}
\usepackage{pgf,pgfplots,wrapfig}
\usetikzlibrary{arrows}
 

\usepackage[scriptsize,bf]{caption}


\usepackage{enumerate}
\usepackage{enumitem}



\usepackage{hyperref}
\usepackage{cleveref}
\usepackage{mathtools}
\hypersetup{
    colorlinks,
    linkcolor={red!80!black},
    citecolor={blue!80!black}
}


\theoremstyle{plain}
\begingroup
\theoremstyle{plain}
\endgroup

\newtheorem{theorem}{Theorem}[section]

\newtheorem{proposition}{Proposition}[section]

\newtheorem{lemma}{Lemma}[section]

\theoremstyle{definition}

\theoremstyle{remark}
\newtheorem{remark}{Remark}[section]

\theoremstyle{definition}
\theoremstyle{remark}

\numberwithin{equation}{section}

\makeatletter
\newcommand{\myitem}[1]{%
\item[#1]\protected@edef\@currentlabel{#1}%
}
\makeatother




\newcommand{\R}{\mathbb{R}}

\mathsurround=1pt
\mathchardef\emptyset="001F

\renewcommand{\tilde}{\widetilde}

\sloppy

\author[Y-P. Choi]{Young-Pil Choi}
\address[Y-P. Choi]{Department of Mathematics, Yonsei University, Seoul 03722, Republic of Korea}
\email{ypchoi@yonsei.ac.kr}

\author[H-Z. Tang]{Houzhi Tang}
\address[H-Z. Tang]{School of Mathematics and Statistics, Anhui Normal University, Wuhu 241002, P.R. China}
	\email{houzhitang@ahnu.edu.cn}

\author[W-Y. Zou]{Weiyuan Zou}
\address[W-Y. Zou$^{1}$]{College of Mathematics and Physics, Beijing University of Chemical Technology, Beijing 100029, P.R. China}
\address[W-Y. Zou$^{2}$]{Mathematical Institute, University of Oxford, Oxford OX2 6GG, UK}
\email{zwy@amss.ac.cn}

\title[Global dynamics of damped Euler systems with exterior potentials]{Global dynamics of damped Euler systems with exterior potentials}

\keywords{Damped Euler equations, exterior potential, global well-posedness, optimal decay, blow-up analysis.}

\begin{document}
	
%
%
%
%
%
	\begin{abstract}
We study the three-dimensional isothermal Euler equations with linear damping and an exterior potential. For sufficiently large damping, we prove global well-posedness for arbitrarily large initial data by combining a parabolic comparison principle with scaled high-order energy estimates ensuring uniform density bounds. In the small-data regime with arbitrary damping, we establish global classical solutions and derive sharp algebraic decay rates via spectral analysis and frequency decomposition, and further prove their optimality under a mild non-degeneracy condition. Finally, for the pressureless damped system, we construct a weighted functional showing that solutions can blow up in finite time when the damping is insufficient, highlighting a qualitative difference from the pressured case.	
	\end{abstract}
	
	\maketitle

\tableofcontents

%
%
%
%
%
	\section{Introduction}

	In this paper, we are concerned with the global dynamics of the damped isothermal Euler equations with an exterior potential. The system reads
	\begin{equation}\label{Main1}
		\left\{
		\begin{aligned}	
			&\partial_t\rho+\textrm{div}(\rho u)=0,\\
			&\partial_t(\rho u)+\text{div}(\rho u\otimes u)+\nabla \rho = -\rho \nabla V-\gamma\rho u,
		\end{aligned}
		\right.
	\end{equation}
for $(x,t)\in\mathbb{R}^3\times\mathbb{R}_+$, where $\rho=\rho(x,t)>0$ and $u=u(x,t)$ denote the density and velocity fields, respectively. The parameter $\gamma>0$ represents the damping coefficient, and $V=V(x)\geq 0$ denotes a prescribed exterior potential. Throughout this paper, we assume that $V$ is sufficiently smooth and decays at infinity, namely,
	\begin{align*}
		\lim _{|x| \rightarrow +\infty}{V}(x,t)=0.
	\end{align*}
The initial data is given by
	\begin{align*}
		(\rho , u)(x,0)=(\rho_0,u_0),\quad \rho_0>0,
	\end{align*} 
and we impose the far-field condition
\begin{align*}
		\lim _{|x| \rightarrow +\infty}(\rho, u)(x,t)=\left(1, 0\right).
	\end{align*}
Note that $\rho_\infty=e^{-V}$ provides a stationary profile balancing the pressure and the potential forces. Since $V(x) \to 0$ as $|x| \to \infty$, we have $\rho_\infty(x) \to 1$, which is consistent with the far-field state. It is worth noting that the decaying nature of $V$ implies that the potential does not act as a confining force in the usual sense, in contrast to classical trapping potentials that diverge at infinity. Thus, the model describes a compressible isothermal flow subject to a localized external field and frictional damping, without a confining mechanism at large distances. The damping term $-\gamma \rho u$ introduces dissipation and plays a key role in stabilizing the dynamics, while the external potential locally perturbs the equilibrium state. This combination of localized forcing and damping makes the system a natural framework for investigating the relaxation toward the far-field equilibrium and understanding how the interplay of pressure, potential forces, and friction governs the global dynamics.

When $V=0$, the system \eqref{Main1} reduces to the classical damped Euler system with isothermal pressure, which has been extensively studied over the last decades. In the one-dimensional isentropic case, the global existence of BV and $L^\infty$ entropy solutions was established in \cite{DCP2009, DCL1989,HLY1998,HP2006}, and the long-time behavior of solutions was investigated in \cite{HL1992,HMP2005,HP2006,HPW2011,KWY2000}. For the multi-dimensional case, the global existence of small classical solutions and their large-time decay rates were obtained in \cite{BHN2008,HN2003,LWY2009,STW2003,TW2012}, where the analysis combined energy methods with Green's function techniques. The global well-posedness and decay in critical Besov spaces were further studied in \cite{FX2009,JZ2012,XZ2024}. In the isothermal case, the global existence of large solutions for sufficiently large damping coefficients has been proved in \cite{P2024}, together with the diffusive scaling limit showing convergence of the density to the solution of the heat equation as the relaxation time tends to zero. More recently, the global existence of large $L^2$ solutions for arbitrary damping was established in \cite{htzz2025}. When an exterior potential is present ($V\neq 0$), the damping interacts with the potential field to drive the solution toward the nontrivial stationary state $\rho_\infty$. Convergence to equilibrium in this setting has been studied using entropy methods \cite{kt2015}, and quantitative convergence in the 2-Wasserstein distance has been obtained under combined exterior and interaction potentials \cite{CCT2019}.

Despite these important advances, a rigorous understanding of the global well-posedness and large-time behavior for the damped isothermal Euler equations with a general exterior potential in multiple dimensions remains incomplete. In particular, while damping naturally induces dissipation, the presence of a spatially varying potential significantly alters the long-time dynamics, requiring refined energy estimates that capture the combined effects of pressure, damping, and confinement.   
 
Motivated by these observations, we now explain the main questions driving this work. The first motivation stems from the fact that most of the aforementioned results focus on the case of constant equilibrium states, while the analysis for non-constant equilibrium states remains largely unexplored. In particular, the recent work \cite{P2024} established the global existence of large solutions for the isothermal Euler system with damping under sufficiently large damping coefficients and constant equilibrium states. However, when an exterior potential is introduced, the equilibrium state becomes non-constant, and it is unclear whether such large solutions can still exist globally. This unresolved question serves as one of the primary motivations of the present study.

The second motivation is related to the decay rates of solutions. For small initial data, it is natural to expect global well-posedness for the damped Euler system with an exterior potential. However, the optimal decay rates toward non-constant equilibrium states remain unknown. In particular, the presence of an exterior potential significantly influences the dissipation mechanism, preventing the high-order derivatives from achieving the same decay rates as in the constant equilibrium case. This raises the question of how the exterior potential alters the long-time dynamics. Thus, we aim to investigate the essential differences in the large-time behavior between constant and non-constant equilibrium states.

The third motivation concerns the blow-up phenomena. It is well known that large solutions of the isentropic damped Euler system can develop singularities in finite time, as rigorously demonstrated in \cite{STW2003}. However, for the isothermal case, the blow-up problem remains highly challenging (see \cite{Chr19}). The primary difficulty lies in the absence of finite propagation speed, which plays a crucial role in the blow-up analysis for isentropic flows in \cite{STW2003}. To address this, we are inspired to investigate the pressureless damped Euler system as an intermediate step. Since our initial density is strictly away from vacuum, the classical blow-up criteria in \cite{Sider1985} are not directly applicable. Thus, we are motivated to explore whether blow-up analysis can still be carried out for this system under merely bounded damping coefficients. For recent developments on singularity formation in compressible Euler flows, we refer to \cite{BDSV23, BSV23, Chr19, IT24} and the references therein.

 \subsection{Notations}
We collect several notations and conventions used throughout the paper. 
\begin{itemize}
\item For $1\le p\le\infty$, $L^p(\mathbb{R}^3)$ denotes the usual Lebesgue space with norm $\|\cdot\|_{L^p}$, and $W^{k,p}(\mathbb{R}^3)$ the Sobolev space of order $k$, with norm $\|\cdot\|_{W^{k,p}}$. When $p=2$, we write $W^{k,2}(\mathbb{R}^3)=H^k(\mathbb{R}^3)$ with the norm $\|\cdot\|_{H^k}$. For simplicity, we often omit the domain, e.g.,
\[
	\|u\|_{H^k(\mathbb{R}^3)}=\|u\|_{H^k},\quad \|u\|_{L^p(\mathbb{R}^3)}=\|u\|_{L^p}.
\]
\item We denote by $C$ a generic positive constant, which may vary from line to line. 
\item We write $f_1\lesssim f_2$ if there exists a constant $C>0$ such that $f_1\leq C f_2$, and $f_1\sim f_2$ if $f_1 \lesssim f_2$ and $f_2 \lesssim f_1$.  
\item We use $f(x)=O(g(x))$ to indicate $\lim_{x\to0}\frac{f(x)}{g(x)}=C$ for some constant $C$.  
\item For an integer $k\ge0$, $\nabla^k f$ denotes the collection of all derivatives of order $k$, i.e.,
\[
\nabla^k f = \big\{ \partial_{x_1}^{\ell_1}\partial_{x_2}^{\ell_2}\partial_{x_3}^{\ell_3} f : |\ell|=\ell_1+\ell_2+\ell_3 = k \big\}.
\]
For a pair $(f,g)$ and a Banach space $X$, we write
\[
\|(f,g)\|_X := \|f\|_X + \|g\|_X.
\]
\item The Fourier transform of $f$ is denoted by $\hat{f}$ or $\mathcal{F}[f]$ and defined as
\[
		\hat{f}(\xi) =\mathcal{F}[f](\xi)=(2\pi)^{-\frac{3}{2}}\int_{\mathbb{R}^3} f(x)e^{-ix\cdot\xi}\,dx,\quad \xi\in\mathbb{R}^3.
\]
Its inverse is denoted by $\mathcal{F}^{-1}$. For $a\in\mathbb{R}$, the pseudodifferential operator $\Lambda^a$ is defined by
\[
\Lambda^a f := \mathcal{F}^{-1}\big(|\xi|^a \hat{f}(\xi)\big).
\]
\end{itemize}
	%
	%
	%
	%
	%
	%
	\subsection{Main results}\label{sec:main}
We now present the main results of this paper. 

Our first theorem concerns the large-damping regime, where global-in-time strong solutions exist for arbitrarily large initial data, provided the density stays strictly away from vacuum. This extends the existing theory for the damped Euler equations without potentials to the case of non-constant equilibrium states induced by an exterior potential.	
	
	Due to the isothermal pressure, it is convenient to introduce the logarithmic density $a=\ln\rho$, so that the system \eqref{Main1} takes the form
	\begin{equation}\label{Main2}
		\left\{
		\begin{aligned}	
			&\partial_ta+\text{div}u+u\cdot\nabla a=0,\\
			&\partial_tu+u\cdot\nabla u+\nabla a+\gamma u+\nabla V=0,
		\end{aligned}
		\right.
	\end{equation}
	with initial data and far-field condition:
	\begin{align*}
		(a,u)(x,0)=(a_0,u_0),\quad \text{and}\quad  \lim _{|x| \rightarrow+\infty}(a, u)(x,t)=\left(a_\infty, 0\right),
	\end{align*}
	where $a_0$ and $a_\infty$ are given by
	\[
		a_0=\ln \rho_0, \quad a_\infty=\ln\rho_\infty=-V.
		\]
	
	To study perturbations around the stationary state $(\rho_\infty,0)$, we define
	\[
	\phi=a-a_\infty,
	\]
	which leads to the reformulated system
	\begin{equation}\label{Main31}
		\left\{
		\begin{aligned}	
			&\partial_t\phi+\text{div}u=-u\cdot\nabla \phi-u\cdot\nabla a_\infty=-u\cdot\nabla \phi+u\cdot\nabla V:=f_1,\\
			&\partial_tu+\nabla \phi+\gamma u=-u\cdot\nabla u:=f_2,
		\end{aligned}
		\right.
	\end{equation}
	with initial data and far-field conditions
	\begin{align}\label{ID1}
		(\phi,u)(x,0)=(\phi_0,u_0),\quad \text{and}\quad  \lim _{|x| \rightarrow+\infty}(\phi, u)(x,t)=\left(0, 0\right).
	\end{align}		

	\begin{theorem}\label{thm1}
Let the initial data $(\phi_0, u_0)\in H^3(\mathbb{R}^3)$, and assume there exist constants $\rho_1,\rho_2>0$ such that
		\begin{align*}
			0<\rho_1\leq \rho_0(x)\leq \rho_2, \quad \forall \, x\in \mathbb{R}^3.	
		\end{align*}
		If the damping coefficient $\gamma$ is sufficiently large and the exterior potential satisfies $\|\nabla V\|_{H^3}\leq \epsilon_0$ with $\epsilon_0>0$ a small constant, then the system \eqref{Main31}-\eqref{ID1} admits a unique global regular solution $(\phi,u)$ satisfying 
			\begin{align*}
				&\|(\phi,u)(t)\|_{H^3}^2 
				+\int_0^t \left(\gamma\| u(s)\|_{H^3}^2 
				+\gamma^{-1}\|\nabla\phi(s)\|_{H^2}^2 \right) ds\leq C_0,
			\end{align*}
		where the positive constant $C_0=C(B_0)$ only depends on the initial data $B_0=\|(\phi_0,u_0)\|_{H^3}$.
	\end{theorem}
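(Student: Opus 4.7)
The plan is to prove Theorem \ref{thm1} by a continuation argument anchored on two independent a priori estimates: pointwise bounds on the density $\rho=e^{\phi-V}$ derived from a parabolic comparison principle, and scaled $H^3$ energy estimates yielding the asserted dissipation inequality. Since \eqref{Main31} is symmetric hyperbolic, standard theory produces a local-in-time $H^3$ solution with $\rho$ bounded away from vacuum. The task is to upgrade these local bounds to uniform ones on any $[0,T]$ within the interval of existence, with constants depending only on $B_0:=\|(\phi_0,u_0)\|_{H^3}$; a standard continuation criterion then forces $T^*=\infty$.

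\textbf{Density bounds via parabolic comparison.} For large $\gamma$, the damped momentum equation formally gives $u\sim -\gamma^{-1}\nabla\phi$, so the continuity equation inherits a parabolic structure. To make this rigorous, I would take the divergence of the momentum equation in \eqref{Main31} to eliminate $\mathrm{div}\,u$ in the continuity equation, obtaining the damped wave formulation
\[
\partial_{tt}\phi+\gamma\,\partial_t\phi-\Delta\phi=\partial_t f_1+\gamma f_1-\mathrm{div}\,f_2,
\]
or, equivalently, the fact that $\Psi:=\phi-\gamma^{-1}\mathrm{div}\,u$ satisfies a heat equation with diffusivity $\gamma^{-1}$ and source controlled by $f_1$, $\gamma^{-1}\mathrm{div}\,f_2$ and a $\gamma^{-2}$ remainder. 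A maximum-principle comparison at spatial extrema of $\Psi$ (or $\phi$), where $\Delta\Psi$ has the favorable sign and the transport piece $u\cdot\nabla\phi$ inside $f_1$ vanishes, combined with $H^3$ control of the sources from the bootstrap, yields improved pointwise bounds of the form $\tfrac{1}{2}\rho_1\le\rho(x,t)\le 2\rho_2$, uniform in time, provided $\gamma$ is large and $\|\nabla V\|_{H^3}$ is small.

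\textbf{Scaled $H^3$ energy estimates.} For each $k=0,1,2,3$, apply $\nabla^k$ to \eqref{Main31} and test against $\nabla^k\phi$ and $\nabla^k u$; the leading-order cross-terms $(\nabla^{k+1}\phi,\nabla^k u)$ cancel upon integration by parts, and the remaining commutator and nonlinear terms $[\nabla^k,u\cdot\nabla]\phi$, $\nabla^k(u\cdot\nabla u)$, $\nabla^k(u\cdot\nabla V)$ are estimated by Moser calculus, using the density bounds from the previous step to justify the algebra on functions of $\phi$. This yields
\[
\tfrac{d}{dt}\|(\phi,u)\|_{H^3}^2+\gamma\|u\|_{H^3}^2 \lesssim C(B_0)\bigl(\|(\phi,u)\|_{H^3}+\|\nabla V\|_{H^3}\bigr)\|(\phi,u)\|_{H^3}^2.
\]
To recover the $\gamma^{-1}\|\nabla\phi\|_{H^2}^2$ dissipation, I would add a weighted cross-functional $\gamma^{-1}\sum_{k=0}^{2}\int\nabla^k u\cdot\nabla^{k+1}\phi\,dx$: differentiating in time and using the momentum equation produces a coercive $-\gamma^{-1}\|\nabla^{k+1}\phi\|_{L^2}^2$ term, while the $\gamma u$-coupling is absorbed by Young's inequality into a fraction of the velocity dissipation. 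Summing all estimates and taking $\gamma$ large, $\|\nabla V\|_{H^3}\le\epsilon_0$ small (both depending on $B_0$) closes the Lyapunov inequality.

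\textbf{Main obstacle.} The core difficulty is the coupling between the two estimates: the parabolic comparison requires $H^3$ control on $(\phi,u)$ to bound the sources $f_1,f_2$, while the Moser-type estimates require $\rho$ bounded strictly above and below. A naive parabolic smoothing applied to $\partial_t\phi-\gamma^{-1}\Delta\phi\approx f_1$ only yields a $\sqrt{t}$-growth of $\|\phi\|_{L^\infty}$, which is insufficient for a global bound; the argument must genuinely exploit the maximum-principle cancellation at extrema together with the $\gamma^{-1}$-weighting of all remainder terms to obtain a truly time-uniform pointwise estimate on $\rho$. I would close the bootstrap by choosing $\gamma$ large enough (depending on $B_0$) and $\|\nabla V\|_{H^3}\le\epsilon_0$ small enough that both the density bounds and the $H^3$ bound are strictly improved; since these are open conditions, the local solution extends to $T^*=\infty$ and satisfies the asserted dissipation inequality.
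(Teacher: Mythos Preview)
Your high-order energy estimate and the hypocoercivity cross-functional $\gamma^{-1}\sum_{k=0}^{2}\int\nabla^k u\cdot\nabla^{k+1}\phi\,dx$ match the paper's approach, and that part would go through.

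The density-bound argument, however, has a genuine gap. First, a small but telling error: at a spatial extremum of $\Psi=\phi-\gamma^{-1}\mathrm{div}\,u$ you have $\nabla\Psi=0$, not $\nabla\phi=0$, so the transport piece $u\cdot\nabla\phi$ in the source does \emph{not} vanish there; conversely, at an extremum of $\phi$ you lose the clean heat structure for $\Psi$. More fundamentally, even granting some cancellation, the remaining source terms (e.g.\ $u\cdot\nabla V$ and $\gamma^{-2}\Delta\mathrm{div}\,u$) are only pointwise bounded under the bootstrap, not integrable in $L^1_tL^\infty_x$. The parabolic maximum principle then yields at best $\|\Psi(t)\|_{L^\infty}\le\|\Psi_0\|_{L^\infty}+\int_0^t\|g(s)\|_{L^\infty}\,ds$, and from $\int_0^t\|u\|_{H^3}^2\,ds\le\delta^2$ one only gets $\int_0^t\|u\|_{L^\infty}\,ds\lesssim\delta\sqrt{t}$ by Cauchy--Schwarz, which grows. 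You correctly flag this as the main obstacle but do not supply a mechanism to overcome it.

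The key idea you are missing is to \emph{decouple} the maximum principle from the nonlinear source. The paper introduces an auxiliary $\rho_*$ solving the genuinely linear parabolic equation $\partial_t\rho_*-\gamma^{-1}\Delta\rho_*-\gamma^{-1}\mathrm{div}(\rho_*\nabla V)=0$ with the same initial data; here the max principle applies cleanly (no nonlinear source) and yields $\rho_1\le\rho_*\le\rho_2$ uniformly in time. The difference $\rho-\rho_*$ is then controlled not by a second pointwise argument but by $H^2$ energy estimates plus Sobolev embedding: working in the momentum formulation, one introduces a damped-heat proxy $m_L$ for the momentum and the combination $w=\nabla(\rho-\rho_\infty)+\gamma(m-m_L)$, and shows that $\bar\rho:=\rho-\rho_*-\gamma^{-1}\mathrm{div}\,m_L$ satisfies a heat equation whose source is controlled in $L^2_tH^2_x$ by the bootstrap quantity $\int_0^t\|(m,u)\|_{H^3}^2\,ds\le\delta^2$. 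This time integral is \emph{finite}, so $\|\bar\rho\|_{H^2}$ (hence $\|\rho-\rho_*\|_{L^\infty}$) stays uniformly small. The switch from an $L^\infty$-based comparison to an $L^2$-based one is precisely what converts the divergent time integral in your sketch into a bounded one.
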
 	
	
	\begin{remark}
Theorem \ref{thm1} demonstrates that strong damping effectively stabilizes the flow, ensuring global-in-time existence and uniform control of high-order Sobolev norms. In particular, if $V\equiv0$, Theorem \ref{thm1} recovers the result of \cite{P2024}, confirming consistency with the known theory. Moreover, for large $\gamma$, the momentum equation undergoes a diffusive relaxation, and \eqref{Main1} asymptotically reduces to the parabolic equation
		\begin{equation*}
			\left\{	
			\begin{aligned}
				&\partial_t\rho-\Delta \rho-{\rm{div}}(\rho \nabla V)=0,	\\
				&\rho(x,0)=\rho_0,
			\end{aligned}
			\right.
		\end{equation*}	
which reflects the dominant role of damping in driving the system toward equilibrium. The rigorous derivation of such parabolic limits can be obtained by following arguments in \cite{P2024}; see also \cite{CCT2019, CJ21, HPW2011, JR02, LT17} for related strong relaxation limits from the compressible Euler system to diffusive models such as the heat or porous medium equations.
	\end{remark}

Our second main result concerns the global well-posedness and sharp decay rates of solutions to the damped isothermal Euler system \eqref{Main1} for small initial data and arbitrary damping coefficients $\gamma>0$. Unlike the large-damping regime in Theorem \ref{thm1}, here we work in the small-data framework, allowing us to control the nonlinear terms without assuming any structural dominance of the damping.

We then establish the global existence of classical solutions for small initial perturbations, together with uniform-in-time energy bounds and explicit algebraic decay rates of low- and high-order derivatives.
	\begin{theorem}\label{thm3}
		Assume that $(\phi_0,u_0)\in H^3(\mathbb{R}^3)$ and $V\in H^4(\mathbb{R}^3)$. There exists a small positive constant $\delta_0$ such that if
		\begin{align*}
			\|(\phi_0,u_0)\|_{H^3}+\|\nabla V\|_{H^3}\leq \delta_0,
		\end{align*}
		then the Cauchy problem \eqref{Main31}-\eqref{ID1} admits a unique global-in-time regular solution 
		\[
		(\phi,u) \in L^\infty(\R_+; H^3(\R^3)) \times L^\infty(\R_+; H^3(\R^3))
		\]
satisfying the energy estimate
		\begin{equation}\label{main-est}
			\begin{aligned}
				\|(\phi, u)(t)\|_{H^3}^2
				+C \int_0^t\left(\| \nabla\phi(\tau)\|_{H^2}^2+\| u(\tau)\|_{H^3}^2\right) d\tau \leq C\delta_0^2,
			\end{aligned}
		\end{equation} 
		for any $t\in\mathbb{R}_+$. Moreover, if $(\phi_0,u_0) \in L^1(\R^3) \times L^1(\R^3)$, then the solution decays algebraically as
		\begin{align*}
			\|\nabla^k\phi(t)\|_{L^2}&\leq C(1+t)^{-\frac{3}{4}-\frac{k}{2}},\quad k=0,1,2,\nonumber\\
			\|\nabla^ku(t)\|_{L^2}&\leq  C(1+t)^{-\frac{5}{4}-\frac{k}{2}},\quad k=0,1,\nonumber\\
			\|\nabla^3\phi(t)\|_{L^2}&\leq C(1+t)^{-\frac{7}{4}},\nonumber\\
			\quad \|\nabla ^2u(t)\|_{H^1}&\leq C(1+t)^{-\frac{7}{4}},
		\end{align*}
		where the positive constant $C>0$ is independent of time and depends on $\delta_0$ and $\|(\phi_0,u_0)\|_{L^1}$.
	\end{theorem}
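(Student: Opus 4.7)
The plan is a two-stage strategy. First I would establish the global existence and the uniform estimate \eqref{main-est} by combining a local-in-time construction with a continuity argument based on an enhanced energy-plus-dissipation functional; second, I would extract the algebraic decay rates by a Fourier/spectral analysis of the linearized semigroup together with a frequency-decomposed Duhamel bootstrap. Throughout, smallness of both $\|(\phi_0,u_0)\|_{H^3}$ and $\|\nabla V\|_{H^3}$ is used to absorb the extra linear and lower-order terms produced by the non-constant equilibrium $a_\infty=-V$.

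For the a priori energy estimate I would apply $\partial^\alpha$ with $|\alpha|\le 3$ to \eqref{Main31} and test against $(\partial^\alpha\phi,\partial^\alpha u)$. Writing $E(t):=\|(\phi,u)(t)\|_{H^3}^2$ and $D(t):=\|\nabla\phi\|_{H^2}^2+\|u\|_{H^3}^2$, the symmetric structure of the linear part yields a natural dissipation of $\gamma\|u\|_{H^3}^2$ but no dissipation of $\phi$; the quadratic nonlinearities $u\cdot\nabla\phi$, $u\cdot\nabla u$ and the source $u\cdot\nabla V$ are controlled by Moser/Kato--Ponce commutator estimates and Sobolev embedding, producing errors of the form $(\sqrt{E(t)}+\|\nabla V\|_{H^3})D(t)$. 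To recover the missing dissipation of $\nabla\phi$ I would employ the standard Matsumura--Nishida cross term: differentiating $\int \partial^\beta u\cdot\nabla\partial^\beta\phi\,dx$ in time for $|\beta|\le 2$ and using the momentum equation produces $\|\nabla\partial^\beta\phi\|_{L^2}^2$ at the cost of controllable lower-order quantities in $u$. A small linear combination yields a Lyapunov functional $\mathcal{E}(t)\sim E(t)$ with $\frac{d}{dt}\mathcal{E}(t)+c\,D(t)\le 0$ once $\delta_0$ is sufficiently small, which closes the bootstrap and delivers \eqref{main-est}.

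For the decay estimates I would recast \eqref{Main31} as $\partial_t U+\mathcal{A}U=N(U)+L_V U$, where $U=(\phi,u)^\top$, $\mathcal{A}U=(\operatorname{div} u,\nabla\phi+\gamma u)^\top$ is the constant-coefficient damped acoustic operator, $N(U)$ collects the quadratic nonlinearities, and $L_V U=(u\cdot\nabla V,0)^\top$. A direct symbol analysis of $\mathcal{A}$ reveals low-frequency parabolic behaviour (one eigenvalue of order $-|\xi|^2/\gamma$) and high-frequency exponential damping, yielding the $L^1\to L^2$ semigroup bound
\[
\|\nabla^k e^{-t\mathcal{A}}U_0\|_{L^2}\lesssim (1+t)^{-3/4-k/2}\|U_0\|_{L^1}+e^{-ct}\|U_0\|_{H^k},
\]
with an additional factor $(1+t)^{-1/2}$ for the $u$-component reflecting the extra damping gain. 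Inserting this into Duhamel's formula, estimating $\|u\cdot\nabla\phi\|_{L^1\cap L^2}$ and $\|u\cdot\nabla u\|_{L^1\cap L^2}$ via the uniform bound \eqref{main-est}, and treating $u\cdot\nabla V$ as a linear perturbation with small coefficient in $L^1\cap H^3$, a standard time-weighted bootstrap yields the stated decay rates for $k\le 2$ on $\phi$ and $k\le 1$ on $u$.

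The step I expect to be the main obstacle is the sharper rate $(1+t)^{-7/4}$ for $\nabla^3\phi$ and $\|\nabla^2 u\|_{H^1}$, because at top order the energy framework controls these norms only through $E(t)$ and the naive Duhamel estimate saturates well before $k=3$. I would close this by coupling the dissipation integrability $\int_0^\infty D(\tau)\,d\tau<\infty$ from \eqref{main-est} with a time-weighted high-order energy inequality: multiplying the $H^3$ estimate by $(1+t)^{7/2}$, integrating, and feeding in the already-established lower-order decay (valid up to order $2$) via interpolation produces the claimed $(1+t)^{-7/4}$ rate. The non-constant equilibrium $\rho_\infty$ enters only through the linear term $u\cdot\nabla V$ and commutators with $\nabla V$ in the continuity equation, both absorbed by the smallness of $\|\nabla V\|_{H^3}$; tracking these potential-dependent perturbations carefully throughout is the main technical cost imposed by the presence of $V$.
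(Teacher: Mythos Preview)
Your global-existence argument matches the paper exactly: energy estimates at all orders, Kato--Ponce commutator bounds for the quadratic terms, and the Matsumura--Nishida cross term $\int \nabla^k u\cdot\nabla^{k+1}\phi\,dx$ to recover dissipation of $\nabla\phi$, combined into a Lyapunov functional that closes under smallness. The basic decay scheme (spectral analysis of the damped acoustic operator, $L^1\to L^2$ semigroup bounds with an extra $(1+t)^{-1/2}$ gain on the $u$-component, and a Duhamel bootstrap) also coincides with the paper's approach.

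The one place your outline diverges from the paper, and where I would flag a potential gap, is the mechanism for the top-order rate $(1+t)^{-7/4}$. Your proposal of ``multiplying the $H^3$ estimate by $(1+t)^{7/2}$ and feeding in lower-order decay via interpolation'' does not quite close as stated: the time-weighted inequality produces a term $(1+t)^{5/2}\|\nabla^2\phi\|_{L^2}^2$ that is not absorbed by the available dissipation $\|\nabla^3\phi\|_{L^2}^2+\|\nabla^2 u\|_{H^1}^2$, and inserting only the already-known bound $\|\nabla^2\phi\|_{L^2}\lesssim(1+t)^{-5/4}$ yields no improvement (you recover $(1+t)^{-5/4}$, not $(1+t)^{-7/4}$). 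The paper circumvents this by a \emph{frequency-localized} hypocoercivity estimate: applying the high-frequency projector $\mathcal{K}_\infty$ before forming the cross term $\int \nabla^{k-1}u^h\cdot\nabla^k\phi^h\,dx$ yields dissipation of $\|\nabla^k\phi^h\|_{L^2}^2$, so that the resulting differential inequality reads
\[
\frac{d}{dt}\big(\|\nabla^2\phi\|_{H^1}^2+\|\nabla^2 u\|_{H^1}^2\big)+c\big(\|\nabla^2\phi\|_{H^1}^2+\|\nabla^2 u\|_{H^1}^2\big)\le C\|\nabla^2\phi^{\ell}\|_{L^2}^2+C\delta_0\|\nabla u\|_{L^2}^2,
\]
with only the \emph{low-frequency} source $\|\nabla^2\phi^{\ell}\|_{L^2}^2$ on the right. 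That source is then bounded via Duhamel at the rate $(1+t)^{-7/4}$ (using the already-established lower-order decay inside the nonlinear convolutions), and Gr\"onwall closes a bootstrap on the time-weighted functional $M(t)=\sup_\tau (1+\tau)^{7/4}\big(\|\nabla^2\phi(\tau)\|_{H^1}+\|\nabla u(\tau)\|_{H^2}\big)$. Once you insert this frequency-split step, your outline is complete and equivalent to the paper's.
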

	\begin{remark}
		By using classical Sobolev interpolation inequalities, we obtain the $L^\infty$ decay rates of the solution as follows:
		\[
		\|\phi(t)\|_{L^\infty}\leq C(1+t)^{-\frac{3}{2}} \quad \mbox{and} \quad \|u(t)\|_{L^\infty}\leq C(1+t)^{-\frac{7}{4}},
		\]
		for some $C>0$ independent of $t$.
	\end{remark}
	
	\begin{remark}
		The presence of the coupling term $u\cdot\nabla V$ prevents further improvement of the high-order decay rates, particularly for $\|\nabla^3\phi\|_{L^2}$ and $\|\nabla^2 u\|_{L^2}$. 
	\end{remark}
	
Moreover, by imposing a mild non-degeneracy condition on the low-frequency part of the initial data, we show that these decay rates are in fact sharp, thereby fully characterizing the large-time dynamics of the system.
	\begin{theorem}\label{thm4}
		Assume that all the conditions of Theorem \ref{thm3} hold, and that the Fourier transform $\hat{\phi}_0$ of the initial density perturbation satisfies
		\begin{align}\label{in-data-optimal}
			\inf_{ |\xi|<r_0}|\hat{\phi}_0(\xi)|\geq c_0>0,
		\end{align}
for some $c_0>0$ and sufficiently small $r_0>0$. Then there exist constants $d_0,\bar d_0>0$, independent of $t$, such that for large $t$		
		\begin{align*}
			&d_0(1+t)^{-\frac{3}{4}-\frac{k}{2}}\leq \|\nabla^k\phi(t)\|_{L^2}\leq C(1+t)^{-\frac{3}{4}-\frac{k}{2}},\quad k=0,1,2, \\
			&\bar{d}_0(1+t)^{-\frac{5}{4}-\frac{k}{2}}\leq \|\nabla^ku(t)\|_{L^2}\leq C(1+t)^{-\frac{5}{4}-\frac{k}{2}},\quad k=0,1.
		\end{align*}
	\end{theorem}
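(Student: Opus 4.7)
The plan is to combine Duhamel's formula with a low-frequency spectral analysis of the linearization of \eqref{Main31}. Writing $\mathcal L$ for the linear operator obtained by dropping the right-hand sides $f_1, f_2$, Duhamel gives
\[
(\phi, u)(t) = e^{-t\mathcal L}(\phi_0, u_0) + \int_0^t e^{-(t-\tau)\mathcal L}(f_1, f_2)(\tau)\, d\tau =: (\phi_L, u_L)(t) + (\phi_{NL}, u_{NL})(t).
\]
The goal is to show that $(\phi_L, u_L)$ saturates the claimed lower rate with a coefficient proportional to the non-degeneracy constant $c_0$, while $(\phi_{NL}, u_{NL})$ is controlled by $C\delta_0$ times at most the same rate. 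A triangle inequality $\|\nabla^k \phi\|_{L^2} \geq \|\nabla^k\phi_L\|_{L^2} - \|\nabla^k\phi_{NL}\|_{L^2}$ then yields the lower bound once $\delta_0$ is small enough relative to $c_0$.

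For the linear part I would work in Fourier with the Helmholtz decomposition $u = u_c + u_{nc}$, setting $d = \Lambda^{-1}\mathrm{div}\,u$. The divergence-free piece $u_{nc}$ satisfies $\partial_t u_{nc} + \gamma u_{nc} = 0$ and decays exponentially, while the compressible subsystem for $(\hat\phi, \hat d)$ has symbol matrix with eigenvalues $\lambda_\pm(\xi) = (\gamma \pm \sqrt{\gamma^2 - 4|\xi|^2})/2$. For small $|\xi|$ one finds $\lambda_-(\xi) \sim |\xi|^2/\gamma$ and $\lambda_+(\xi) \sim \gamma$, so the low-frequency dynamics is parabolic with kernel $e^{-|\xi|^2 t/\gamma}$. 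A direct diagonalization of the $2\times 2$ exponential gives, for $|\xi|$ small and $t$ large,
\[
\hat\phi_L(t,\xi) = e^{-\lambda_-(\xi)t}\hat\phi_0(\xi) + O(|\xi|/\gamma)\, e^{-\lambda_-(\xi)t}\hat d_0(\xi) + O(e^{-\gamma t/2}),
\]
with an analogous expansion for $\hat d_L$ whose leading term carries an extra $|\xi|/\gamma$ factor. The hypothesis \eqref{in-data-optimal} together with the uniform bound $|\hat d_0(\xi)|\leq C\|u_0\|_{L^1}$ lets me pick $r_* \in (0, r_0)$ so that the $\hat\phi_0$-contribution dominates on $\{|\xi|<r_*\}$; the change of variable $\xi = \eta/\sqrt t$ then produces $\|\nabla^k \phi_L(t)\|_{L^2}^2 \gtrsim c_0^2(1+t)^{-3/2-k}$ and, with the extra $|\xi|$-factor in $\hat u_L$, $\|\nabla^k u_L(t)\|_{L^2}^2\gtrsim c_0^2(1+t)^{-5/2-k}$.

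For the Duhamel remainder I would rewrite the nonlinearities up to fast-decaying remainders, $u\cdot\nabla\phi = \mathrm{div}(u\phi) - \phi\,\mathrm{div}\,u$, $u\cdot\nabla u = \mathrm{div}(u\otimes u) - u\,\mathrm{div}\,u$, and the linear-in-$V$ source $u\cdot\nabla V = \mathrm{div}(uV) - V\,\mathrm{div}\,u$. The divergence-form pieces gain an extra $|\xi|$-factor in Fourier and hence an additional $t^{-1/2}$ in the low-frequency semigroup bound, producing contributions of size $C(\delta_0^2 + \delta_0\|V\|_{H^4})(1+t)^{-5/4-k/2}$, strictly faster than the linear lower rate. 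The non-divergence remainders $\phi\,\mathrm{div}\,u$, $u\,\mathrm{div}\,u$, and $V\,\mathrm{div}\,u$ do not gain this $t^{-1/2}$, but each inherits the rapid $(1+\tau)^{-7/4}$ decay of $\mathrm{div}\,u$ from Theorem \ref{thm3}; combining the $L^1$ and $L^2$ bounds from Theorem \ref{thm3} with standard semigroup estimates then produces $\|\nabla^k \phi_{NL}(t)\|_{L^2} \leq C(\delta_0^2 + \delta_0\|V\|_{H^4})(1+t)^{-3/4-k/2}$, and similarly for $u_{NL}$.

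I expect the main obstacle to be the non-divergence remainder $V\,\mathrm{div}\,u$. Because $\|V\|_{L^2}$ is controlled only by $\|V\|_{H^4}$ rather than by $\delta_0$, the pairing $\int_0^t (1+t-\tau)^{-3/4}\|V\,\mathrm{div}\,u\|_{L^1}\,d\tau$ only matches the linear lower rate $(1+t)^{-3/4}$ instead of beating it. The saving grace is that this contribution carries a prefactor proportional to $\delta_0\|V\|_{H^4}$: by shrinking the smallness threshold of Theorem \ref{thm3} so that $\delta_0 \ll c_0\|V\|_{H^4}^{-1}$ — a permissible refinement since $c_0$ and $V$ are fixed in the hypothesis — the nonlinear correction is dominated by half of the linear lower bound and the triangle inequality closes, yielding $\|\nabla^k \phi\|_{L^2} \geq d_0 (1+t)^{-3/4-k/2}$ and $\|\nabla^k u\|_{L^2} \geq \bar d_0(1+t)^{-5/4-k/2}$ for $t$ sufficiently large.
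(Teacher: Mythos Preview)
Your overall strategy---Duhamel splitting into a linear piece and a nonlinear Duhamel remainder, lower bound for the former from the non-degeneracy of $\hat\phi_0$, upper bound for the latter with a small prefactor, then triangle inequality---is exactly how the paper handles $k=0$. But the way you treat the potential term contains a genuine error. The divergence-form rewriting $u\cdot\nabla V=\mathrm{div}(uV)-V\,\mathrm{div}\,u$ is self-defeating: the original term already carries the \emph{small} factor $\|\nabla V\|_{L^2}\leq\delta_0$, so that $\|u\cdot\nabla V\|_{L^1}\leq\|u\|_{L^2}\|\nabla V\|_{L^2}\leq C\delta_0(\delta_0+N_0)(1+\tau)^{-5/4}$ and the Duhamel remainder has prefactor $\delta_0(\delta_0+N_0)$, which can be absorbed. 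Your rewriting trades $\|\nabla V\|_{L^2}$ for $\|V\|_{L^2}$, which is \emph{not} assumed small, and then your ``saving grace'' asserts a prefactor $\delta_0\|V\|_{H^4}$ on the $V\,\mathrm{div}\,u$ contribution. That prefactor is wrong: the decay $\|\mathrm{div}\,u(\tau)\|_{L^2}\lesssim(1+\tau)^{-7/4}$ from Theorem~\ref{thm3} comes with constant $C(\delta_0+N_0+N_0^2)$, and $N_0=\|(\phi_0,u_0)\|_{L^1}$ is fixed, not small. So the actual prefactor is of order $N_0\|V\|_{L^2}$ and shrinking $\delta_0$ does not help. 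The ``main obstacle'' you identify is one you created; the paper simply keeps $u\cdot\nabla V$ intact and, using also a Cauchy--Schwarz-in-time argument against the dissipation bound $\int_0^\infty\|\nabla(\phi,u)\|_{L^2}^2\,d\tau\leq C\delta_0^2$, obtains a remainder of size $C(\delta_0^2+\delta_0 N_0+\delta_0 N_0^2)(1+t)^{-3/4}$.

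For $k=1,2$ the paper also takes a different route from your direct Duhamel at each level. Even after fixing the $V$-term, the direct approach runs into trouble at $k=2$: for instance $\int_0^t(1+t-\tau)^{-7/4}(1+\tau)^{-5/4}\,d\tau$ only yields $(1+t)^{-5/4}$, not the needed $(1+t)^{-7/4}$, so your claim that the divergence-form pieces give $(1+t)^{-5/4-k/2}$ fails for $k\geq 1$ whenever the source decays no faster than $(1+\tau)^{-5/4}$. The paper bypasses all of this by proving instead an \emph{upper} bound $\|\Lambda^{-1}\phi(t)\|_{L^2}\lesssim(1+t)^{-1/4}$ via a single Duhamel estimate, and then using the interpolation inequality $\|\phi\|_{L^2}\lesssim\|\Lambda^{-1}\phi\|_{L^2}^{k/(k+1)}\|\nabla^k\phi\|_{L^2}^{1/(k+1)}$ (Lemma~\ref{lema5}) to convert the $k=0$ lower bound into lower bounds for $\|\nabla^k\phi\|_{L^2}$; similarly for $u$. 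This avoids the higher-$k$ Duhamel bookkeeping entirely.
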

\begin{remark}
The condition on the low-frequency part of $\hat{\phi}_0$ ensures that the diffusive modes dominate the long-time behavior, leading to sharp decay rates. This result confirms that the upper bounds in Theorem \ref{thm3} cannot, in general, be improved.
\end{remark}

We next revisit the pressureless Euler system with damping and no external potential:
	\begin{equation}\label{Main1111}
		\left\{
		\begin{aligned}	
			&\partial_t\rho+\textrm{div}(\rho u)=0,\\
			&\partial_t(\rho u)+\text{div}(\rho u\otimes u)+\gamma\rho u=0,
		\end{aligned}
		\right.
	\end{equation}
	with the initial data and the far-field condition:
	\begin{align}\label{BC111}
		(\rho,u)(x,0)=(\rho_0,u_0),\quad \text{and}\quad  \lim _{|x| \rightarrow+\infty}(\rho, u)(x,t)=\left(1, 0\right).
	\end{align}
	
Although damping introduces a stabilizing mechanism, it can be insufficient to prevent singularity formation when it is too weak. In contrast to the isothermal system studied in Theorems \ref{thm1}-\ref{thm4}, the absence of a pressure term eliminates the dispersive and smoothing effects that help maintain global regularity. Consequently, the flow retains the compressive features of the undamped Euler dynamics, and damping alone may fail to suppress the development of singularities.

We note that Riccati-type arguments have been employed in related blow-up analyses for compressible Euler flows. In particular, the work of \cite{CH09} treated the compressible Euler equations with pressure and showed that, under the assumptions of vanishing initial vorticity and compression vacuum states, the divergence of velocity along particle trajectories satisfies a scalar Riccati inequality, leading to finite-time blow-up. This idea was subsequently adapted in \cite[Appendix A]{HKK2014} for the pressureless Euler system with damping, where blow-up was obtained under the structural constraint
\[
\Omega_0(a)=\frac{1}{2}\left(\nabla u_0(a)-(\nabla u_0(a))^\perp\right), \quad \text{div}u_0(a)<-3\gamma.
\]
The zero-vorticity condition is essential in closing the Riccati inequality and thus significantly restricts the admissible class of initial data. In contrast, our approach relies on weighted energy-momentum functionals and does not impose this restriction, thereby yielding explicit blow-up criteria valid for a much broader range of initial configurations.

The following theorem establishes a precise blow-up criterion in this setting.
	\begin{theorem}\label{thm2}
Let $(\rho_0-1,u_0)\in H^3(\mathbb{R}^3)\times H^4(\mathbb{R}^3)$ with $\rho_0>0$ and  $\|u_0\|_{H^3}\leq a_0$ (not necessarily small). Define 
		\begin{align*}
			A_1(0):=\int_{\mathbb{R}^3}\rho_0 u_0\cdot\nabla H dx,\quad A_2(0):=\int_{\mathbb{R}^3}\rho_0 Hdx>0,\quad E_0:=\int_{\mathbb{R}^3}\rho_0|u_0|^2dx,
		\end{align*}
		where 
		\[
		H(x):=\gamma e^{-\frac{|x|^2}{\gamma}}.
		\]
If 
		\begin{align*}
			M_*:=\frac{-A_1(0)}{A_2(0)+E_0}>\left(8C_*^2a_0^2 \right)^{\frac{1}{5}}.
		\end{align*}
and the damping coefficient $\gamma$ satisfies	 
\[
			4	\left(\frac{C_*a_0}{M_*^2} \right)^2<\gamma <\frac{1}{2}M_*,
\]
		where $C_*$ is a positive constant from the Sobolev embedding inequality, then the strong solution to \eqref{Main1111}--\eqref{BC111} blows up in finite time.
	\end{theorem}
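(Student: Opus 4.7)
The plan is to extract from the weighted scalar functionals $A_1(t),A_2(t),E(t)$ a coupled system of ODE estimates and close it into a Riccati-type differential inequality that forces finite-time singularity under the stated algebraic conditions on $M_*$ and $\gamma$. First, I would compute their evolution: continuity immediately gives $A_2'(t)=A_1(t)$; testing the momentum equation against $u$ and integrating by parts (the pressureless case has no pressure boundary terms) gives $E'(t)=-2\gamma E(t)$, so $E(t)=E_0 e^{-2\gamma t}$ is explicit; and testing the momentum equation against $\nabla H$ gives
\[
A_1'(t)+\gamma A_1(t)=I_1(t):=\int_{\mathbb{R}^3}\rho\,(u\otimes u){:}\nabla^2 H\,dx.
\]

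The second step exploits the Gaussian structure $H=\gamma e^{-|x|^2/\gamma}$: a direct computation gives $\nabla^2 H=\bigl(-\tfrac{2}{\gamma}I+\tfrac{4}{\gamma^2}x\otimes x\bigr)H$, hence the algebraic identity $(u\otimes u){:}\nabla^2 H=-\tfrac{2H|u|^2}{\gamma}+\tfrac{(u\cdot\nabla H)^2}{H}$. Applying Cauchy--Schwarz to the factorisation $A_1=\int\sqrt{\rho H}\cdot\sqrt{\rho}\,(u\cdot\nabla H)/\sqrt{H}\,dx$ yields $\int\rho(u\cdot\nabla H)^2/H\,dx\ge A_1^2/A_2$, and the pointwise bound $H\le\gamma$ gives $\int\rho H|u|^2\,dx\le\gamma E$. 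Together they furnish the key quadratic lower bound
\[
I_1(t)\geq \frac{A_1(t)^2}{A_2(t)}-2E(t),
\]
which injects a quadratic source into the ODE for $A_1$. In parallel, I would use the preserved $L^\infty$ bound $\|u(t)\|_{L^\infty}\le\|u_0\|_{L^\infty}\le C_*a_0$---which follows for the pressureless damped system from $D_t u=-\gamma u$ along characteristics together with the Sobolev embedding $H^3\hookrightarrow L^\infty$---to upper-bound $I_1$ and to control $A_2(t)$ on the interval of existence, making the quadratic $A_1^2/A_2$ term genuinely super-linear.

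The third and decisive step combines these pieces into a scalar Riccati inequality. Working with (or a suitable variant of) the normalised quantity $M(t):=-A_1(t)/(A_2(t)+E(t))$, and coupling the inequality $A_1'+\gamma A_1\ge A_1^2/A_2-2E$ with $A_2'=A_1$ and the explicit decay of $E$, I would derive an inequality of the form $\Psi'(t)\le -c\,\Psi(t)^2+(\text{small})$ for a carefully chosen $\Psi$ built from $(A_1,A_2,E)$. The initial threshold $M_*>(8C_*^2a_0^2)^{1/5}$ ensures that the quadratic driving term dominates the perturbative contributions from $E$ and from the $u$-driven cubic nonlinearity controlled by $C_*a_0$; the upper bound $\gamma<M_*/2$ prevents the linear damping $\gamma A_1$ from neutralising the quadratic source; and the lower bound $\gamma>4(C_*a_0/M_*^2)^2$ supplies the minimal dissipation needed to propagate a bootstrap on $M(t)$. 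Together these thresholds guarantee that $\Psi(t)$ reaches its critical value (equivalently, that a weighted moment of $\rho$ hits a singular configuration) at some finite time $T_*$, while any classical $H^3$ solution must keep $A_1,A_2,E$ finite and $A_2$ positive on its interval of existence, yielding the required contradiction.

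The hardest step will be the third: identifying the correct scalar combination of $(A_1,A_2,E)$ whose Riccati inequality actually closes despite $A_2$ being time-dependent through $A_2'=A_1$ rather than conserved, and matching the three algebraic thresholds precisely to the quantitative margins needed by the bootstrap. The particular choice $H=\gamma e^{-|x|^2/\gamma}$---with the amplitude $\gamma$ scaled identically to the exponent---is essential here: it produces the clean Hessian identity above and makes $\|H\|_{L^\infty}=\gamma$ enter at exactly the order that renders the algebraic conditions on $M_*$ and $\gamma$ natural rather than ad hoc.
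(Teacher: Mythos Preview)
Your central Riccati idea has a sign problem that prevents it from closing. The lower bound $I_1\ge A_1^2/A_2-2E$ is correct, but it yields a \emph{lower} bound on $A_1'+\gamma A_1$. Since the blow-up mechanism needs $A_1<0$ to drive $A_2'=A_1<0$, a lower bound on $A_1'$ pushes $A_1$ \emph{toward} zero rather than away from it; equivalently, with $R=A_1/A_2<0$ your inequality becomes $R'\ge -\gamma R-2E/A_2$, which is stabilising. The quadratic term you isolate via Cauchy--Schwarz is precisely what \emph{prevents} $A_1$ from becoming too negative, so it cannot be the engine of finite-time singularity. No choice of $\Psi$ built from $(A_1,A_2,E)$ will turn this into a blow-up Riccati inequality with the right sign.

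The paper's proof goes in the opposite direction: it uses only the \emph{upper} bound on $I_1$. From the $L^\infty$ control $\|u(t)\|_{L^\infty}\le C_*a_0$ (which you correctly obtain from $D_tu=-\gamma u$), the pointwise estimate $|D^2H|^2\le 10H/\gamma$, and Cauchy--Schwarz, one gets $I_1\le D_*(A_2+E_0)$ with $D_*=C_*a_0/\sqrt{\gamma}$. This produces the purely \emph{linear} second-order inequality
\[
A_2''+\gamma A_2'\le D_*A_2+D_*E_0,
\]
to which the explicit ODE comparison lemma (Lemma~\ref{lem2}) applies: the dominating solution has the form $c_+e^{\beta t}+c_-e^{-(\gamma+\beta)t}+\text{const}$ with $\beta=\tfrac12(-\gamma+\sqrt{\gamma^2+4D_*})>0$. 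The algebraic hypotheses on $M_*$ and $\gamma$ are exactly what force $c_+<0$ (the condition rewrites as $\beta>D_*/M_*$, and the window $4(C_*a_0/M_*^2)^2<\gamma<\tfrac12 M_*$ guarantees this), so $A_2(t)\to-\infty$, contradicting $A_2>0$. The mechanism is linear-exponential, not Riccati; your Hessian identity, while correct, plays no role.
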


	\begin{remark}	
		In \cite{HKK2014}, blow-up was established for initial data belonging to the singular set
\[
			\mathcal{S}:=\big\{a \in \mathbb{R}^d:\Omega_0(a)=0,\quad\text{div}u_0(a)<-3\gamma\big\}.
\]
Our criterion in Theorem \ref{thm2} goes beyond this structural restriction. To illustrate, consider small damping $\gamma>0$ and the initial data		
		\begin{align*}
			\rho_0=1,\quad u_0=(x_1,0,x_3)e^{-\frac{|x|^2}{\gamma}},\quad a_0=\gamma^\frac{5}{4}.
		\end{align*}
		A direct calculation yields
	\begin{align*}
			\|u_0\|_{L^2}\sim \gamma^{\frac{5}{4}},\quad \|\nabla u_0\|_{L^2}\sim \gamma^{\frac{3}{4}},
		\end{align*}
		and 
		\begin{align*}
			A_1(0) \sim -\gamma^{\frac{5}{2}}, \quad A_2(0) \sim \gamma^{\frac{5}{2}}, \quad E_0 \sim \gamma^{\frac{5}{2}}.
		\end{align*}
		This implies
		\begin{align*}
			M_*=\frac{-A_1(0)}{A_2(0)+E_0}=O(1)\geq \left(8C_*^2a_0^2 \right)^{\frac{1}{5}}=\left(8C_*^2\right)^{\frac{1}{5}}\gamma^{\frac{1}{2}},
		\end{align*}
		and 
		\begin{align*}
				4	\left(\frac{C_*a_0}{M_*^2} \right)^2=4	\left(\frac{C_*}{M_*^2} \right)^2\gamma^{\frac{5}{2}}<\gamma <\frac{1}{2}M_*.
		\end{align*}

Thus, the conditions of Theorem \ref{thm2} are satisfied. To further examine the vorticity, let us compute the gradient of the initial velocity:
		\begin{align*}
		\nabla u_0(x) = e^{-\frac{|x|^2}{\gamma}} \begin{pmatrix} 
			1 - \frac{2x_1^2}{\gamma} & -\frac{2x_1 x_2}{\gamma} & -\frac{2x_1 x_3}{\gamma} \\
			0 & 0 & 0 \\
			-\frac{2x_1 x_3}{\gamma} & -\frac{2x_2 x_3}{\gamma} & 1 - \frac{2x_3^2}{\gamma}
		\end{pmatrix}.
		\end{align*}	
For instance, if we choose $x=a=(a_1,0,a_3)\neq 0$, the associated vorticity tensor is given by	
		\begin{align*}
			\Omega_0(a)=\frac{1}{2}(\nabla u_0(a)-(\nabla u_0(a))^\perp)\neq 0,
		\end{align*}
which is non-vanishing.
	
This example shows that blow-up can occur even without the zero-vorticity condition from \cite{HKK2014}. In particular, small initial data may still lead to singularity formation when damping is weak, highlighting the critical role of damping in stabilizing the flow.
	\end{remark}

\begin{remark}
Due to the isothermal pressure, the density loses its finite propagation property. Therefore, classical blow-up strategies for isentropic Euler or pure Euler systems \cite{Sider1985,STW2003} fail in this setting, making the blow-up analysis for the isothermal damping system significantly more challenging.
\end{remark}

	%
	%
	%
	%
	%
	%
	
\subsection{Strategy of proof}
We now outline the key ideas employed in the proofs of the main theorems.

For Theorem \ref{thm1}, inspired by the work \cite{P2024} for the damped Euler system without external potentials, we aim to establish global well-posedness for the full system with an exterior potential, where the equilibrium state is no longer constant. The main challenge is that the momentum formulation plays a crucial role in \cite{P2024}, while in our case the natural formulation involves the velocity. To overcome this, we introduce the \emph{a priori} assumption 
\begin{align*}
&\gamma^{-\frac{1}{2}}\|(\rho-\rho_\infty, m, u)(t)\|_{H^3} +\left(\int_0^t\|(m,u)(s)\|_{H^3}^2 \,ds\right)^{\frac{1}{2}} \leq \delta,\\
	& \int_0^t\|\nabla(\rho-\rho_\infty)(s)\|_{H^2}^2 ds \leq \gamma \mathcal{R}_0^2,
\end{align*}
for some positive constants $\delta > 0$ and $\mathcal{R}_0$, and construct energy estimates that exploit the smallness of $\gamma^{-1}$ and $\delta$. A central step is to guarantee uniform-in-time lower and upper bounds for the density. For this, we employ a parabolic comparison argument: we construct an auxiliary function $\rho_*$ solving the damped diffusion equation
\[
\partial_t\rho_*-\gamma^{-1}\Delta\rho_*-\gamma^{-1}{\rm{div}}(\rho_*\nabla V)=0,
\]
and use a maximum principle to prove that $\rho_*$ remains strictly between two positive constants (Lemma \ref{lemM}). Comparing the true solution $\rho$ with $\rho_*$ then allows us to propagate these bounds to the Euler system (Proposition \ref{A2}). With this uniform control, we derive high-order energy estimates in Sobolev spaces and close the bootstrap bound. Finally, by a continuity argument, we extend the local solution to a unique global classical one, establishing Theorem \ref{thm1}.

For Theorems \ref{thm3} and \ref{thm4}, we study the small-data regime with arbitrary damping coefficients. We first prove global well-posedness via standard energy methods applied to the reformulated perturbation system.  To investigate the large-time behavior, we perform a spectral analysis of the linearized problem: the Green's function is decomposed into low- and high-frequency components, and Duhamel's principle is used to control the nonlinear terms. This approach yields the algebraic decay rates
\[ 
\|\phi(t)\|_{H^3} \lesssim (1+t)^{-\frac{3}{4}}, \quad \|u(t)\|_{H^3}\lesssim (1+t)^{-\frac{5}{4}}.
\] 
To improve the decay of higher derivatives, we further refine the analysis by introducing a carefully constructed energy functional that isolates the dissipative contribution of the low-frequency modes. This refinement leads to improved decay estimates for $\|\nabla^2 (\phi, u)(t)\|_{H^1}$, although the presence of the potential limits the attainable rates for the highest-order derivatives. Finally, under the mild non-degeneracy condition \eqref{in-data-optimal} on the low-frequency part of the initial data, we derive matching lower bounds for $\|\nabla^k (\phi, u)(t)\|_{L^2}$, thereby establishing the optimality of the obtained decay rates.

For Theorem \ref{thm2}, we turn to the pressureless Euler system with damping. Here, the absence of pressure destroys the dispersive smoothing effects, making the flow prone to compression-driven singularities. Since the initial density is strictly positive, classical vacuum-based blow-up arguments (e.g. \cite{Sider1985}) do not apply. Instead, we introduce the weighted momentum functionals
\[
A_1(t):=\int_{\mathbb{R}^3}\rho u\cdot\nabla H dx,\quad A_2(t):=\int_{\mathbb{R}^3}\rho Hdx>0
\]
with a Gaussian weight 
		\[
		H(x)=\gamma e^{-\frac{|x|^2}{\gamma}}.
		\]
This choice of $H$ is critical: its Gaussian profile ensures fast spatial decay, eliminating boundary contributions, and its $\gamma$-dependent scaling balances the nonlinear transport and damping terms. Differentiating $A_2(t)$ twice and combining with the momentum equation yields that $A_2(t)$ satisfies a \emph{second-order differential inequality} of the form
	\begin{align*}
		\frac{d^2A_2(t)}{dt^2}+\gamma \frac{dA_2(t)}{dt}\leq D_*A_2(t)+D_*E_0,
	\end{align*}
where $D_*$ depends on the initial data and damping. Careful analysis of the roots of the corresponding characteristic equation shows that, under suitable assumptions on the initial data and damping coefficient, the solution $A_2(t)$ becomes negative in finite time, contradicting the positivity of $A_2$. This contradiction implies that the classical solution must blow up in finite time.

	%
	%
	%
	%
	%
	%
\subsection{Outline of the paper}
 The remainder of the paper is organized as follows. In Section \ref{sec.2}, we provide the local-in-time well-posedness of the reformulated system and collect auxiliary tools, including commutator bounds, interpolation inequalities, and frequency decomposition estimates. Section \ref{Sec:thm1} addresses the large-damping regime and proves global well-posedness without smallness assumptions (Theorem \ref{thm1}). In Section \ref{Sec:thm3-4}, we turn to small perturbations with arbitrary damping, deriving uniform energy bounds and explicit decay rates via a time-weighted energy method and frequency decomposition (Theorem \ref{thm3}). Section \ref{sec:opt-decay} establishes sharp lower bounds for these decay rates under a mild non-degeneracy assumption, confirming their optimality (Theorem \ref{thm4}). Finally, Section \ref{Sec:thm2} analyzes the pressureless damped Euler system, constructing weighted energy-momentum functionals to prove finite-time blow-up for insufficient damping (Theorem \ref{thm2}).

	%
	%
	%
	%
	%
	%
\section{Preliminaries}\label{sec.2}	
In this section, we collect several auxiliary results and technical tools that will be used throughout the paper. We first establish the local-in-time well-posedness of the reformulated system \eqref{Main31}, which provides the foundation for the subsequent global existence analysis. Then we present a collection of functional inequalities, commutator estimates, frequency decompositions, and interpolation lemmas that will be repeatedly invoked in our energy estimates and decay analysis.
	%
	%
	%
	%
	%
	%
\subsection{Local-in-time well-posedness}  As a first step, we recall that the Cauchy problem for the reformulated system \eqref{Main31} admits a unique local classical solution for sufficiently regular initial data. The proof follows the standard approach based on the contraction mapping principle combined with Sobolev embeddings (see also \cite{STW2003} for a related argument). For completeness, we state the result without proof.
	\begin{theorem}\label{lem-loc}
		Assume that the initial data satisfy $(\phi_0,u_0)\in H^3(\mathbb{R}^3) \times H^3(\mathbb{R}^3)$. Then there exists a short time $T_0>0$ such that the system \eqref{Main31} admits a unique classical solution $(\phi,u)$ satisfying
		\begin{align*}
			&\phi \in C^0([0,T_0];H^3(\mathbb{R}^3))\cap C^1([0,T_0];H^2(\mathbb{R}^3)),\\
			&u\in C^0([0,T_0];H^3(\mathbb{R}^3))\cap C^1([0,T_0];H^2(\mathbb{R}^3)).
		\end{align*}
	\end{theorem}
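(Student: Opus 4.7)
The plan is to regard \eqref{Main31} as a quasi-linear symmetric hyperbolic system with linear zero-order damping $\gamma u$ and a smooth inhomogeneous forcing $u\cdot\nabla V$, and to produce a local classical solution by a Picard iteration combined with $H^3$ energy estimates. The principal coupling between $\phi$ and $u$ is formally antisymmetric because $\int_{\mathbb{R}^3}\nabla\phi\cdot u\,dx+\int_{\mathbb{R}^3}\phi\,\text{div}\,u\,dx=0$ after integration by parts; this cancellation, together with the usual Friedrichs/Kato machinery, is what drives the whole argument and makes the lower-order terms harmless for \emph{local} existence.

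First, I would build the iteration scheme. Setting $(\phi^0,u^0)=(0,0)$, define $(\phi^{n+1},u^{n+1})$ as the solution of the linear problem
\begin{align*}
&\partial_t\phi^{n+1}+u^n\cdot\nabla\phi^{n+1}+\text{div}\,u^{n+1}=u^n\cdot\nabla V,\\
&\partial_t u^{n+1}+u^n\cdot\nabla u^{n+1}+\nabla\phi^{n+1}+\gamma u^{n+1}=0,
\end{align*}
with initial data $(\phi_0,u_0)$. The solvability of each linearized step with $(\phi^{n+1},u^{n+1})\in C^0([0,T];H^3)$ is classical once the coefficient $u^n$ lies in $C^0([0,T];H^3)\hookrightarrow C^0([0,T];\mathrm{Lip}(\mathbb{R}^3))$. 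To obtain uniform bounds, for each $|\alpha|\le 3$ I apply $\partial^\alpha$ and pair with $(\partial^\alpha\phi^{n+1},\partial^\alpha u^{n+1})$ in $L^2$. The principal coupling cancels by integration by parts, the transport commutators $[\partial^\alpha,u^n\cdot\nabla]$ are controlled by Moser-type inequalities in terms of $\|u^n\|_{H^3}$, and the forcing is bounded using $\|\nabla V\|_{H^3}$. This yields
\[
\frac{d}{dt}\|(\phi^{n+1},u^{n+1})\|_{H^3}^2\le C\bigl(1+\|u^n\|_{H^3}\bigr)\|(\phi^{n+1},u^{n+1})\|_{H^3}^2+C\|u^n\|_{H^3}\|\nabla V\|_{H^3}^2,
\]
so a bootstrap on a short interval $[0,T_0]$, with $T_0>0$ depending only on $\|(\phi_0,u_0)\|_{H^3}$ and $\|\nabla V\|_{H^3}$, gives $\sup_{n}\sup_{t\in[0,T_0]}\|(\phi^n,u^n)(t)\|_{H^3}\le M$ for some $M>0$.

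Next, I would run a contraction in $L^2$ on the differences $(\delta\phi^n,\delta u^n)=(\phi^{n+1}-\phi^n,u^{n+1}-u^n)$. The difference system retains the symmetric-hyperbolic structure at the principal order, and its right-hand side is controlled by $\|\delta u^{n-1}\|_{L^2}$ together with the uniform $H^3$ bound $M$. A Gronwall argument gives $\sup_{[0,T_0]}\|(\delta\phi^n,\delta u^n)\|_{L^2}\le\tfrac12\sup_{[0,T_0]}\|(\delta\phi^{n-1},\delta u^{n-1})\|_{L^2}$ after possibly shrinking $T_0$. Hence $(\phi^n,u^n)$ is Cauchy in $C^0([0,T_0];L^2)$; interpolating with the uniform $H^3$ bound produces strong convergence in $C^0([0,T_0];H^{3-\varepsilon})$ for every $\varepsilon>0$ and weak-$\ast$ convergence in $L^\infty([0,T_0];H^3)$ toward a limit $(\phi,u)$ that solves \eqref{Main31}. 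Uniqueness follows from the same $L^2$ estimate applied to two solutions with identical data, and $C^1$-in-time regularity with values in $H^2$ is immediate from the equations once $(\phi,u)\in L^\infty_tH^3_x$.

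The main technical point, as usual in hyperbolic local theory, is upgrading weak-$\ast$ convergence at the top regularity to strong continuity, i.e.\ from $L^\infty([0,T_0];H^3)$ to $C^0([0,T_0];H^3)$. I would resolve this by a Bona--Smith regularization: mollify the initial data at scale $\eta$, obtain smooth solutions, control the $H^3$ difference of two such regularized solutions by their $H^2$ difference via a logarithmic interpolation, and let $\eta\to 0$ to recover strong continuity at the top level. All other ingredients are routine adaptations of the classical Friedrichs--Kato theory to our specific nonlinearity and to the smooth inhomogeneous term $u\cdot\nabla V$ produced by the exterior potential.
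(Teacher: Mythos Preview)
Your proposal is correct and follows precisely the standard Friedrichs--Kato/contraction-mapping approach that the paper explicitly invokes but does not write out (the paper states the result without proof, referring to \cite{STW2003} for a related argument). Your sketch is more detailed than what the paper provides, but the underlying strategy is the same.
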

	This local result allows us to carry out \emph{a priori} estimates on the solution and extend the existence interval to arbitrary times under smallness or structural conditions.
	%
	%
	%
	%
	%
	%
\subsection{Auxiliary lemmas} We now introduce several technical lemmas that will play a crucial role in our subsequent analysis.

To handle the nonlinear terms in the energy estimates, especially those involving products of high-order derivatives, we use the following well-known commutator bound.
	\begin{lemma}{\rm (\cite[Lemma A.3]{MR2917409})}\label{lema2}
		Let $m\geq 1$ be an integer and define the communicator 
		\begin{equation*}
			[\nabla^m,f]g=\nabla^{m}(fg)-f\nabla^m g.
		\end{equation*}
		Then we have
		\begin{equation*}
			\big\|[\nabla^m,f]g\big\|_{L^p}\lesssim \|\nabla f\|_{L^{p_1}}\|\nabla^{m-1}g\|_{L^{p_2}}+
			\|\nabla^m f\|_{L^{p_3}}\|g\|_{L^{p_4}},
		\end{equation*}
for $p, p_2, p_3\in(1,+\infty)$ with
		\begin{equation*}
			\frac{1}{p}=\frac{1}{p_1}+\frac{1}{p_2}=\frac{1}{p_3}+\frac{1}{p_4}.
		\end{equation*}
	\end{lemma}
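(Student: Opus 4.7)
The plan is to reduce everything to the Leibniz expansion
\[
\nabla^m(fg) \;=\; \sum_{k=0}^{m} \binom{m}{k} \nabla^k f\,\nabla^{m-k}g,
\]
so that the $k=0$ term cancels $f\nabla^m g$ and
\[
[\nabla^m,f]g \;=\; \sum_{k=1}^{m}\binom{m}{k}\,\nabla^k f\,\nabla^{m-k}g.
\]
It then suffices to bound each bilinear summand by the right-hand side of the claimed inequality. The two endpoint indices are immediate: for $k=1$, H\"older's inequality with $1/p=1/p_1+1/p_2$ gives $\|\nabla f\,\nabla^{m-1}g\|_{L^p}\le \|\nabla f\|_{L^{p_1}}\|\nabla^{m-1}g\|_{L^{p_2}}$, and for $k=m$ the choice $1/p=1/p_3+1/p_4$ controls $\|\nabla^m f\,g\|_{L^p}$ by $\|\nabla^m f\|_{L^{p_3}}\|g\|_{L^{p_4}}$.

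For an intermediate index $2\le k\le m-1$ (relevant only when $m\ge 3$), I would introduce interpolation exponents $q_k,r_k\in(1,\infty)$ defined by
\[
\frac{1}{q_k}:=\frac{m-k}{(m-1)p_1}+\frac{k-1}{(m-1)p_3},\qquad
\frac{1}{r_k}:=\frac{m-k}{(m-1)p_2}+\frac{k-1}{(m-1)p_4}.
\]
The assumption $1/p=1/p_1+1/p_2=1/p_3+1/p_4$ immediately gives $1/q_k+1/r_k=1/p$, so H\"older yields
\[
\|\nabla^k f\,\nabla^{m-k}g\|_{L^p} \;\le\; \|\nabla^k f\|_{L^{q_k}}\,\|\nabla^{m-k}g\|_{L^{r_k}}.
\]
The two factors are then handled by Gagliardo--Nirenberg interpolation between the orders $1$ and $m$ for $f$, and between orders $0$ and $m-1$ for $g$: the choice of $q_k,r_k$ is precisely what is needed to satisfy the GN scaling relations, yielding
\[
\|\nabla^k f\|_{L^{q_k}}\lesssim \|\nabla f\|_{L^{p_1}}^{\theta_k}\|\nabla^m f\|_{L^{p_3}}^{1-\theta_k},\quad \|\nabla^{m-k} g\|_{L^{r_k}}\lesssim \|\nabla^{m-1}g\|_{L^{p_2}}^{\theta_k}\|g\|_{L^{p_4}}^{1-\theta_k},
\]
with the common weight $\theta_k=(m-k)/(m-1)\in(0,1)$. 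Multiplying these bounds and applying Young's inequality $A^{\theta_k}B^{1-\theta_k}\le \theta_k A+(1-\theta_k)B$ to the products of the $f$- and $g$-factors produces a linear combination of the two endpoint products $\|\nabla f\|_{L^{p_1}}\|\nabla^{m-1}g\|_{L^{p_2}}$ and $\|\nabla^m f\|_{L^{p_3}}\|g\|_{L^{p_4}}$, which is exactly the right-hand side of the lemma.

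The main technical point I anticipate is verifying that the four auxiliary exponents $q_k$, $r_k$ together with the prescribed $p_i$ lie in the admissible range of the Gagliardo--Nirenberg inequality on $\mathbb{R}^3$, that is, that the convex-combination and scaling identities
\[
\frac{3}{q_k}-k \;=\; \theta_k\!\left(\frac{3}{p_1}-1\right)+(1-\theta_k)\!\left(\frac{3}{p_3}-m\right),
\]
and its analogue for $r_k$, are satisfied with $\theta_k=(m-k)/(m-1)$; this is a direct computation from the definitions above and is precisely what forces that particular choice of $q_k,r_k$. The case $m=1$ is trivial since only the summand $k=1$ appears and the first term on the right-hand side already dominates it, and the case $m=2$ contains only the two endpoint terms, so no interpolation is needed there.
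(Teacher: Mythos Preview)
Your argument is correct and is the standard route to this Kato--Ponce type commutator bound: expand by Leibniz, treat the two endpoint terms by H\"older, and interpolate the intermediate terms via Gagliardo--Nirenberg with exponents chosen so that Young's inequality collapses everything onto the two endpoint products. The scaling identities you wrote down for $q_k,r_k$ and $\theta_k=(m-k)/(m-1)$ check out, and the only mild caveat is that $p_1$ or $p_4$ may equal $\infty$ (the lemma only constrains $p,p_2,p_3$), but Gagliardo--Nirenberg accommodates $L^\infty$ endpoints and your argument goes through unchanged.

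As for comparison with the paper: there is nothing to compare. The paper does not prove this lemma at all; it merely quotes it from \cite[Lemma A.3]{MR2917409} and uses it as a black box in the subsequent energy estimates. Your write-up thus supplies a self-contained proof that the paper omits.
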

	
In the analysis of finite-time breakdown of smoothness of solutions based on energy functionals, we encounter second-order differential inequalities. The next lemma provides an explicit bound for such cases.	
	
	\begin{lemma}{\rm (\cite[Lemma 3.7]{CJ25})}\label{lem2}
		Let $h = h(t)$ be a nonnegative $\mathcal{C}^2$-function satisfying  
		\begin{align*}
			h''(t)+c_1h'(t)\leq c_2h(t)+c_3,\quad h(0)=h_0,\quad h'(0)=h_0'
		\end{align*}
		for some $c_i > 0, i = 1,2$ and $c_3\in\mathbb{R}$. Then we have
		\begin{align*}
			h(t)&\leq\left(h_0+\frac{c_3}{\beta(\beta + c_1)}+\frac{1}{c_1 + 2\beta}\left(h_0'-\beta h_0-\frac{c_3}{\beta + c_1}\right)\right)e^{\beta t}\\
			&\quad -\frac{1}{c_1 + 2\beta}\left(h_0'-\beta h_0-\frac{c_3}{\beta + c_1}\right)e^{-(c_1+\beta)t}-\frac{c_3}{\beta(c_1 + \beta)},
		\end{align*}
		where $\beta>0$ is given by
		\begin{align*}
			\beta:=\frac{-c_1+\sqrt{c_1^2 + 4c_2}}{2}.
		\end{align*}
	\end{lemma}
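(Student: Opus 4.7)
The plan is to prove this via the classical comparison principle for second-order linear ODEs. First I would consider the associated inhomogeneous linear equation obtained by replacing the inequality with equality, namely
\[
H''(t)+c_1 H'(t)-c_2 H(t)=c_3,\qquad H(0)=h_0,\quad H'(0)=h_0'.
\]
The characteristic polynomial $\lambda^2+c_1\lambda-c_2=0$ has roots $\lambda_+=\beta>0$ and $\lambda_-=-(c_1+\beta)<0$, and since $\beta(c_1+\beta)=c_2$, a particular solution is given by the constant $-c_3/(\beta(c_1+\beta))$. Hence the general solution has the form
\[
H(t)=A e^{\beta t}+B e^{-(c_1+\beta)t}-\frac{c_3}{\beta(c_1+\beta)}.
\]
Solving the $2\times 2$ linear system imposed by the initial data $H(0)=h_0$, $H'(0)=h_0'$ for $(A,B)$ yields exactly the explicit expression in the statement; this amounts to a direct but slightly tedious algebraic verification which I would present as the first step.

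The second step is the comparison argument showing $h(t)\le H(t)$. Setting $w(t):=h(t)-H(t)$ and subtracting the equality from the inequality gives
\[
w''(t)+c_1 w'(t)-c_2 w(t)\le 0,\qquad w(0)=0,\quad w'(0)=0.
\]
Since the differential operator factorizes as $(D-\beta)(D+(c_1+\beta))$ with $D=d/dt$, it is natural to introduce the auxiliary quantity $v(t):=w'(t)+(c_1+\beta)w(t)$, which satisfies $v'(t)-\beta v(t)\le 0$ with $v(0)=0$. Multiplying by the integrating factor $e^{-\beta t}$ shows that $e^{-\beta t}v(t)$ is nonincreasing, so $v(t)\le 0$ for all $t\ge 0$. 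Applying the same integrating-factor argument with $e^{(c_1+\beta)t}$ to the first-order inequality $w'(t)+(c_1+\beta)w(t)\le 0$, $w(0)=0$, then yields $w(t)\le 0$, i.e.\ $h(t)\le H(t)$ on $[0,\infty)$, which is the stated bound.

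There is no substantial obstacle here, since everything reduces to standard linear ODE theory; the only point requiring care is that the factorization must be applied in the correct order so that each reduction produces a first-order inequality with a homogeneous right-hand side of the favorable sign, which is guaranteed by $\lambda_+>0>\lambda_-$. The nonnegativity assumption $h\ge 0$ is not used in the derivation of the upper bound itself — it is recorded in the hypothesis only because the lemma will later be applied to functionals that are manifestly positive, so that the upper bound yields meaningful control. An alternative one-step proof via a Lyapunov functional combining $h$ and $h'$ is possible, but the two-step factorization is cleaner and makes the structural role of $\beta$ as the unique positive characteristic root transparent.
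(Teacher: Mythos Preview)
Your argument is correct. The paper does not prove this lemma itself but cites it from \cite[Lemma 3.7]{CJ25}, so there is no in-paper proof to compare against; your explicit-solution-plus-comparison approach via the factorization $(D-\beta)(D+(c_1+\beta))$ is the natural one and recovers the stated bound exactly.
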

	
To interpolate between low- and high-order norms, we will use the following fractional Gagliardo--Nirenberg type estimate.	
	
		\begin{lemma}\label{lema5}
		Let $a\geq 0$ and integer $l \geq 0$, then we have
		\begin{equation*}
			\|\nabla^l f\|_{L^2}\lesssim \|\nabla^{l+1}f\|_{L^2}^{1-\theta}\|\Lambda^{-a}f\|_{L^2}^\theta\quad \text{with}\quad \theta=\frac{1}{1+l+a}.
		\end{equation*}
	\end{lemma}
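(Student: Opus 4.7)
The plan is to prove this fractional Gagliardo--Nirenberg interpolation inequality by a direct Fourier-analytic argument, since $\nabla^l$, $\nabla^{l+1}$, and $\Lambda^{-a}$ all act as Fourier multipliers of the type $|\xi|^s$, which makes Plancherel's theorem the natural tool.

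First, I would rewrite each of the three $L^2$ norms via Plancherel as integrals of $|\xi|^{2s}|\hat f(\xi)|^2$ for the appropriate exponent $s$, so that the target inequality becomes
\[
\int_{\mathbb{R}^3}|\xi|^{2l}|\hat f(\xi)|^2\,d\xi \;\lesssim\; \left(\int_{\mathbb{R}^3}|\xi|^{2(l+1)}|\hat f(\xi)|^2\,d\xi\right)^{1-\theta}\left(\int_{\mathbb{R}^3}|\xi|^{-2a}|\hat f(\xi)|^2\,d\xi\right)^{\theta}.
\]
The key algebraic observation is the pointwise factorization
\[
|\xi|^{2l}|\hat f|^2 = \bigl(|\xi|^{2(l+1)}|\hat f|^2\bigr)^{1-\theta}\bigl(|\xi|^{-2a}|\hat f|^2\bigr)^{\theta},
\]
which is valid provided $2l=2(l+1)(1-\theta)-2a\theta$, and this is precisely the choice $\theta=\tfrac{1}{1+l+a}$ appearing in the statement. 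So the exponent in the lemma is not arbitrary: it is forced by a scaling/homogeneity balance in frequency.

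With this factorization in hand, I would apply Hölder's inequality in $\xi$ with conjugate exponents $\tfrac{1}{1-\theta}$ and $\tfrac{1}{\theta}$ (both are admissible since $0<\theta<1$ because $l\geq 0$ and $a\geq 0$ imply $1+l+a\geq 1$), which yields exactly the claimed inequality after taking a square root. No genuine obstacle arises: one only has to check that $\||\xi|^{-a}\hat f\|_{L^2}$ and $\||\xi|^{l+1}\hat f\|_{L^2}$ are finite a priori, otherwise the right-hand side is infinite and the bound is trivial. I would briefly note the borderline case $\theta=1$ (i.e.\ $l=a=0$), where the inequality degenerates to an identity, and the case where $\hat f$ may fail to be locally integrable near the origin when $a>0$, which is handled by interpreting $\Lambda^{-a}f$ in the standard tempered-distribution sense of Riesz potentials so that $\|\Lambda^{-a}f\|_{L^2}=\||\xi|^{-a}\hat f\|_{L^2}$ remains meaningful. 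This completes the proof.
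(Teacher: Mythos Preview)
Your proposal is correct and follows essentially the same approach as the paper: Plancherel's identity to rewrite the $L^2$ norms on the Fourier side, followed by H\"older's inequality with exponents $\tfrac{1}{1-\theta}$ and $\tfrac{1}{\theta}$ applied to the pointwise factorization $|\xi|^{2l}|\hat f|^2 = (|\xi|^{2(l+1)}|\hat f|^2)^{1-\theta}(|\xi|^{-2a}|\hat f|^2)^{\theta}$. The paper's proof is much terser (one line), whereas you spell out the scaling identity that determines $\theta$ and address the borderline $\theta=1$ case; both arguments are otherwise identical.
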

	\begin{proof}		
		By Parseval's identity and H\"{o}lder's inequality, we easily find
		\begin{equation*}
			\|\nabla^l f\|_{L^2}=\Big\|(i\xi)^l\hat{f}\Big\|_{L^2}
			\lesssim\big\|(i\xi)^{l+1}\hat{f}\big\|_{L^2}^{1-\theta}\big\||\xi|^{-a}\hat{f}\big\|_{L^2}^\theta=\|\nabla^{l+1}f\|_{L^2}^{1-\theta}\|\Lambda^{-a}f\|_{L^2}^\theta,
		\end{equation*}
		for $\theta=\frac{1}{1+l+a}$. This completes the proof.
	\end{proof}
	
For the decay analysis, we decompose functions into low- and high-frequency components. Define the operators $\mathcal{K}_{1}$ and $\mathcal{K}_{\infty}$ on $L^{2}$ as
	\begin{equation}\label{THZ5}
		\mathcal{K}_{1}f=f^\ell :=\mathcal{F}^{- 1}\big(\hat\chi_{1}(\xi)\mathcal{F}[f](\xi)\big) \quad \mbox{and} \quad	\mathcal{K}_{\infty}f=f^h :=\mathcal{F}^{- 1}\big(\hat\chi_{\infty}(\xi)\mathcal{F}[f](\xi)\big),
	\end{equation}
	respectively, where $\hat{\chi}_{j}(\xi)(j=1,\infty)\in C^{\infty}(\mathbb{R}^{3})$, $0\leq \hat\chi_{j}\leq 1$ are smooth cut-off functions satisfying
	\begin{equation*}
		\hat\chi_{1}(\xi)=\left\{
		\begin{array}{l}
			1\quad (|\xi|\leq r_{0})\\
			0\quad (|\xi|\geq R_0)
		\end{array}
		\right.
		\quad \mbox{and} \quad \hat\chi_{\infty}(\xi)=1-\hat\chi_{1}(\xi),
	\end{equation*}
with sufficiently small $r_0>0$ and large $R_0>0$. This decomposition allows us to separate the diffusive low-frequency modes from the dissipative high-frequency modes.
	\begin{lemma}{\rm (\cite[Lemma 2.4]{huang2024global})}\label{lema6}
		Let $m > n \geq 0$. Then for $f\in H^m$, there exists a constant $C > 0$ such that 
		\begin{equation*}
			\|\nabla^{m} f^\ell\|_{L^2}\leq C\|\nabla^{n} f^\ell\|_{L^2},\quad \|\nabla^{n}f^h\|_{L^2}\leq C\|\nabla^{m}f^h\|_{L^2},
		\end{equation*}
		and 
		\begin{equation*}
			\|\nabla^{n} f^\ell\|_{L^2}\leq C\|\nabla^{n} f\|_{L^2},\quad\|\nabla^{n}f^h\|_{L^2}\leq C\|\nabla^{n}f\|_{L^2}.
		\end{equation*}
	\end{lemma}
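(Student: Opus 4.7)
The plan is to prove all four inequalities via Plancherel's theorem, leveraging the frequency localization encoded in the cutoff functions $\hat\chi_1$ and $\hat\chi_\infty$ from \eqref{THZ5}. The statement is really a pair of Bernstein-type gain estimates together with two trivial projection bounds, so no deep machinery such as Littlewood--Paley calculus or interpolation is required; everything reduces to pointwise control of Fourier multipliers on the explicit supports of the cutoffs.

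First, I would address the low-frequency gain $\|\nabla^m f^\ell\|_{L^2}\leq C\|\nabla^n f^\ell\|_{L^2}$. Since $\hat\chi_1$ vanishes outside $\{|\xi|\leq R_0\}$, Plancherel gives
\begin{equation*}
\|\nabla^m f^\ell\|_{L^2}^2 = \int_{\mathbb{R}^3} |\xi|^{2m}|\hat\chi_1(\xi)|^2|\hat f(\xi)|^2\,d\xi,
\end{equation*}
and on the support of $\hat\chi_1$ the factor $|\xi|^{2(m-n)}$ is bounded by $R_0^{2(m-n)}$. Factoring this constant out and recognizing the remaining integral as $\|\nabla^n f^\ell\|_{L^2}^2$ yields the desired bound with $C=R_0^{m-n}$.

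Next, I would handle the high-frequency gain $\|\nabla^n f^h\|_{L^2}\leq C\|\nabla^m f^h\|_{L^2}$ symmetrically. Since $\hat\chi_\infty=1-\hat\chi_1$ vanishes on $\{|\xi|<r_0\}$, the same Plancherel argument gives
\begin{equation*}
\|\nabla^n f^h\|_{L^2}^2 \leq r_0^{-2(m-n)}\int_{\mathbb{R}^3}|\xi|^{2m}|\hat\chi_\infty(\xi)|^2|\hat f(\xi)|^2\,d\xi = r_0^{-2(m-n)}\|\nabla^m f^h\|_{L^2}^2,
\end{equation*}
using $|\xi|^{-2(m-n)}\leq r_0^{-2(m-n)}$ on the support of $\hat\chi_\infty$.

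Finally, the two projection inequalities $\|\nabla^n f^\ell\|_{L^2}\leq C\|\nabla^n f\|_{L^2}$ and $\|\nabla^n f^h\|_{L^2}\leq C\|\nabla^n f\|_{L^2}$ are immediate from the pointwise bound $0\leq \hat\chi_1,\hat\chi_\infty\leq 1$ combined with Plancherel. There is no genuine obstacle in this argument; the constants $R_0$ and $r_0$ determine $C$ explicitly, and the only subtlety worth noting is that the smoothness of the cutoffs plays no role in the statement itself, only in their later use as Fourier multipliers commuting with derivatives.
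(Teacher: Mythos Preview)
Your proof is correct. The paper does not supply its own proof of this lemma; it simply cites \cite[Lemma 2.4]{huang2024global}, so there is nothing to compare against beyond noting that your Plancherel-based Bernstein argument is exactly the standard route and would be what one finds in the cited reference.
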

	This frequency splitting, together with the above interpolation and commutator estimates, forms the core analytic framework for deriving uniform energy bounds and sharp decay rates in later sections.


\section{Global well-posedness with large damping for arbitrarily large initial data}\label{Sec:thm1}

In this section, we establish the global-in-time well-posedness of strong solutions to the damped isothermal Euler system with an exterior potential in the regime of large damping coefficients. A notable feature of our result, as stated in Theorem \ref{thm1}, is that {\it no smallness assumption on the initial data is required}: strong damping effectively suppresses nonlinear instabilities and ensures global regularity even for large initial perturbations. Our approach quantifies this stabilization mechanism by constructing an appropriately scaled energy functional that captures the underlying dissipative structure of the system and closes a bootstrap argument.

We begin by rewriting the Euler system with damping in the momentum formulation:
\begin{equation}\label{MF1}
	\left\{
	\begin{aligned}	
		&\partial_t\rho+\textrm{div}m=0, \quad m= \rho u,\\
		&\partial_tm+\nabla \rho+\gamma m+\rho\nabla V=-\text{div}\Big(\frac{m\otimes m}{\rho}\Big),
	\end{aligned}
	\right.
\end{equation}
with initial data 
\begin{align*}
	(\rho, m)(x,0)=(\rho_0,m_0), \quad m_0=\rho_0u_0.	
\end{align*}

To control the solution uniformly in time, we introduce the energy functional
\begin{align*}
	Z_1(t)&:=\gamma^{-\frac{1}{2}}\|(\rho-\rho_\infty, m, u)(t)\|_{H^3} +\left(\int_0^t\|(m,u)(s)\|_{H^3}^2 \,ds\right)^{\frac{1}{2}},\\
	Z_2(t)&:=\left(\gamma^{-1}\int_0^t \|\nabla(\rho-\rho_\infty)(s)\|_{H^2}^2\,ds\right)^{\frac{1}{2}},
\end{align*}
which combines the instantaneous Sobolev norms with time-integrated dissipation. The prefactor $\gamma^{-1/2}$ reflects the scaling balance between the large damping term and the convective nonlinearities, ensuring that the energy remains small for large $\gamma$.



\subsection{Reformulation and bootstrap setting}
To carry out the global existence proof, we assume the following \emph{a priori} bound for some small constant $\delta>0$, a bounded constant $\mathcal{R}_0$ (to be determined the later) such that 
\begin{equation}\label{a-priori}
	Z_1(t)\leq \delta, \quad 	Z_2(t)\leq \mathcal{R}_0.
\end{equation}
This assumption implies the estimates
\begin{align}\label{030607}
\begin{aligned}
&\|(\rho-\rho_\infty,m,u)(t)\|_{H^3}\leq \gamma^{\frac{1}{2}}\delta, \quad \int_0^t\|(m,u)(s)\|_{H^3}^2ds\leq \delta^2, \cr
&\qquad  \int_0^t\|\nabla(\rho-\rho_\infty)(s)\|_{H^2}^2ds\leq \gamma \mathcal{R}_0^2,
\end{aligned}
\end{align}
which will be repeatedly used to control nonlinear contributions. The bootstrap argument then proceeds by improving this bound and closing it for sufficiently small $\delta$ and large $\gamma>0$, thus extending the local solution globally.

For later use, we also introduce the shorthand notation for the initial data size:
\[ 
 B_0:=\|(\phi_0, u_0)\|_{H^3}. 
\] 
 Using the definition of momentum and the smallness of $\epsilon_0$, we have
\begin{align}\label{030602}
\begin{aligned}
	\|m_0\|_{H^3}&\leq \|(\rho_0-\rho_\infty)u_0\|_{H^3}+\|\rho_\infty u_0\|_{H^3}\\
	&\leq C\|\rho_0-\rho_\infty\|_{H^3}\|u_0\|_{H^3}+C\|\rho_\infty\|_{L^\infty}\|u_0\|_{H^3}+\|\nabla\rho_{\infty}\|_{H^2}\|u_0\|_{H^3}\\
	&\leq C\|(e^{\phi_0}-1)\rho_\infty\|_{H^3}\|u_0\|_{H^3}+C\|u_0\|_{H^3}+C\|\nabla V\|_{H^3}\|u_0\|_{H^3}\\
		&\leq C\|\phi_0\|_{H^3}\|\rho_\infty\|_{L^\infty}\|u_0\|_{H^3}+C\|\phi_0\|_{H^3}\|\nabla\rho_\infty\|_{H^2}\|u_0\|_{H^3}\cr
		&\quad +C\|u_0\|_{H^3}+C\|\nabla V\|_{H^3}\|u_0\|_{H^3}\\
		&\leq C\|\phi_0\|_{H^3}\|u_0\|_{H^3}+C\|\phi_0\|_{H^3}\|\nabla V\|_{H^3}\|u_0\|_{H^3}\cr
		&\quad +C\|u_0\|_{H^3}+C\|\nabla V\|_{H^3}\|u_0\|_{H^3}\\
	&\leq C(B_0^2+\epsilon_0 B_0^2+B_0+\epsilon_0B_0)\\
	&\leq C(B_0^2+B_0),
	\end{aligned}
\end{align}
Note that $\rho_\infty = e^{-V}$ is not in $L^2(\R^3)$, so the above bounds rely on its $L^\infty$-control and Sobolev regularity of $V$. This shows that the initial momentum is controlled by the size of the density and velocity perturbations.

Rewriting the system for the perturbation variables $(\rho-\rho_\infty,m)$ yields
\begin{equation}\label{MF}
	\left\{
	\begin{aligned}	
		&\partial_t(\rho-\rho_\infty)+\textrm{div}m=0,\\
		&\partial_tm+\nabla (\rho-\rho_\infty) +\gamma m+(\rho-\rho_\infty) \nabla V=-\text{div}\Big(\frac{m\otimes m}{\rho}\Big),
	\end{aligned}
	\right.
\end{equation}
with initial data 
\[
	(\rho-\rho_\infty,m)(x,0)=(\rho_0-\rho_\infty,m_0),\quad m_0=\rho_0u_0.	
\]

%
%
%
%
%
%
%
%
%
%
%
\subsection{Uniform density bounds}
An essential step in the global analysis is to ensure that the density remains uniformly bounded away from vacuum and infinity. This prevents degeneracy in the nonlinear terms involving $1/\rho$ and allows the Sobolev estimates for $m/\rho$ to remain valid. To this end, we first study the parabolic problem satisfied by a comparison function $\rho_*$:
\begin{lemma}\label{lemM}
 Let $\rho_{*}(x,t)>0$ be a solution to the following equation:
	\begin{equation}\label{031401}
		\left\{
		\begin{aligned}
			&\partial_t\rho_*-\gamma^{-1}\Delta\rho_*-\gamma^{-1}{\rm{div}}(\rho_*\nabla V)=0,\\
			& \rho_*(x,0)=\rho_0,
		\end{aligned}
		\right.
	\end{equation}
for $0<t\leq T$, with $\rho_0-\rho_\infty \in H^3(\mathbb{R}^3)$, $\|\nabla V\|_{H^3}\leq \epsilon_0$, and $0<\rho_\infty=e^{-V}\leq 1$. If the damping coefficient $\gamma$ is sufficiently large, $\epsilon_0$ is small enough,  and the initial density $\rho_0$ satisfies $\rho_1\leq \rho_0\leq \rho_2$  for some positive constants $\rho_1,\rho_2$, then for all $0<t\le T$,
	\begin{align*}
		\rho_1\leq \rho_*\leq \rho_2.	
	\end{align*}
\end{lemma}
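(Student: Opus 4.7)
The plan is to reduce \eqref{031401} to a pure drift-diffusion equation by dividing through by the stationary profile $\rho_\infty = e^{-V}$, and then invoke the classical parabolic maximum principle. The key structural observation is that $\rho_\infty$ is itself a stationary solution of \eqref{031401}, since $\nabla \rho_\infty + \rho_\infty \nabla V \equiv 0$, so the flux can be rewritten as $\nabla \rho_* + \rho_* \nabla V = \rho_\infty \nabla(\rho_*/\rho_\infty)$. Setting $w := \rho_*/\rho_\infty$, equation \eqref{031401} takes the self-adjoint form $\rho_\infty \partial_t w = \gamma^{-1}\operatorname{div}(\rho_\infty \nabla w)$, which upon expanding and dividing by $\rho_\infty > 0$ reduces to the scalar drift-diffusion equation
\[
\partial_t w - \gamma^{-1}\Delta w + \gamma^{-1}\nabla V \cdot \nabla w = 0, \qquad w(\cdot,0) = \rho_0/\rho_\infty.
\]
Crucially, all the $\Delta V$- and $|\nabla V|^2$-induced zeroth-order sources cancel, so no multiplicative amplification is introduced into the evolution of $w$.

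I would then apply the parabolic weak maximum and minimum principles on $\mathbb{R}^3 \times [0,T]$ to $w$. The drift coefficient $\gamma^{-1}\nabla V$ is smooth and bounded via Sobolev embedding from $\|\nabla V\|_{H^3} \leq \epsilon_0$; the initial perturbation $w(\cdot,0) - 1 = (\rho_0 - \rho_\infty)/\rho_\infty$ belongs to $H^3(\mathbb{R}^3)$, since $\rho_0 - \rho_\infty \in H^3$ and $\rho_\infty$ is bounded above and below with bounds close to $1$ when $\epsilon_0$ is small. Consequently $w(x,t) - 1 \to 0$ as $|x| \to \infty$ uniformly in $t \in [0,T]$, and a standard truncation argument on balls $B_R$ (letting $R \to \infty$) yields
\[
\inf_{x \in \mathbb{R}^3} w(x,0) \leq w(x,t) \leq \sup_{x \in \mathbb{R}^3} w(x,0), \qquad (x,t) \in \mathbb{R}^3 \times [0,T].
\]

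It remains to translate these bounds back to $\rho_* = \rho_\infty w$. Using $V \geq 0$, one has $\inf w(\cdot,0) = \inf(\rho_0 e^V) \geq \rho_1$ and $\sup w(\cdot,0) \leq \rho_2 e^{\|V\|_{L^\infty}}$; multiplying by $\rho_\infty \in [e^{-\|V\|_{L^\infty}}, 1]$ then gives $\rho_1 e^{-\|V\|_{L^\infty}} \leq \rho_*(x,t) \leq \rho_2 e^{\|V\|_{L^\infty}}$. The smallness assumption $\|\nabla V\|_{H^3} \leq \epsilon_0$ together with the far-field condition $V(x) \to 0$ as $|x| \to \infty$ furnishes $\|V\|_{L^\infty} \leq C \epsilon_0$, so the factors $e^{\pm \|V\|_{L^\infty}} = 1 + O(\epsilon_0)$ can be absorbed by slightly shrinking $\rho_1$ and enlarging $\rho_2$ by $O(\epsilon_0)$ amounts, yielding the claimed bound $\rho_1 \leq \rho_*(x,t) \leq \rho_2$.

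The main obstacle will be the rigorous justification of the maximum principle on the unbounded domain $\mathbb{R}^3$: although the ambient value $1$ of $w$ is nonzero, the fluctuation $w - 1$ belongs to $H^3$ and therefore decays at spatial infinity, which rules out escape of the extremum to infinity and allows the standard bounded-domain argument to pass to the limit. The largeness of $\gamma$ plays no substantive role here (it merely rescales the diffusion time scale) and is retained only for consistency with the ambient hypotheses used in the subsequent comparison argument of Proposition \ref{A2}, where $\rho_*$ is coupled back to the full nonlinear density $\rho$.
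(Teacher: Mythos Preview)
Your approach is correct and takes a genuinely different route from the paper. The paper argues directly on $\rho_*$: it sets $q=\rho_*-\varepsilon t$, expands $\operatorname{div}(\rho_*\nabla V)=\nabla\rho_*\cdot\nabla V+\rho_*\Delta V$, and is left with the zeroth-order source $\gamma^{-1}\rho_*\Delta V$. To control that term, the paper first runs an $H^3$ energy estimate on $\rho_*-\rho_\infty$ to extract a preliminary $L^\infty$ bound $\rho_*\le\tilde C_0$, and then uses both $\gamma$ large and $\epsilon_0$ small so that $C\gamma^{-1}\tilde C_0\epsilon_0<\varepsilon/2$, after which the strict parabolic maximum principle applies to $q$. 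Your substitution $w=\rho_*/\rho_\infty$ exploits that $\rho_\infty=e^{-V}$ is itself stationary and kills the zeroth-order term altogether, reducing to a pure drift--diffusion equation to which the maximum principle applies directly. This is cleaner: no preliminary energy estimate is needed, and the largeness of $\gamma$ plays no role in your argument, as you correctly observe.

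One small caveat: your argument delivers $\rho_1 e^{-\|V\|_{L^\infty}}\le\rho_*\le\rho_2 e^{\|V\|_{L^\infty}}$, not literally $\rho_1\le\rho_*\le\rho_2$; the assertion that the $e^{\pm C\epsilon_0}$ factors ``can be absorbed'' into $\rho_1,\rho_2$ does not recover the exact constants in the lemma as stated. This is harmless for the only downstream application (Proposition~\ref{A2} needs merely $\tfrac12\rho_1\le\rho\le\tfrac32\rho_2$), so the weakened bound is entirely adequate, but you should state it as such rather than claim the sharp inequality.
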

\begin{remark}
This comparison lemma ensures that the parabolic relaxation of the density preserves its initial lower and upper bounds for large damping. In particular, it guarantees that $\rho$ remains strictly positive, which is essential for controlling the nonlinear fluxes in \eqref{MF}.
\end{remark}
\begin{proof}[Proof of Lemma \ref{lemM}]
The proof combines an $H^3$ energy estimate for the parabolic problem \eqref{031401} with a maximum principle argument. We divide the proof into two parts: the upper bound and the lower bound for $\rho_*$.

\medskip
\noindent \emph{Step 1: Uniform $H^3$ control and preliminary bound.} Defining $q:=\rho_*-\varepsilon t$, we find from \eqref{031401} that $q$ satisfies
\[
		\partial_tq-\gamma^{-1}\Delta q-\gamma^{-1}\nabla q\cdot\nabla V=\gamma^{-1}\rho_*\Delta V-\varepsilon.	
\]
Noticing $\rho_\infty=e^{-V}$, we obtain
	\begin{equation}\label{H1}
		\left\{
		\begin{aligned}
			&\partial_t(\rho_*-\rho_\infty)-\gamma^{-1}\Delta(\rho_*-\rho_\infty)-\gamma^{-1}\text{div}((\rho_*-\rho_\infty)\nabla V)=0,\\
			& (\rho_*-\rho_\infty)(x,0)=\rho_0-\rho_\infty.	
		\end{aligned}
		\right.
	\end{equation}
Then a direct computation yields
	\begin{align*}
		&\frac{1}{2}\frac{d}{dt}\|\rho_*-\rho_\infty\|_{H^3}^2+\gamma^{-1}\|\nabla(\rho_*-\rho_\infty)\|_{H^3}^2\\
		&\quad =\gamma^{-1}\sum_{k=0}^3\int_{\mathbb{R}^3}\nabla^k\text{div}((\rho_*-\rho_\infty)\nabla V)\cdot \nabla^k(\rho_*-\rho_\infty)dx\\
		&\quad \leq C\gamma^{-1}\|\nabla V\|_{H^3}\|\nabla(\rho_*-\rho_\infty)\|_{H^3}^2\\
		&\quad \leq C\epsilon_0\gamma^{-1}\|\nabla(\rho_*-\rho_\infty)\|_{H^3}^2.
	\end{align*}
		Due to the smallness of $\epsilon_0$,  the last term can be absorbed by the dissipation, giving
	\begin{align*}
		\frac{d}{dt}\|\rho_*-\rho_\infty\|_{H^3}^2+\gamma^{-1}\|\nabla(\rho_*-\rho_\infty)\|_{H^3}^2\leq 0,
	\end{align*}
	which implies that 
	\begin{align*}
		&\|(\rho_*-\rho_\infty)(t)\|_{H^3}^2+\gamma^{-1}\int_0^t\|\nabla(\rho_*-\rho_\infty)(s)\|_{H^3}^2ds\\
		&\quad \leq C\|\rho_0-\rho_\infty\|_{H^3}^2\\
		&\quad \leq C \|(e^{\phi_0}-1)\rho_\infty\|_{H^3}\\
		&\quad\leq C\|\phi_0\|_{H^3}\|\rho_\infty\|_{L^\infty}+C\|\phi_0\|_{H^3}\|\nabla \rho_\infty\|_{H^2}\\
		&\quad\leq C(B_0+\epsilon_0 B_0)\\
		&\quad\leq CB_0.
\end{align*}
	By Sobolev embedding, we prove that there exists a positive constant $\tilde{C}_0$ such that 
		\begin{align*}
			\rho_*\leq \|\rho_*-\rho_\infty\|_{L^\infty}+\rho_\infty \leq  C\|\rho_*-\rho_\infty\|_{H^3}+\rho_\infty\leq CB_0+\rho_\infty\leq \tilde{C}_0.
		\end{align*}

\medskip
\noindent
\emph{Step 2: Upper bound via the maximum principle.} Due to the fact that $\gamma$ is sufficiently large and $\|\nabla V\|_{H^3}\leq \epsilon_0$, we deduce
	\begin{align*}
		\partial_tq-\gamma^{-1}\Delta q-\gamma^{-1}\nabla q\cdot\nabla V
		\leq C\gamma^{-1}\rho_*\|\nabla V\|_{H^3}-\varepsilon\leq C\gamma^{-1}\tilde{C}_0\epsilon_0-\varepsilon\leq -\frac{\varepsilon}{2}<0.
	\end{align*}
If $(x_0,t_0)$ is an interior maximum of $q$ in $\Omega=\{(x,t) : x\in \mathbb{R}^3, 0<t\leq T\}$, then at that point  
	\begin{align*}
		\partial_tq(x_0,t_0)\geq 0, \quad \Delta q(x_0,t_0)\leq 0, \quad \nabla q(x_0,t_0)=0,
	\end{align*}
	which implies that 
	\begin{align*}
		\partial_tq-\gamma^{-1}\Delta q-\gamma^{-1}\nabla q\cdot\nabla V\geq 0.
	\end{align*}
	This contradicts our assumption that $(x_0,t_0)\in \Omega$. Thus the maximum is achieved on the parabolic boundary, implying
	\begin{align*}
		q(x,t)=\rho_*-\varepsilon t\leq \max\{q_0\}\leq\max{\rho_0}\leq \rho_2.
	\end{align*}
Hence, we have
	\begin{align*}
		\rho_*=q+\varepsilon t\leq \rho_2+\varepsilon t\leq \rho_2+\varepsilon T.
	\end{align*}
Letting $\varepsilon\to0$, we obtain $\rho_*\le \rho_2$.

\medskip
\noindent
\emph{Step 3: Lower bound via the maximum principle.} Similarly, define $q_*=\rho_*+\varepsilon t$, which satisfies
\[
		\partial_tq_*-\gamma^{-1}\Delta q_*-\gamma^{-1}\nabla q_*\cdot\nabla V=\gamma^{-1}\rho_*\Delta V+\varepsilon>-C\gamma^{-1}\tilde{C}_0\epsilon_0+\varepsilon> \frac{\varepsilon}{2}>0.
\]
If $(x_0,t_0)$ is an interior minimum of $q_*$, then at that point $\partial_tq_* \le 0$, $\Delta q_* \ge0$, and $\nabla q_*=0$, again leading to a contradiction. Thus the minimum occurs on the boundary, yielding
	\begin{align*}
		q_*(x,t)=\rho_*+\varepsilon t\geq \min\{q_0\}\geq\min{\rho_0}\geq \rho_1,
	\end{align*}
and hence
	\begin{align*}
		\rho_*=q_*-\varepsilon t\geq \rho_1-\varepsilon t\geq \rho_1-\varepsilon T.
	\end{align*}
Letting $\varepsilon\to0$, we obtain $\rho_*\ge\rho_1$.  

\medskip

Combining the two bounds, we conclude that
	\begin{align*}
		\rho_1\leq \rho_*(x,t)\leq \rho_2, \quad \forall\, x \in \R^3, \ 0 < t \leq T,	
	\end{align*}
which completes the proof.
\end{proof}

Having established in Lemma \ref{lemM} that the parabolic comparison function $\rho_*(x,t)$ remains uniformly bounded between two positive constants, our next task is to transfer this property to the actual solution $\rho(x,t)$ of the nonlinear Euler system.  To achieve this, we introduce a carefully chosen auxiliary variable that couples the density and momentum, compare $\rho$ with the parabolic proxy $\rho_*$, and exploit the strong damping to derive a uniform bound.

\begin{proposition}\label{A2}
	Assume that all the conditions of Theorem \ref{thm1} hold. Then for all $(x,t)\in\mathbb{R}^{3}\times [0,T)$, we have
	\begin{align*}
		\frac{1}{2}\rho_1\leq \rho(x,t)\leq \frac{3}{2}\rho_2.	
	\end{align*}
\end{proposition}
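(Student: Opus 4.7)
The plan is to quantify how closely $\rho$ tracks the parabolic proxy $\rho_*$ from Lemma \ref{lemM} by running a perturbative energy estimate. Taking the divergence of the momentum equation in \eqref{MF1} and inserting $\partial_t\rho=-\text{div}\,m$ from the continuity equation yields the effective parabolic identity
\begin{align*}
\partial_t\rho - \gamma^{-1}\Delta\rho - \gamma^{-1}\text{div}(\rho\nabla V) = \gamma^{-1}\partial_t\text{div}\,m + \gamma^{-1}\text{div}\,\text{div}\bigl(m\otimes m/\rho\bigr).
\end{align*}
Subtracting \eqref{031401}, the difference $\eta := \rho - \rho_*$ solves the same parabolic operator on the left with this right-hand side and zero initial datum. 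To eliminate the inconvenient $\partial_t\text{div}\,m$ term, I would pass to the modified variable $\tilde\eta := \eta - \gamma^{-1}\text{div}\,m$, which satisfies
\begin{align*}
\partial_t\tilde\eta - \gamma^{-1}\Delta\tilde\eta - \gamma^{-1}\text{div}(\tilde\eta\,\nabla V) = \gamma^{-2}\Delta\text{div}\,m + \gamma^{-2}\text{div}\bigl(\text{div}\,m\cdot\nabla V\bigr) + \gamma^{-1}\text{div}\,\text{div}\bigl(m\otimes m/\rho\bigr),
\end{align*}
with $\tilde\eta(\cdot,0) = -\gamma^{-1}\text{div}\,m_0$, whose $H^2$ norm is $O(\gamma^{-1})$ by \eqref{030602}.

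Next I would perform an $H^2$ energy estimate on this equation. The potential term on the left is absorbed into the dissipation $\gamma^{-1}\|\nabla\tilde\eta\|_{H^2}^2$ thanks to the smallness of $\|\nabla V\|_{H^3}\le \epsilon_0$. On the right, two integrations by parts move the derivatives onto $\tilde\eta$: the linear sources contribute at most $C\gamma^{-2}\|m\|_{H^3}\|\nabla\tilde\eta\|_{H^2}$ (with an $\epsilon_0$-variant coming from the potential term), while the quadratic flux $\gamma^{-1}\text{div}\,\text{div}(m\otimes m/\rho)$ is handled via the product/commutator bound of Lemma \ref{lema2} together with a provisional pointwise bound $\rho_1/2 \le \rho \le 3\rho_2/2$ assumed on a maximal subinterval through a continuity argument, giving a control of the form $C\gamma^{-1}\|m\|_{H^3}^2\|\nabla\tilde\eta\|_{H^2}$. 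After Young's inequality (to absorb $\|\nabla\tilde\eta\|_{H^2}^2$ into the dissipation) and time integration using the bootstrap bounds \eqref{030607}, one obtains
\begin{align*}
\|\tilde\eta(t)\|_{H^2}^2 \le C(B_0)\gamma^{-2} + C\gamma^{-3}\int_0^t\|m\|_{H^3}^2\,ds + C\gamma^{-1}\delta^2\int_0^t\|m\|_{H^3}^2\,ds \le C(B_0,\delta)\gamma^{-1}.
\end{align*}

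To conclude, I would invoke the Sobolev embedding $H^2(\mathbb{R}^3)\hookrightarrow L^\infty(\mathbb{R}^3)$ together with the identity $\rho - \rho_* = \tilde\eta + \gamma^{-1}\text{div}\,m$ to estimate
\begin{align*}
\|\rho(t) - \rho_*(t)\|_{L^\infty} \le C\|\tilde\eta(t)\|_{H^2} + C\gamma^{-1}\|m(t)\|_{H^3} \le C(B_0,\delta)\gamma^{-1/2},
\end{align*}
which can be made smaller than $\min(\rho_1,\rho_2)/4$ by choosing $\gamma$ sufficiently large depending on $B_0,\rho_1,\rho_2,\delta$. Combined with the uniform bounds $\rho_1\le\rho_*\le\rho_2$ of Lemma \ref{lemM}, the triangle inequality then yields the strict improvement $\rho_1/2\le\rho(x,t)\le 3\rho_2/2$, which closes the continuity assumption and completes the proof. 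The main obstacle in this scheme is the quadratic flux $\text{div}\,\text{div}(m\otimes m/\rho)$, whose factor $1/\rho$ involves the very quantity one is trying to bound; this forces the argument to be set up as a continuity/bootstrap rather than a direct estimate, with the density bound first assumed on a maximal subinterval and then improved by the energy inequality, so that the subinterval is forced to coincide with the full interval of existence.
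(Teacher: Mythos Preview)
Your overall strategy is the same as the paper's---compare $\rho$ to the parabolic proxy $\rho_*$ by subtracting off a $\gamma^{-1}\,\mathrm{div}(\cdot)$ corrector and running an $H^2$ energy estimate---but there is a genuine regularity gap in your version of the corrector.

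The issue is the source term $\gamma^{-2}\Delta\,\mathrm{div}\,m$ in your equation for $\tilde\eta$. At the top level $k=2$ of the $H^2$ estimate you have
\[
\gamma^{-2}\int_{\mathbb{R}^3}\nabla^2\tilde\eta:\nabla^2\Delta\,\mathrm{div}\,m\,dx,
\]
and the parabolic dissipation only supplies $\|\nabla\tilde\eta\|_{H^2}$, i.e.\ at most $\|\nabla^3\tilde\eta\|_{L^2}$. After one integration by parts this pairs $\nabla^3\tilde\eta$ with $\nabla^3\,\mathrm{div}\,m$, which needs $\|m\|_{H^4}$---one derivative more than the $H^3$ control provided by the bootstrap \eqref{030607}. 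Your claimed bound $C\gamma^{-2}\|m\|_{H^3}\|\nabla\tilde\eta\|_{H^2}$ therefore does not follow. The paper avoids this by subtracting $\gamma^{-1}\,\mathrm{div}\,m_L$ instead of $\gamma^{-1}\,\mathrm{div}\,m$, where $m_L$ solves the linear damped heat equation $\partial_t m_L-\gamma^{-1}\Delta m_L+\gamma m_L=0$ with data $m_0$. Parabolic smoothing then gives $\nabla m_L\in L^2_tH^3$, which supplies the missing derivative; the leftover $m-m_L$ is packaged into the auxiliary variable $w=\nabla(\rho-\rho_\infty)+\gamma(m-m_L)$, for which an $H^2$ estimate closes using only $\|m\|_{H^3}$ and $\|u\|_{H^3}$. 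The equation for $\bar\rho=\rho-\rho_*-\gamma^{-1}\,\mathrm{div}\,m_L$ then carries only the source $-\gamma^{-1}\,\mathrm{div}\,w$, which is one derivative of an $H^2$ object and causes no loss.

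A secondary remark: the continuity argument you propose for the quadratic flux $\mathrm{div}\,\mathrm{div}(m\otimes m/\rho)$ is unnecessary. Since $m=\rho u$, one rewrites $m\otimes m/\rho=u\otimes m$ and estimates $\|\mathrm{div}(u\otimes m)\|_{H^2}\le C\|u\|_{H^3}\|m\|_{H^3}$, which never touches $1/\rho$. The paper uses exactly this; no inner bootstrap on $\rho$ is needed for that term.
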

\begin{proof} We proceed by introducing the auxiliary variable $w$ that couples $\rho$ and $m$ and by comparing $\rho$ to the parabolic relaxation $\rho_*$ from Lemma \ref{lemM}. This allows us to propagate the pointwise bounds of $\rho$ uniformly in time.

\medskip
\noindent \emph{Step 1: Introduction of $w$ and control of $m_L$.} We define a new variable 
	\begin{align*}
		w=\nabla(\rho-\rho_\infty)+\gamma (m-m_{L}),
	\end{align*}	
	where $m_{L}(x,t)$ solves the linear damped heat equation
	\begin{equation*}
		\left\{
		\begin{aligned}
			&\partial_tm_{L}-\gamma^{-1}\Delta m_{L}+\gamma m_{L}=0,\\	
			&m_{L}(x,0)=m_0(x).
		\end{aligned}
		\right.
	\end{equation*}
By multiplying by $m_L$ in $H^3$ and integrating by parts, we obtain the energy identity
	\begin{align}\label{030701}
		\|m_{L}\|_{H^3}^2+2\int_0^t(\gamma^{-1}\|\nabla m_{L}(s)\|_{H^3}^2+\gamma\|m_{L}(s)\|_{H^3}^2)ds=\|m_{0}\|_{H^3}^2.
	\end{align}
	
\medskip

\noindent \emph{Step 2: Evolution of $w$ and energy inequality.} Rewrite the momentum equation $\eqref{MF}_2$ as  
	\[
	\partial_t m+\nabla(\rho-\rho_\infty)+\gamma m=-\text{div}\Big(\frac{m\otimes m}{\rho}\Big)-(\rho-\rho_\infty)\nabla V=-f.
	\]
	Denote $\bar{m} =m-m_{L}$, then we obtain 
\[
		w=\nabla(\rho-\rho_\infty)+\gamma\bar{m} =-(\partial_t\bar{m} +\gamma^{-1}\Delta m_{L}+f).
\]

	By standard energy estimates, we deduce
	\begin{align*}
		& \|w\|_{H^2}^2+\frac{1}{2}\gamma \frac{d}{dt}\left(
		\|\bar{m} \|_{H^2}^2+2\gamma^{-1}\sum_{k=0}^2 \int_{\R^3} \nabla^k\bar{m}  \cdot \nabla^k\nabla(\rho-\rho_\infty) dx\right)\\
		&\quad =-\sum_{k=0}^2  \int_{\R^3} \nabla^k\text{div}\bar{m}  \cdot \nabla^k\partial_t(\rho-\rho_\infty)dx - \sum_{k=0}^2  \int_{\R^3} (\nabla^kf+\gamma^{-1}\nabla^k\Delta m_{L}) \cdot \nabla^kw \, dx.
	\end{align*}
	Using the equation $\eqref{MF}_1$, we get 
	\begin{align*}
		-\sum_{k=0}^2  \int_{\R^3} \nabla^k\text{div}\bar{m}  \cdot \nabla^k\partial_t(\rho-\rho_\infty)dx &=\|\text{div}m\|_{H^2}^2-\sum_{k=0}^2  \int_{\R^3} \nabla^k\text{div}m_L \cdot \nabla^k\text{div}m\,dx\\
		&\leq C(\|m\|_{H^3}^2+\|m_{L}\|_{H^3}^2)
	\end{align*}
	and
	\begin{align*}
		& - \sum_{k=0}^2  \int_{\R^3} (\nabla^kf+\gamma^{-1}\nabla^k\Delta m_{L}) \cdot \nabla^kw \, dx\\
		&\quad \leq \frac{1}{2}\|w\|_{H^2}^2+C\|f\|_{H^2}^2
		+C\gamma^{-2}\|\nabla m_{L}\|_{H^3}^2\\
		&\quad \leq \frac{1}{2}\|w\|_{H^2}^2+C(\|m\|_{H^3}^2+\|u\|_{H^3}^2+  \epsilon_0\|\nabla(\rho-\rho_\infty)\|_{L^2})^2
		+C\gamma^{-2}\|\nabla m_{L}\|_{H^3}^2,
	\end{align*}
	where we used the fact that 
	\begin{align*}
		\|f\|_{H^2}&\leq C \|m\|_{H^3}\|u\|_{H^3}+C\|\nabla(\rho-\rho_\infty)\|_{H^2}\|\nabla V\|_{H^3}\\	
		&\leq C(\|m\|_{H^3}^2+\|u\|_{H^3}^2+\epsilon_0\|\nabla(\rho-\rho_\infty)\|_{H^2}).
	\end{align*}
This gives
	\begin{align*}
		& \|w\|_{H^2}^2+\gamma \frac{d}{dt}\left(
		\|\bar{m} \|_{H^2}^2+2\gamma^{-1}\sum_{k=0}^2 \int_{\R^3} \nabla^k\bar{m}  \cdot \nabla^k\nabla(\rho-\rho_\infty) dx\right)\\
		&\quad \lesssim \|m_{L}\|_{H^3}^2+\gamma^{-2}\|\nabla m_{L}\|_{H^3}^2+\|f\|_{H^2}^2+\|m\|_{H^3}^2\\
		&\quad \lesssim \|m_{L}\|_{H^3}^2+\gamma^{-2}\|\nabla m_{L}\|_{H^3}^2+\epsilon_0^2\|\nabla(\rho-\rho_\infty)\|_{H^2}^2+(\|m\|_{H^3}^2+\|u\|_{H^3}^2)^2
		+\|m\|_{H^3}^2.
	\end{align*}
	Integrating the above equation with respect to time and using \eqref{030701} yield that 
	\begin{align*}
		& \int_0^t\|w(s)\|_{H^2}^2ds+\gamma \left(
		\|\bar{m} \|_{H^2}^2+2\gamma^{-1}\sum_{k=0}^2 \int_{\R^3} \nabla^k\bar{m}  \cdot \nabla^k\nabla(\rho-\rho_\infty) dx\right)\\
		&\quad \leq C\int_0^t\Big((\|m(s)\|_{H^3}^2+\|u(s)\|_{H^3}^2)^2+\|m(s)\|_{H^3}^2\Big)ds+C\gamma^{-1}\|m_0\|_{H^3}^2\\
		&\qquad +C\gamma^{-1}\int_0^t(\gamma \|m_{L}(s)\|_{H^3}^2+\gamma^{-1}\|\nabla m_{L}(s)\|_{H^3}^2)ds+C\epsilon_0^2\int_0^t \|\nabla(\rho-\rho_\infty)(s)\|_{H^2}^2ds\\
		&\quad \leq C\int_0^t\Big((\|m(s)\|_{H^3}^2+\|u(s)\|_{H^3}^2)^2+\|m(s)\|_{H^3}^2\Big)ds
		+C\gamma^{-1}\|m_0\|_{H^3}^2\nonumber\\
		&\qquad+C\epsilon_0^2\int_0^t\|\nabla(\rho-\rho_\infty)(s)\|_{H^2}^2ds.
	\end{align*}
	Noting that 
	\begin{align*}
		\sum_{k=0}^2\int_{\R^3} \nabla^k\bar{m}  \cdot \nabla^k\nabla(\rho-\rho_\infty) dx \leq \frac{1}{4}\gamma \|\bar{m} \|_{H^2}^2+\gamma^{-1}\|\nabla(\rho-\rho_\infty)\|_{H^2}^2
	\end{align*}
	and 
	\[
	\gamma \|\bar{m} \|_{H^2}^2+2\sum_{k=0}^2\int_{\R^3} \nabla^k\bar{m}  \cdot \nabla^k\nabla(\rho-\rho_\infty) dx\geq \frac{1}{2}\gamma \|\bar{m} \|_{H^2}^2-2\gamma^{-1}\|\nabla(\rho-\rho_\infty)\|_{H^2}^2.
	\]
Thus, we obtain 
	\begin{align}\label{031301}
	\begin{aligned}
		& \int_0^t\|w(s)\|_{H^2}^2ds+\gamma \|\bar{m} (t)\|_{H^2}^2\\
		&\quad \leq C\int_0^t\Big((\|m(s)\|_{H^3}^2+\|u(s)\|_{H^3}^2)^2+\|m(s)\|_{H^3}^2 \Big)ds		+C\gamma^{-1}\|m_0\|_{H^3}^2\\
		&\qquad+C\epsilon_0^2\int_0^t\|\nabla(\rho-\rho_\infty)(s)\|_{H^2}^2ds+C\gamma^{-1}\|\nabla(\rho-\rho_\infty)\|_{H^2}^2\\
		&\quad \leq C\gamma\delta^4+C\delta^2+ C\gamma^{-1}(B_0^2+B_0)^2+ C\gamma \epsilon_0^2 \mathcal{R}_0^2,
\end{aligned}
	\end{align}
	due to \eqref{030602}.

\medskip
\noindent \emph{Step 3: Relating $\rho$ to $\rho_*$ and $m_L$.} Define the parabolic comparison $\rho_*$ solving \eqref{H1} with the same initial data. By Lemma \ref{lemM},
	\begin{align}\label{Max}
		\rho_1\leq \rho_*\leq \rho_2.
	\end{align}
	By the definition of $w$, we get
	\[
	\mathrm{div}m = \mathrm{div}\bar{m} +\mathrm{div}m_{L}=\gamma^{-1}\mathrm{div}w - \gamma^{-1}\Delta(\rho-\rho_\infty)+\mathrm{div}m_{L}.
	\]
	This relation together with the continuity equation $\eqref{MF}_1$ yields 
	\[
	\partial_{t}(\rho-\rho_\infty)-\gamma^{-1}\Delta(\rho-\rho_\infty)=-\gamma^{-1}\mathrm{div}w-\mathrm{div}m_{L}.
	\]
	Then we find
	\[
	-\mathrm{div}m_{L}=\partial_{t}(\gamma^{-1}\mathrm{div}m_{L})-\gamma^{-1}\Delta(\gamma^{-1}\mathrm{div}m_{L}),
	\]
and thus
	\begin{align}\label{rhogamma}
	\partial_{t}(\rho-\rho_\infty-\gamma^{-1}\mathrm{div}m_{L})-\gamma^{-1}\Delta(\rho -\rho_\infty-\gamma^{-1}\mathrm{div}m_{L})=-\gamma^{-1}\mathrm{div}w.
	\end{align}
	Let $  \bar{\rho} =\rho-\rho_{*}-\gamma^{-1}\mathrm{div}m_{L}$. Then, combining \eqref{rhogamma} and $\eqref{H1}_1$ gives
	\[
	\begin{cases}
		\partial_{t}  \bar{\rho} -\gamma^{-1}\Delta  \bar{\rho} =-\gamma^{-1}\mathrm{div}w-\gamma^{-1}\text{div}((\rho_*-\rho_\infty)\nabla V),\\
		  \bar{\rho} (x,0)=-\gamma^{-1}\mathrm{div}m_{0}.
	\end{cases}
	\]
	Taking $H^2$ energy estimates, we obtain
	\begin{align*}
		&\frac{d}{dt}\|  \bar{\rho} \|_{H^{2}}^{2}+2\gamma^{-1}\|\nabla  \bar{\rho} \|_{H^{2}}^{2}\cr
		&\quad =-2\gamma^{-1}\sum_{k=0}^2\int_{\R^3}\nabla^k  \bar{\rho}  \cdot \nabla^k\mathrm{div}w \,dx -2\gamma^{-1}\sum_{k=0}^2 \int_{\R^3} \nabla^k  \bar{\rho}  \cdot \nabla^k\text{div}((\rho_*-\rho_\infty)\nabla V)dx\\
			&  \quad =2\gamma^{-1}\sum_{k=0}^2\int_{\R^3}\nabla^{k+1}\bar{\rho}  \cdot \nabla^kw \,dx +2\gamma^{-1}\sum_{k=0}^2 \int_{\R^3} \nabla^{k+1}\bar{\rho}  \cdot \nabla^k((\rho_*-\rho_\infty)\nabla V)dx\\	
		&\quad \leq \gamma^{-1}\|\nabla  \bar{\rho} \|_{H^2}^2+C\gamma^{-1}\|w\|_{H^2}^2
		+C\gamma^{-1}\|(\rho_*-\rho_\infty)\nabla V\|_{H^2}^2\\
		&\quad \leq \gamma^{-1}\|\nabla   \bar{\rho} \|_{H^2}^2+C\gamma^{-1}\|w\|_{H^2}^2+C\gamma^{-1}\epsilon_0^2\|\nabla(\rho_*-\rho_\infty)\|_{H^2}^2.
	\end{align*}
	Integrating with respect to time  and using \eqref{031301} yields  
	\begin{align*}
		&\|  \bar{\rho} (t)\|_{H^2}^2+\gamma^{-1}\int_0^t\|\nabla  \bar{\rho} (s)\|_{H^2}^2ds\\
		&\quad \leq C\gamma^{-1}\int_0^t\|w(s)\|_{H^2}^2ds+C\gamma^{-1}\epsilon_0^2B_0^2+C\gamma^{-2}\|m_0\|_{H^3}^2	+\|  \bar{\rho} _0\|_{H^2}^2\\
		&\quad \leq C\delta^4+C\gamma^{-1}\delta^2+C\gamma^{-2} (B_0^2+B_0)^2 +  C\epsilon_0^2 \mathcal{R}_0^2+C\gamma^{-1}\epsilon_0^2B_0^2.
	\end{align*}

\medskip
\noindent \emph{Step 4: Conclusion by Sobolev embedding.} By Sobolev embedding $H^2(\R^3) \hookrightarrow L^\infty(\R^3)$, we get
	\begin{align*}
		\|\rho-\rho_*\|_{L^\infty}\lesssim \|\rho-\rho_*\|_{H^2}\leq  C(\delta^4+\gamma^{-1}\delta^2+\gamma^{-2} (B_0^2+B_0)^2 +  \epsilon_0^2 \mathcal{R}_0^2 +\gamma^{-1}\epsilon_0^2B_0^2)^{\frac{1}{2}}.
	\end{align*}
This together with \eqref{Max} yields 
	\begin{align*}
		&\rho_{1}-C(\delta^4+\gamma^{-1}\delta^2+\gamma^{-2} (B_0^2+B_0)^2 +  \epsilon_0^2 \mathcal{R}_0^2+\gamma^{-1}\epsilon_0^2B_0^2)^{\frac{1}{2}}\\
		&\quad \leq\rho(x,t)\leq\rho_{2}+C(\delta^4+\gamma^{-1}\delta^2+\gamma^{-2} (B_0^2+B_0)^2 +  \epsilon_0^2 \mathcal{R}_0^2 +\gamma^{-1}\epsilon_0^2B_0^2)^{\frac{1}{2}}.
	\end{align*}
 	Due to the smallness of $\gamma^{-1}$, $\delta$, and $\epsilon_0$,  and the boundedness of $B_0$ , we conclude
\[
		\frac{1}{2}\rho_1\leq \rho(x,t)\leq \frac{3}{2}\rho_2.
\]
\end{proof}

%
%
%
%
%
%
%
%
%
%
%
%
%

\subsection{ High-order energy estimates}

With the uniform $L^\infty$ bound on the density now established, the next step is to derive global-in-time  \emph{a priori} energy estimates for the solution. These estimates are crucial for closing the bootstrap assumption \eqref{a-priori} and ensuring that the local solution can be extended globally. To this end, we establish the following proposition, which provides uniform bounds for the Sobolev norms of $(\rho, m, u)$ and their derivatives.

\begin{proposition}\label{p31}
	Assume that all the conditions of Theorem \ref{thm1} hold, and let $(\rho, u)(x,t)$ be the regular solution to system \eqref{MF1} satisfying the a priori assumption \eqref{a-priori}.   Then there exist constants $C_0=C(B_0)$ and $\mathcal{R}_0 = \mathcal{R}_0(B_0)$, independent of $t$, such that for all $t>0$ 
\[
	\|(\rho-\rho_\infty,m,u)(t)\|_{H^3}^2+\gamma\int_0^t \|(m, u)(s)\|_{H^3}^2 ds\leq C_0\quad \mbox{and} \quad \gamma^{-1}
	\int_0^t\|\nabla \phi(s)\|_{H^2}^2ds\leq \frac{1}{2}\mathcal{R}_0^2.
\]
\end{proposition}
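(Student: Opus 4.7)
The aim is to close the bootstrap assumption \eqref{a-priori} by extracting from \eqref{MF} two complementary layers of dissipation: the strong $\gamma$-weighted dissipation of $m$ produced by the damping term, and a weak $\gamma^{-1}$-weighted dissipation of $\nabla(\rho-\rho_\infty)$ obtained through a cross-term identity between the continuity and momentum equations. I would work in the momentum formulation \eqref{MF} (where the damping dissipation is cleanest in $m$) and transfer back to $u$-norms at the end using the uniform density bounds from Proposition \ref{A2}. First, I apply $\nabla^k$ for $k=0,1,2,3$ to \eqref{MF}, pair with $\nabla^k(\rho-\rho_\infty)$ and $\nabla^k m$ respectively in $L^2$, and sum in $k$. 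The leading pressure–divergence cross-terms cancel by integration by parts, yielding
\[
\tfrac{1}{2}\tfrac{d}{dt}\|(\rho-\rho_\infty, m)\|_{H^3}^2 + \gamma \|m\|_{H^3}^2 = \mathcal{N}_1,
\]
where $\mathcal{N}_1$ collects the potential contribution $-\sum_{k\le 3}\int \nabla^k((\rho-\rho_\infty)\nabla V)\cdot\nabla^k m\,dx$ and the convective flux $-\sum_{k\le 3}\int \nabla^k \mathrm{div}(m\otimes m/\rho)\cdot\nabla^k m\,dx$. Using the commutator bound of Lemma \ref{lema2}, Proposition \ref{A2} (to bound $1/\rho$ and its derivatives in $L^\infty$), and the bootstrap \eqref{030607}, I expect $|\mathcal{N}_1| \lesssim \epsilon_0\|\rho-\rho_\infty\|_{H^3}\|m\|_{H^3} + \gamma^{1/2}\delta\|m\|_{H^3}^2$, which is absorbed into $\gamma\|m\|_{H^3}^2$ once $\epsilon_0,\delta,\gamma^{-1}$ are small.

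To produce the pressure dissipation, I introduce the Lighthill-type cross-functional
\[
\mathcal{F}(t) := \sum_{k=0}^{2} \int \nabla^k m \cdot \nabla^{k+1}(\rho - \rho_\infty)\, dx,
\]
and compute $\tfrac{d}{dt}\mathcal{F}$ using both equations of \eqref{MF}. After integration by parts this has the form
\[
\tfrac{d}{dt}\mathcal{F}(t) = -\|\nabla(\rho-\rho_\infty)\|_{H^2}^2 + \|\mathrm{div}\, m\|_{H^2}^2 - \gamma\mathcal{F}(t) + \mathcal{N}_2,
\]
where $\mathcal{N}_2$ is structurally analogous to $\mathcal{N}_1$. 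Multiplying by $\alpha\gamma^{-1}$ with small $\alpha>0$, the divergence term becomes $\alpha\gamma^{-1}\|\mathrm{div}\, m\|_{H^2}^2$ and is absorbed into the damping term $\gamma\|m\|_{H^3}^2$ (since $\gamma^{-1}\ll\gamma$ for large $\gamma$), while $-\alpha\mathcal{F}$ is split by Cauchy–Schwarz into $\tfrac{1}{4}\alpha\gamma^{-1}\|\nabla(\rho-\rho_\infty)\|_{H^2}^2 + C\alpha\gamma\|m\|_{H^2}^2$, both harmlessly controlled. Assembling the Lyapunov functional
\[
\mathcal{E}(t) := \|(\rho-\rho_\infty, m)\|_{H^3}^2 + \alpha\gamma^{-1}\mathcal{F}(t),
\]
which for small $\alpha$ is equivalent to $\|(\rho-\rho_\infty, m)\|_{H^3}^2$, then yields
\[
\tfrac{d}{dt}\mathcal{E}(t) + c\gamma\|m\|_{H^3}^2 + c\alpha\gamma^{-1}\|\nabla(\rho-\rho_\infty)\|_{H^2}^2 \le 0.
\]

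Integrating in time and invoking the initial bound \eqref{030602} produces
\[
\mathcal{E}(t) + c\gamma\int_0^t \|m(s)\|_{H^3}^2\,ds + c\alpha\gamma^{-1}\int_0^t \|\nabla(\rho-\rho_\infty)(s)\|_{H^2}^2\,ds \le C(B_0),
\]
which gives both claimed estimates after choosing $\mathcal{R}_0^2 := 2(c\alpha)^{-1}C(B_0)$ and converting $\nabla(\rho-\rho_\infty)$ back to $\nabla\phi$ via $\rho = \rho_\infty e^\phi$ with the density bounds of Proposition \ref{A2}. The $u$-norm control follows from $u = m/\rho$ and the product rule, which, together with Proposition \ref{A2} and the $H^3$-control of $\rho-\rho_\infty$, give $\|u\|_{H^3}\sim\|m\|_{H^3}$. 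The principal obstacle is the $\gamma$-scaling bookkeeping: the coefficient $\alpha\gamma^{-1}$ in $\mathcal{E}$ must be calibrated so that the pressure dissipation emerges at the desired $\gamma^{-1}$-scale while the residual $\gamma^{-1}\|\mathrm{div}\, m\|_{H^2}^2$ stays strictly dominated by the damping $\gamma\|m\|_{H^3}^2$, which forces $\gamma$ to be large; moreover, controlling the convective flux $\mathrm{div}(m\otimes m/\rho)$ in $H^3$ is possible only because Proposition \ref{A2} prevents degeneracy of $1/\rho$ and its derivatives, and the potential cross-terms are absorbable only thanks to the smallness $\|\nabla V\|_{H^3}\le\epsilon_0$.
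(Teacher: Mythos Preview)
Your overall Lyapunov strategy is sound, but there is a genuine derivative-counting obstruction at the top order in the momentum formulation that prevents the estimate $|\mathcal{N}_1|\lesssim \gamma^{1/2}\delta\|m\|_{H^3}^2$ from closing. Write $\text{div}(m\otimes m/\rho)=(u\cdot\nabla)m+m\,\text{div}\,u$. At $k=3$, the transport part $(u\cdot\nabla)m$ is indeed harmless after symmetrization, but the remaining contribution $\int \nabla^3(m\,\text{div}\,u)\cdot\nabla^3 m\,dx$ contains the Leibniz term $\int m\,\nabla^3\text{div}\,u\cdot\nabla^3 m\,dx$, which involves four derivatives of $u$. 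Integrating by parts merely transfers the fourth derivative onto $m$; since $u,m$ are only in $H^3$, this term cannot be bounded by $\|u\|_{H^3}\|m\|_{H^3}^2$, and the claimed absorption into $\gamma\|m\|_{H^3}^2$ fails. (A secondary imprecision: the potential contribution should be bounded by $\epsilon_0\|\nabla(\rho-\rho_\infty)\|_{H^2}\|m\|_{H^3}$ via $\|\rho-\rho_\infty\|_{L^6}\lesssim\|\nabla(\rho-\rho_\infty)\|_{L^2}$, not by the full $H^3$ norm of $\rho-\rho_\infty$, since $\|\rho-\rho_\infty\|_{L^2}$ has no time-integrated dissipation.)

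This is exactly why the paper does \emph{not} carry the full $H^3$ estimate in the momentum variables. Instead it splits the argument: Step~1 performs only the $H^2$ estimate for $(\rho-\rho_\infty,m)$, where $\|\text{div}(u\otimes m)\|_{H^2}\le C\|u\|_{H^3}\|m\|_{H^3}$ is legitimate, and Step~2 (Appendix~\ref{app:high-order}) obtains the top-order bound $\|\nabla(\phi,u)\|_{H^2}$ in the \emph{velocity} formulation. There the nonlinearity in the momentum equation is the pure transport term $u\cdot\nabla u$, whose $\nabla^3$-level commutator closes with only $H^3$ regularity, and the cross-functional $\gamma^{-1}\sum_{k\le 2}\int\nabla^k u\cdot\nabla^{k+1}\phi$ produces the $\gamma^{-1}\|\nabla\phi\|_{H^2}^2$ dissipation. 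The final $H^3$ control of $(\rho-\rho_\infty,m)$ is then recovered a posteriori from the equivalences $\|u\|_{H^3}\sim\|m\|_{H^3}$ and $\|\nabla\phi\|_{H^2}\sim\|\nabla(\rho-\rho_\infty)\|_{H^2}$. Your Lyapunov bookkeeping would go through if transplanted to this hybrid setting; the gap is purely the choice of formulation at the top derivative level.
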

\begin{proof}
We divide the proof into two steps.

\medskip
\noindent \emph{Step 1: Low-order energy estimate.} We first derive the $H^2$-energy estimate for $(\rho-\rho_\infty,m)$. Multiplying $\nabla^k(\rho-\rho_\infty)$ and $\nabla^k m$ to the first and second equations of \eqref{MF}, respectively, integrating over $\R^3$, and summing over $k=0,1,2$, we obtain
	\begin{align*}
		&\frac{1}{2}\frac{d}{dt}\|(\rho-\rho_\infty, m)\|_{H^2}^2+\gamma \|m\|_{H^2}^2\\
		&\quad \leq C\|m\|_{H^2}\|\nabla(\rho-\rho_\infty)\|_{H^1}\|\nabla V\|_{H^2}+C\|u\|_{H^3}\|m\|_{H^3}\|m\|_{H^2}\\
		&\quad \leq  \frac12  \gamma\|m\|_{H^2}^2+C\gamma^{-1}\|\nabla(\rho-\rho_\infty)\|_{H^2}^2\|\nabla V\|_{H^2}^2
		+C\gamma^{-1}(\|m\|_{H^3}^2+\|u\|_{H^3}^2)^2\\
		&\quad \leq  \frac12 \gamma\|m\|_{H^2}^2+C\gamma^{-1}\epsilon_0^2\|\nabla(\rho-\rho_\infty)\|_{H^2}^2
		+C\gamma^{-1}(\|m\|_{H^3}^2+\|u\|_{H^3}^2)^2.
	\end{align*}
Here, we used the following estimate for the nonlinear flux term: 
	\begin{align*}
	\left\|\text{div}(\frac{m\otimes m}{\rho})\right\|_{H^2}=\|\text{div}(u\otimes m)\|_{H^2}\leq C\|u\|_{H^3}\|m\|_{H^3}.
	\end{align*}
 This gives 
\[
		\frac{d}{dt}\|(\rho-\rho_\infty, m)\|_{H^2}^2+\gamma \|m\|_{H^2}^2  \leq C\gamma^{-1}\epsilon_0^2\|\nabla(\rho-\rho_\infty)\|_{H^2}^2
		+C\gamma^{-1}(\|m\|_{H^3}^2+\|u\|_{H^3}^2)^2.
\]
	Integrating the above inequality with respect to time and using the \emph{a priori} bound \eqref{a-priori} give
	\begin{align}\label{030604}
	\begin{aligned}
		&\|(\rho-\rho_\infty, m)\|_{H^2}^2+\gamma \int_0^t\|m(s)\|_{H^2}^2ds \\
		&\quad \leq C\|(\rho_0-\rho_\infty,m_0)\|_{H^2}^2+C\gamma^{-1}\epsilon_0^2\int_0^t\|\nabla(\rho-\rho_\infty)(s)\|_{H^2}^2ds \\
		&\qquad +C\gamma^{-1}\int_0^t(\|m(s)\|_{H^3}^2+\|u(s)\|_{H^3}^2)^2ds \\
		&\quad \leq C( (B_0^2+B_0)^2 +B_0^2)+ C\epsilon_0^2\mathcal{R}_0^2 + C\delta^4 \\
		&\quad \leq   C(B_0^4+B_0^2) ,
		\end{aligned}
	\end{align}
owing to the smallness of $\delta, \epsilon_0$ and \eqref{030602}.

\medskip
\noindent \emph{Step 2: High-order energy estimate.} To control the high-order derivatives, we employ the velocity formulation of the system \eqref{Main1}. Set $\phi=\ln \rho-\ln \rho_\infty$ and $\phi_0=\ln \rho_0-\ln \rho_\infty$ and rewrite \eqref{Main1} in terms of $\phi$ and $u$:
\begin{equation*}
 	\left\{
 	\begin{aligned}	
 		&\partial_t\phi+\text{div}u=-u\cdot\nabla \phi+u\cdot\nabla V,\\
 		&\partial_tu+\nabla \phi+\gamma u=-u\cdot\nabla u.
 	\end{aligned}
 	\right.
 \end{equation*}
Differentiating up to third order and applying energy estimates, commutator bounds (Lemma \ref{lema2}), and a hypocoercivity-type argument, we obtain
	\begin{equation}\label{ty1}
		\|\nabla(\phi,u)(t)\|_{H^2}^2+\gamma\int_0^t\|\nabla u(s)\|_{H^2}^2ds  \leq C_0 \quad \mbox{and} \quad \gamma^{-1}
	\int_0^t\|\nabla \phi(s)\|_{H^2}^2ds\leq \frac{1}{2}\mathcal{R}_0^2
	\end{equation}
for some constants  $C_0=C(B_0) > 0$ and $\mathcal{R}_0 = \mathcal{R}_0(B_0)>0$. Since the derivation of \eqref{ty1} involves lengthy commutator estimates and higher-order nonlinear bounds, we postpone its full proof to Appendix \ref{app:high-order} for clarity of presentation.
	
Moreover, since $\phi=\ln\rho-\ln \rho_\infty$, it follows that 
	\begin{align}\label{031102}
		\|\nabla(\rho-\rho_\infty)\|_{H^2}=\|\nabla(\rho_\infty e^{\phi}-\rho_\infty)\|_{H^2}\leq C\|\nabla \phi\|_{H^2}.	
	\end{align}
Finally,  combining \eqref{ty1} and \eqref{031102},  we conclude
	\begin{align*}
	\|(\rho-\rho_\infty,m,u)(t)\|_{H^3}^2+\gamma\int_0^t \|(m, u)(s)\|_{H^3}^2 ds\leq C_0,
	\end{align*}
where $C_0=C(B_0)$ denotes a positive constant independent of time. 
This completes the proof.
\end{proof}


\subsection{Proof of Theorem \ref{thm1}.} 
We are now in a position to complete the proof of Theorem \ref{thm1}. 

Proposition \ref{p31} provides a uniform-in-time \emph{a priori} bound for the solution to \eqref{MF1} in the high-order Sobolev space $H^3$,
\begin{align}\label{TR1}
	\|(\rho-\rho_\infty,m,u)(t)\|_{H^3}^2+\gamma\int_0^t \|(m, u)(s)\|_{H^3}^2 ds\leq C_0, \quad \gamma^{-1}
	\int_0^t\|\nabla \phi(s)\|_{H^2}^2ds\leq \frac{1}{2}\mathcal{R}_0^2,
\end{align}
where $C_0$ and $\mathcal{R}_0$ depend only on the initial data.

To close the \emph{a priori} assumption \eqref{a-priori}, we recall that $Z_1(t)$ and $Z_2(t)$ were defined as scaled energy functionals combining both instantaneous $H^3$ norms and time-integrated dissipation. By choosing $\gamma^{-1}$ and $\epsilon_0$ sufficiently small, the above estimate immediately yields 
\begin{align*}
Z_1(t)^2 
	&\leq 2\gamma^{-1}\|(\rho-\rho_\infty,m,u)(t)\|_{H^3}^2 + 2\int_0^t   \|(m,u)(s)\|_{H^3}^2 \,ds  \leq C\gamma^{-1} C_0	\leq \frac{1}{4}\delta^2
\end{align*}
and 
	\begin{align*}
Z_2(t)^2=\gamma^{-1}\int_0^t\|\nabla(\rho-\rho_\infty)(s)\|_{H^2}^2ds\leq \frac{1}{2}\mathcal{R}_0^2, 	
\end{align*}
which confirms that the bootstrap assumption \eqref{a-priori} is indeed valid for all $t>0$. 

Finally, combining this global a priori bound with the local-in-time existence and uniqueness result (Theorem \ref{lem-loc}), we apply the standard continuity argument to extend the local solution to a unique global classical solution. Thus, the global well-posedness of \eqref{Main1} is established for sufficiently large damping coefficients.

According to the Mean Value Theorem and Proposition \ref{A2}, one has
\[
	\phi=\ln \rho-\ln\rho_\infty=\frac{1}{\rho_y}(\rho-\rho_\infty),\quad \rho_y\in (\rho_1,\rho_2).	
\]
Then we deduce from \eqref{TR1} that 
\begin{align*}
	\|(\phi,u)(t)\|_{H^3}^2
	+\int_0^t(\gamma\|u(s)\|_{H^3}^2+\gamma^{-1}\|\nabla\phi(s)\|_{H^2}^2)\,ds\leq C_0,
\end{align*}
where the positive constant $C_0=C(B_0)$ only depends on the initial data $B_0=\|(\phi_0,u_0)\|_{H^3}$. Therefore, we complete the proof of Theorem \ref{thm1}.

	\section{Global well-posedness and large-time behavior  for small initial data }\label{Sec:thm3-4}	
In this section, we establish the global-in-time well-posedness and derive the decay rates for solutions to the reformulated damped Euler system \eqref{Main31}, thereby providing the detailed proof of the existence part in Theorem \ref{thm3}.

\subsection{Global well-posedness for small data}	
To extend the local classical solution constructed in Theorem \ref{lem-loc} to a global one, 
we first derive uniform-in-time energy bounds. This requires a careful analysis of the nonlinear structure of the system and the introduction of suitable energy functionals. 
We begin by defining the basic energy and dissipation quantities, followed by deriving high-order energy inequalities and mixed estimates that are key to closing the bootstrap argument.

 \subsubsection{Energy framework and high-order energy inequality} 
 We introduce the energy functional $E(t)$ and its associated dissipation rate $D(t)$ by
\[
		E(t) :=\|\phi(t)\|_{H^3}+\|u(t)\|_{H^3}\quad \mbox{and} \quad D(t):=\|\nabla\phi(t)\|_{H^2}+\|u (t)\|_{H^3},
\]
	respectively.  To control the nonlinear terms, we make the following \emph{a priori} assumption:
	\begin{equation}\label{030802}
		E(t)=\|\phi(t)\|_{H^3}+\|u(t)\|_{H^3}\leq \delta,
	\end{equation}
where $\delta>0$ is a sufficiently small constant. This smallness will be used to close the energy estimates.

We first derive a differential inequality for the high-order energy.		
	\begin{lemma}\label{l1}
		Let $T>0$ and $(\phi,u)$ be a classical solution of system \eqref{Main31} satisfying the a priori bound \eqref{030802}. Then there eixsts $C>0$ independent of $t$ such that for $0<t\leq T$ 
		\begin{align}\label{010703}
			\frac{1}{2}\frac{d}{d t}\|(\phi, u)\|_{H^3}^2+\gamma \|u\|_{H^3}^2\leq CE(t)D(t)^2+C\delta_0 D(t)^2.
		\end{align}
	\end{lemma}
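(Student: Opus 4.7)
The plan is to carry out standard $H^3$ energy estimates directly on the reformulated perturbation system \eqref{Main31}. For each $k \in \{0,1,2,3\}$, I apply $\nabla^k$ to both equations of \eqref{Main31}, take $L^2$ inner products with $\nabla^k\phi$ and $\nabla^k u$ respectively, and sum. The principal linear cross terms cancel via integration by parts,
\[
\langle \nabla^k \operatorname{div} u, \nabla^k \phi\rangle + \langle \nabla^{k+1} \phi, \nabla^k u\rangle = 0,
\]
leaving exactly $\tfrac12\tfrac{d}{dt}\|(\phi,u)\|_{H^3}^2 + \gamma\|u\|_{H^3}^2$ on the left-hand side after summation in $k$.

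The right-hand side decomposes into three families of nonlinear integrals arising from $f_1 = -u\cdot\nabla\phi + u\cdot\nabla V$ and $f_2 = -u\cdot\nabla u$, namely
\[
I_1 := -\sum_{k=0}^{3}\langle\nabla^k(u\cdot\nabla\phi),\nabla^k\phi\rangle, \quad I_2 := \sum_{k=0}^{3}\langle\nabla^k(u\cdot\nabla V),\nabla^k\phi\rangle, \quad I_3 := -\sum_{k=0}^{3}\langle\nabla^k(u\cdot\nabla u),\nabla^k u\rangle.
\]
For $I_1$ and $I_3$ I would split $\nabla^k(u\cdot\nabla g) = u\cdot\nabla(\nabla^k g) + [\nabla^k,u]\nabla g$ and integrate by parts in the transport piece to obtain $-\tfrac12\int_{\mathbb{R}^3}(\operatorname{div} u)|\nabla^k g|^2\,dx$, which is bounded by $\|\operatorname{div} u\|_{L^\infty}\|\nabla^k g\|_{L^2}^2 \lesssim E(t)D(t)^2$ via the embedding $H^2\hookrightarrow L^\infty$ (noting that $\|u\|_{H^3}\leq D(t)$ puts the $u$-factor into $D(t)$ while one $g$-factor sits in $E(t)$). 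The commutator piece is handled by Lemma \ref{lema2} with H\"older splittings that place at most one factor outside $D(t)$, again producing a bound of the form $CE(t)D(t)^2$.

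For $I_2$, Leibniz's rule yields products of the form $\nabla^j u\cdot\nabla^{k-j}\nabla V\cdot\nabla^k\phi$. The $V$-factor supplies the smallness $\|\nabla V\|_{H^3}\leq\delta_0$, while the $u$-factor is controlled through $\|u\|_{H^3}\leq D(t)$; for $k\geq 1$, $\|\nabla^k\phi\|_{L^2}\leq D(t)$ directly, and for $k=0$ one uses the embedding $\|\phi\|_{L^6}\lesssim\|\nabla\phi\|_{L^2}$ to trade $\phi$ for $\nabla\phi \in D(t)$. Each resulting product is therefore bounded by $C\delta_0 D(t)^2$.

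The main obstacle will be the top-order ($k=3$) commutator terms, where one factor must already absorb three derivatives while another carries only one. A careful choice of H\"older exponents in Lemma \ref{lema2}, combined with Sobolev embedding and the a priori smallness $E(t)\leq\delta$, is what forces these terms into the desired form $CE(t)D(t)^2$ rather than a larger quantity that could not be absorbed by the damping dissipation. Assembling all contributions then yields \eqref{010703}.
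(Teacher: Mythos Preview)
Your approach is essentially the same as the paper's: apply $\nabla^k$, exploit the cancellation of the linear cross terms, split the convection terms into transport plus commutator, and use Lemma~\ref{lema2} together with Sobolev embeddings. One small point to tighten: for $I_1$ at $k=0$, your blanket estimate $\|\operatorname{div} u\|_{L^\infty}\|\nabla^k\phi\|_{L^2}^2$ yields $D(t)E(t)^2$ rather than $E(t)D(t)^2$, since $\|\phi\|_{L^2}$ is not part of $D(t)$; the paper avoids integrating by parts here and instead writes $\bigl|\int u\cdot\nabla\phi\,\phi\,dx\bigr|\le \|u\|_{L^3}\|\nabla\phi\|_{L^2}\|\phi\|_{L^6}\lesssim \|u\|_{H^1}\|\nabla\phi\|_{L^2}^2\le E(t)D(t)^2$, which is exactly the $L^6$ trick you already invoke for $I_2$.
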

	\begin{proof}
We begin with the basic $L^2$ energy estimate for \eqref{Main31}:
		\begin{align*}
			\frac{1}{2}\frac{d}{dt}\|(\phi,u)\|_{L^2}^2+\gamma\|u\|_{L^2}^2
			=\int_{\mathbb{R}^3}(-u\cdot\nabla\phi+ u\cdot\nabla V)\phi dx
			+\int_{\mathbb{R}^3}(-u\cdot\nabla u)\cdot udx.  
		\end{align*}	
Using H\"{o}lder's inequality and Sobolev embeddings, we estimate 
		\begin{align*}
			\left| \int_{\mathbb{R}^3}(-u\cdot\nabla\phi+ u\cdot\nabla V)\phi dx  \right|
			&\leq	\|u\|_{L^3}\|\nabla\phi\|_{L^2}\|\phi\|_{L^6}+ \|u\|_{L^2}\|\nabla V\|_{L^3}\|\phi\|_{L^6}\nonumber\\
			&\leq C\|u\|_{H^1}\|\nabla \phi\|_{L^2}^2
			+C\|u\|_{L^2}\|\nabla V\|_{H^1}\|\nabla\phi\|_{L^2}\nonumber\\
			&\leq CE(t)D(t)^2+C\delta_0 D(t)^2
		\end{align*}
		and 
		\begin{align*}
			\left|\int_{\mathbb{R}^3}(-u\cdot\nabla u)\cdot udx\right|\leq \|u\|_{L^3}\|\nabla u\|_{L^2}\|u\|_{L^6}\leq C\|u\|_{H^1}\|\nabla u\|_{L^2}^2\leq CE(t)D(t)^2.
		\end{align*}
Thus,
		\begin{align}\label{010705}
			\frac{1}{2}\frac{d}{dt}\|(\phi,u)\|_{L^2}^2+\gamma \|u\|_{L^2}^2\leq CE(t)D(t)^2+C\delta_0 D(t)^2.	
		\end{align}

Next, for the high-order derivatives $k\geq 1$, we obtain
		\begin{align*}
			&\frac{1}{2}\frac{d}{dt}\|\nabla^k(\phi, u)\|_{L^2}^2+\gamma\|\nabla^k u\|_{L^2}^2\nonumber\\
			&\quad =\int_{\mathbb{R}^3}\nabla^k(-u\cdot\nabla\phi+ u\cdot\nabla V)\cdot\nabla^k\phi dx
			+\int_{\mathbb{R}^3}\nabla^k(-u\cdot\nabla u)\cdot \nabla^k udx.
		\end{align*}	
We decompose
		\begin{align*}
			&\int_{\mathbb{R}^3}\nabla^k(-u\cdot\nabla\phi+ u\cdot\nabla V)\cdot\nabla^k\phi dx\nonumber\\
			&\quad =-\int_{\mathbb{R}^3}(\nabla^k(u\cdot\nabla\phi)-u\cdot\nabla^k\nabla\phi)\cdot\nabla^k\phi dx-\int_{\mathbb{R}^3}u\cdot\nabla^k\nabla\phi\cdot\nabla^k\phi dx\nonumber\\
			&\qquad +\int_{\mathbb{R}^3}(\nabla^k(u\cdot\nabla V)-u\cdot\nabla^k\nabla V)\cdot\nabla^k\phi dx+\int_{\mathbb{R}^3}u\cdot\nabla^k\nabla V\cdot\nabla^k\phi dx.
		\end{align*}
		By the communicator estimate in Lemma \ref{lema2}, we find
		\begin{align}\label{010701}
		\begin{aligned}
			&-\int_{\mathbb{R}^3}(\nabla^k(u\cdot\nabla\phi)-u\cdot\nabla^k\nabla\phi)\cdot\nabla^k\phi dx-\int_{\mathbb{R}^3}u\cdot\nabla^k\nabla\phi\cdot\nabla^k\phi dx \\
			&\quad  \leq \|\nabla^k(u\cdot\nabla\phi)-u\cdot\nabla^k\nabla\phi\|_{L^2}\|\nabla^k\phi\|_{L^2}
			+C\|\text{div}u\|_{L^\infty}\|\nabla^k\phi\|_{L^2}^2 \\
			&\quad  \leq C(\|\nabla u\|_{L^\infty}\|\nabla^k\phi\|_{L^2}+\|\nabla^ku\|_{L^2}\|\nabla\phi\|_{L^\infty})\|\nabla^k\phi\|_{L^2}+C\|\nabla^2u\|_{H^1}\|\nabla^k\phi\|_{L^2}^2 \\
			&\quad  \leq C(\|\nabla^2 u\|_{H^1}\|\nabla^k\phi\|_{L^2}+\|\nabla^ku\|_{L^2}\|\nabla^2\phi\|_{H^1})\|\nabla^k\phi\|_{L^2}+C\|\nabla^2u\|_{H^1}\|\nabla^k\phi\|_{L^2}^2 \\
			&\quad  \leq CE(t)D(t)^2.
			\end{aligned}
		\end{align}	
		Similarly, we also obtain for $k=1,2,3$ 
		\begin{align*}
			&\int_{\mathbb{R}^3}(\nabla^k(u\cdot\nabla V)-u\cdot\nabla^k\nabla V)\cdot\nabla^k\phi dx+\int_{\mathbb{R}^3}u\cdot\nabla^k\nabla V\cdot\nabla^k\phi dx\nonumber\\
			&\quad \leq C(\|\nabla u\|_{L^\infty}\|\nabla^kV\|_{L^2}+\|\nabla^ku\|_{L^2}\|\nabla V\|_{L^\infty})\|\nabla^k\phi\|_{L^2}+C\|u\|_{L^\infty}\|\nabla^{k+1}V\|_{L^2}\|\nabla^k\phi\|_{L^2}\nonumber\\
			&\quad \leq C(\|\nabla^2 u\|_{H^1}\|\nabla^kV\|_{L^2}+\|\nabla^ku\|_{L^2}\|\nabla^2 V\|_{H^1})\|\nabla^k\phi\|_{L^2}+C\|\nabla u\|_{H^1}\|\nabla^{k+1}V\|_{L^2}\|\nabla^k\phi\|_{L^2}\nonumber\\
			&\quad \leq C\delta_0 D(t)^2.
		\end{align*}
This together with \eqref{010701} deduces
		\begin{align*}
			\int_{\mathbb{R}^3}\nabla^k(-u\cdot\nabla\phi+ u\cdot\nabla V)\cdot\nabla^k\phi dx
			\leq CE(t)D(t)^2+C\delta_0 D(t)^2.
		\end{align*}
The nonlinear convection term can be estimated as
		\begin{align*}
			\int_{\mathbb{R}^3}\nabla^k(-u\cdot\nabla u)\cdot \nabla^k udx\leq 
			CE(t)D(t)^2.
		\end{align*}
		Therefore we conclude from above that 
		\begin{align*}
			\frac{1}{2}\frac{d}{dt}\|\nabla^k(\phi, u)\|_{L^2}^2+\gamma\|\nabla^k u\|_{L^2}^2
			\leq CE(t)D(t)^2+C\delta_0 D(t)^2.
		\end{align*}	
Summing over $k=1,2,3$ and combining with \eqref{010705} yields \eqref{010703}.
	\end{proof}
	
%
%
%
%
%
%
%
%
\subsubsection{Hypocoercivity-type estimate}
We now establish a dissipation mechanism for the density fluctuation, which is essential for controlling the coupled dynamics of $(\phi,u)$ and closing the global energy estimates.
To this end, we introduce a mixed energy functional incorporating carefully chosen cross terms between the velocity and density variables.
This hypocoercivity-type argument recovers dissipation for $\phi$ from the pressure-velocity coupling, which is not directly provided by the damping term.
	\begin{lemma}\label{p2}
		Let $T>0$ and $(\phi, u)$ be a classical solution of system \eqref{Main31} satisfying the a priori bound \eqref{030802}. Then there exist positive constants $C$ and $\tilde{C}_*$ independent of time such that for $0<t\leq T$ and  $k=0,1,2$
		\begin{align}\label{010704}
			\frac{d}{dt} \sum_{k=0}^2 \int_{\R^3} \nabla^k u \cdot \nabla^{k+1}\phi\,dx +\frac{1}{2}\|\nabla \phi\|_{H^2}^2
			\leq CE(t)D(t)^2+C\delta_0 D(t)^2+\tilde{C}_*\| u\|_{H^3}^2.
		\end{align}
	\end{lemma}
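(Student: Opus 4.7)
The idea is the standard hypocoercivity construction: while the momentum equation directly dissipates only $u$ through the damping $\gamma u$, the pressure coupling $\nabla\phi$ allows the cross term $\int \nabla^k u\cdot\nabla^{k+1}\phi\,dx$ to transfer dissipation onto $\phi$. I would first differentiate this functional in time and use \eqref{Main31} to substitute $\partial_t u=-\nabla\phi-\gamma u+f_2$ and $\partial_t\phi=-\mathrm{div}\,u+f_1$, obtaining for each $k=0,1,2$
\begin{align*}
\frac{d}{dt}\int_{\R^3}\nabla^k u\cdot\nabla^{k+1}\phi\,dx
&=-\|\nabla^{k+1}\phi\|_{L^2}^2-\gamma\int_{\R^3}\nabla^k u\cdot\nabla^{k+1}\phi\,dx \\
&\quad +\int_{\R^3}\nabla^k f_2\cdot\nabla^{k+1}\phi\,dx-\int_{\R^3}\nabla^k u\cdot\nabla^{k+1}\mathrm{div}\,u\,dx \\
&\quad +\int_{\R^3}\nabla^k u\cdot\nabla^{k+1}f_1\,dx.
\end{align*}
The first term on the right is exactly the coercive piece that, after summation in $k$, produces $\|\nabla\phi\|_{H^2}^2$; the remaining four are to be either absorbed or bounded by the right-hand side of \eqref{010704}.

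Next, I would estimate each leftover term in turn. For the damping contribution Young's inequality gives $\gamma|\int\nabla^k u\cdot\nabla^{k+1}\phi\,dx|\le \tfrac{1}{8}\|\nabla^{k+1}\phi\|_{L^2}^2+C\gamma^2\|\nabla^k u\|_{L^2}^2$, the second piece feeding $\tilde{C}_*\|u\|_{H^3}^2$. The $\mathrm{div}\,u$ integral, after integration by parts, becomes $\int\nabla^{k+1}u\cdot\nabla^k\mathrm{div}\,u\,dx$ and is estimated by $\|\nabla^{k+1}u\|_{L^2}^2\lesssim\|u\|_{H^3}^2$ for $k\le 2$, again contributing to $\tilde{C}_*\|u\|_{H^3}^2$. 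For the two nonlinear integrals I would apply Lemma \ref{lema2} together with the Sobolev embeddings $H^2\hookrightarrow L^\infty$ and $H^1\hookrightarrow L^6$ to obtain $\|f_2\|_{H^2}\lesssim\|u\|_{H^3}\|\nabla u\|_{H^2}\lesssim E(t)D(t)$ and, for $f_1=-u\cdot\nabla\phi+u\cdot\nabla V$, $\|f_1\|_{H^2}\lesssim\|u\|_{H^3}\|\nabla\phi\|_{H^2}+\|u\|_{H^3}\|\nabla V\|_{H^2}\lesssim E(t)D(t)+\delta_0 D(t)$. Applying Cauchy–Schwarz to the $f_2$ integral and integrating by parts once in the $f_1$ integral to shift the highest derivative onto $u$, these terms contribute $CE(t)D(t)^2+C\delta_0 D(t)^2$ together with further absorbable fractions $\tfrac{1}{8}\|\nabla^{k+1}\phi\|_{L^2}^2$.

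Summing over $k=0,1,2$ collapses the coercive terms into $\|\nabla\phi\|_{H^2}^2$, while the Young-type remainders produce at most $\tfrac{1}{4}\|\nabla\phi\|_{H^2}^2$ and are absorbed back, leaving $\tfrac{1}{2}\|\nabla\phi\|_{H^2}^2$ on the left-hand side, which is precisely \eqref{010704}. I do not expect any genuine obstacle beyond careful bookkeeping; the one delicate point is to verify, via the commutator expansion of Lemma \ref{lema2}, that every distribution of derivatives arising from $\nabla^k(u\cdot\nabla V)$ either carries an explicit factor of $\|\nabla V\|_{H^3}\le\delta_0$ or pairs $\|\nabla^k u\|_{L^2}$ with $\|\nabla^2 V\|_{H^1}$, so that no $\delta_0$-free error term survives. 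This mirrors the structure already exploited in the proof of Lemma \ref{l1}.
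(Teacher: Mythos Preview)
Your proposal is correct and follows essentially the same hypocoercivity strategy as the paper: differentiate the cross term $\int\nabla^k u\cdot\nabla^{k+1}\phi\,dx$, extract the coercive piece $-\|\nabla^{k+1}\phi\|_{L^2}^2$, absorb the damping and $\mathrm{div}\,u$ contributions into $\tilde{C}_*\|u\|_{H^3}^2$ via Young's inequality and integration by parts, and handle $f_1,f_2$ by Sobolev product estimates; the paper carries out $k=0$ explicitly (obtaining $\tilde{C}_1=1+\tfrac{1}{2}\gamma^2$) and then sums the analogous $k=1,2$ inequalities. The only cosmetic difference is that the paper estimates the nonlinear integrals as triple products (e.g., $\|u\|_{L^6}\|\nabla u\|_{L^2}\|\nabla\phi\|_{L^3}\lesssim E(t)D(t)^2$) rather than via the intermediate bounds $\|f_i\|_{H^2}\lesssim E(t)D(t)+\delta_0 D(t)$, but both routes yield the same right-hand side.
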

	\begin{proof}
		Taking $L^2$ energy estimate on \eqref{Main31} with $\nabla \phi$, we get
		\begin{align*}
			&\frac{d}{dt} \int_{\R^3} u \cdot \nabla\phi\,dx+\|\nabla\phi\|_{L^2}^2
			\cr
			&\quad = \int_{\R^3} u \cdot \nabla(-\text{div}u+f_1)\,dx - \gamma \int_{\R^3} u\cdot \nabla\phi\,dx -  \int_{\R^3} (u\cdot\nabla u) \cdot \nabla\phi\,dx  \\
			&\quad  \leq  \|\text{div}u\|_{L^2}^2+ \int_{\R^3} u \cdot \nabla f_1\,dx + \gamma \|u\|_{L^2}\|\nabla\phi\|_{L^2}
			+\|u\|_{L^6}\|\nabla u\|_{L^2}\|\nabla\phi\|_{L^3}\nonumber\\
			&\quad \leq \|\nabla u\|_{L^2}^2+\frac{1}{2}\|\nabla \phi\|_{L^2}^2+\frac{1}{2}\gamma^2\|u\|_{L^2}^2			+C\|\nabla u\|_{L^2}^2\|\nabla\phi\|_{H^1}\nonumber\\
			&\quad \leq \tilde{C}_1\|u\|_{H^1}^2+\frac{1}{2}\|\nabla\phi\|_{L^2}^2+CE(t)D(t)^2,
		\end{align*}
		where the positive constant $\tilde{C}_1=1+\frac{1}{2}\gamma^2$ and we used  
		\begin{align*}
			\int_{\R^3} u \cdot \nabla f_1\,dx &= - \int_{\R^3} \text{div}u \cdot f_1\,dx \cr
			&=-\int_{\R^3} \text{div}u \cdot (-u\cdot\nabla\phi+u\cdot\nabla V)\,dx \\	
			&\leq \|\text{div}u\|_{L^2}(\|u\|_{L^6}\|\nabla\phi\|_{L^3}+\|u\|_{L^3}\|\nabla  V \|_{L^6})\nonumber\\
			&\leq C\|\nabla u\|_{L^2}(\|\nabla u\|_{L^2}\|\nabla \phi\|_{H^1}+\|u\|_{H^1}\|\nabla^2V\|_{L^2})\nonumber\\
			&\leq C\|\nabla u\|_{L^2}(\|\nabla u\|_{L^2}\|\nabla \phi\|_{H^1}+\delta_0 \|u\|_{H^1})\nonumber\\
			&\leq CE(t)D(t)^2+C\delta_0 D(t)^2.
		\end{align*}
Combining these estimates gives
		\begin{align*}
			\frac{d}{dt} \int_{\R^3} u \cdot \nabla\phi\,dx + \frac{1}{2}\|\nabla\phi\|_{L^2}^2
			\leq CE(t)D(t)^2+C\delta_0 D(t)^2+\tilde{C}_1\|u\|_{H^1}^2.
		\end{align*}

Repeating the argument for higher derivatives introduces similar commutator terms, which can be controlled by the same techniques. Thus, for each $k=1,2$, there exist positive constants $\tilde{C}_2$ and $\tilde{C}_3$ such that
		\begin{align*}
			\frac{d}{dt}\int_{\mathbb{R}^3}\nabla^k u\cdot\nabla^{k+1}\phi dx+\frac{1}{2}\|\nabla^{k+1}\phi\|_{L^2}^2
			\leq CE(t)D(t)^2+C\delta_0 D(t)^2+ (\tilde{C}_2+\tilde{C}_3)\|\nabla^{k+1} u\|_{H^1}^2.
		\end{align*}
	
 		Summing over $k=0,1,2$ and setting $\tilde{C}_* = \tilde{C}_1 + \tilde{C}_2 + \tilde{C}_3$, we obtain \eqref{010704}. This completes the proof.
	\end{proof}
		\vskip 3mm	
	%
	%
	%
	%
	%
	%
	%
	%
\subsubsection{Proof of Theorem \ref{thm3}: global well-posedness.} 
We now close the bootstrap argument and establish the global-in-time well-posedness of classical solutions to \eqref{Main2}. 
The key step is to combine the basic high-order energy estimate in Lemma \ref{l1} with the cross-term estimate in Lemma \ref{p2}, which provides the missing dissipation for the density variable and allows us to recover full control over both velocity and density fluctuations.
 	
	Adding \eqref{010703} and $\kappa_1 \times$\eqref{010704} for a suitably small $\kappa_1>0$ yields
	\begin{equation}\label{030814}
		\begin{aligned}
			& \frac{d}{d t}\left(\|(\phi,u)\|_{H^3}^2+\kappa_1\sum_{k=0}^2\int_{\R^3} \nabla^k u \cdot \nabla^{k+1}\phi\,dx \right)+\gamma \|u\|_{H^3}^2+\frac{1}{2} \kappa_1 \|\nabla \phi\|_{H^2}^2 \\
			&\quad \leq C\kappa_1 E(t)D(t)^2+C\delta_0 \kappa_1D(t)^2+\kappa_1 \tilde{C}_*\|u\|_{H^3}^2
			+CE(t)D(t)^2+C\delta_0 D(t)^2.
		\end{aligned}
	\end{equation}
By taking $\kappa_1$ sufficiently small, we find 
	\begin{align*}
		\|(\phi,u)(t)\|_{H^3}^2+\kappa_1\sum_{k=0}^2\int_{\R^3} \nabla^k u \cdot \nabla^{k+1}\phi\,dx \sim \|(\phi, u)(t)\|_{H^3}^2
	\end{align*}
	Combining this with \eqref{030802} and \eqref{030814} shows 
	\begin{align*}
		\frac{d}{d t}\|(\phi, u)\|_{H^3}^2
		+\tilde{C}_4(\|\nabla \phi\|_{H^2}^2+\|u\|_{H^3}^2)\leq CE(t)D(t)^2+C\delta_0 D(t)^2\leq C(\delta+\delta_0)D(t)^2,
	\end{align*}
	where $\tilde{C}_4>0$ is a constant independent of time. Moreover, due to the smallness of $\delta_0$ and $\delta$, the right-hand side becomes negligible compared to the dissipation, giving
	\begin{align}\label{A3}
		\frac{d}{d t}\|(\phi, u)\|_{H^3}^2
		+\frac{1}{2}\tilde{C}_4(\|\nabla \phi\|_{H^2}^2+\|u\|_{H^3}^2)\leq 0.
	\end{align}
Integrating \eqref{A3} over $[0,t]$ and using the smallness of the initial data,
	\begin{align*}
		E(t)^2+\frac{1}{2}\tilde{C}_4 \int_0^tD(\tau)^2d \tau  \leq C\left\|\left(\phi_0, u_0\right)\right\|_{H^3}^2\leq C\delta_0^2\leq \frac{1}{4}\delta^2,
	\end{align*}
	which closes the \emph{a priori} bound \eqref{030802}. 
	
	Thus, by combining the local existence result (Theorem \ref{lem-loc}) with the continuation argument, we obtain the global existence and uniqueness of classical solutions to \eqref{Main2}, together with the energy bound \eqref{main-est}.

	%
	%
	%
	%
	%
	%
	%
	%
	%
	%

\subsection{Large-time decay of solutions}	
We next analyze the large-time behavior of solutions to the Euler system with damping, aiming to rigorously establish the decay rates stated in Theorem \ref{thm3}.
The key observation is that the damping term $\gamma u$ induces strong dissipation, which dominates the dynamics at large times. However, due to the hyperbolic structure of the underlying Euler system, the decay mechanism is frequency-dependent: low-frequency modes decay diffusively, while high-frequency modes experience rapid exponential damping. To capture this dichotomy, we employ a low-high frequency decomposition combined with spectral analysis of the linearized operator.

Specifically, we decompose the solution $W=(\phi,u)^{\top}$ into low- and high-frequency parts:	\begin{align}\label{011404}
		W(x,t)=W^{\ell}(x,t)+W^{h}(x,t)=:(\phi^{\ell},u^{\ell})+(\phi^{h},u^{h}),
	\end{align}
	where $W^{\ell}(x,t):=\mathcal{K}_1W(x,t)$ denotes the low-frequency component and $W^{h}(x,t):=\mathcal{K}_{\infty}W(x,t)$ represents the high-frequency component, as introduced in  \eqref{THZ5}. This decomposition allows us to separately treat the diffusive low-frequency modes and the strongly damped high-frequency modes, leading to sharp decay estimates. The detailed spectral analysis and sharp decay rates for the linearized system,
which play a key role in proving the optimality of the nonlinear decay rates,
will be carried out in Section \ref{sec:opt-decay}.
	
	%
	%
	%
	%
	%
\subsubsection{Spectral analysis of the linearized system.} To rigorously justify the decay mechanism described above, we first analyze the spectral structure of the linearized operator. 
This step clarifies how the damping and hyperbolic transport interact across different frequency regimes and provides the foundation for deriving precise pointwise and energy decay estimates.

Let $W=(\phi,u)^{\top}$ and $W_0=(\phi_0,u_0)^{\top}$. The reformulated Euler system with damping, \eqref{Main31}-\eqref{ID1}, can be expressed in compact vector form:
	\begin{equation}\label{L-system-M}
		\left\{
		\begin{aligned}
			& \partial_t W +\mathcal{A}W = F,\\
			& W(x,0) = W_0,
		\end{aligned}
		\right.
	\end{equation}
	where  $\mathcal{A}$ and $F$ are given by
	\begin{equation*}
		\mathcal{A} = \left( {\begin{array}{*{20}{c}}
				{0}&{{\rm{div}}}\\
				{\nabla}&\gamma{I}
		\end{array}} \right)
		\quad \mathrm{and}  \quad
	F=\left(
		\begin{array}{*{20}{c}}
			{-u\cdot \nabla \phi+u\cdot\nabla V}\\
			{-u\cdot \nabla u}
		\end{array} \right)
		=:\left(
		\begin{array}{*{20}{c}}
			{f_1}\\
			{f_2}
		\end{array} \right).
	\end{equation*}
Denoting by $G(x,t)$ the Green function of the linearized system, the Duhamel representation of the solution is
\[
		W(x,t)=G\ast W_0 +\int^t_0 G(t-\tau)\ast F(\tau)\mathrm{d}\tau,
\]
	where \begin{equation*}
		G(x,t) = \left(
		{\begin{array}{*{20}{c}}
				{G_{11}}&{G_{12}}\\
				{G_{21}}&{G_{22}}
		\end{array}} \right).
	\end{equation*}
Although the structure of $G$ has been studied in \cite{STW2003}, for the sake of completeness and to adapt it to our present framework, we briefly recall its key properties and provide a self-contained derivation of the spectral estimates.
	
	The Green function $G$ satisfies 
	\begin{equation*}
		\left\{
		\begin{aligned}
			& \partial_t G +\mathcal{A}G = 0,\\
			& G(x,t) = \delta(x)I,
		\end{aligned}
		\right.
	\end{equation*}
	where $\delta(x)$ is the Dirac delta distribution. Taking the Fourier transform of the above system gives
	\begin{equation*}
		\left\{
		\begin{aligned}
			& \partial_t \hat{G} +\hat{\mathcal{A}}\hat{G} = 0,\\
			& \hat{G}(x,0) = I,
		\end{aligned}
		\right.
	\end{equation*}
with
	\begin{equation*}
		\hat{\mathcal{A}} = \left( {\begin{array}{*{20}{c}}
				{0}&{i\xi^{\top}}\\
				{i\xi}&\gamma {I}
		\end{array}} \right).
	\end{equation*}
The decay properties of $G$ are governed by the eigenvalues of $\hat{\mathcal{A}}$, which satisfy
	\begin{equation*}
		|\lambda I+\hat{\mathcal{A}}|{\rm{ = }}\left| {\begin{array}{*{20}{c}}
				{\lambda}&{i\xi}\\
				{i\xi}&({\lambda+\gamma})I\\
		\end{array}} \right|
		=(\lambda+\gamma)^2(\lambda^2+\gamma \lambda+|\xi|^2)=0.
	\end{equation*}
Thus, the eigenvalues are
	\begin{equation*}
		\lambda_1=\lambda_2=-\gamma,
		\quad \lambda_3=\frac{-\gamma+\sqrt{\gamma^2-4|\xi|^2}}{2}, \quad \mathrm{and}
		\quad \lambda_4=\frac{-\gamma-\sqrt{\gamma^2-4|\xi|^2}}{2}.
	\end{equation*}
	
	The following lemma describes the behavior of the eigenvalues $\lambda_3$ and $\lambda_4$ in the low- and high-frequency regimes.
	\begin{lemma}\label{lem-tezheng}
		Let $r_0<\frac{\gamma}{2}$ be a small positive constant. 
		\begin{itemize}
			\item For  $|\xi|< r_0$, the eigenvalues admit the expansions
			\begin{equation*}
				\lambda_3=-\frac{1}{\gamma}|\xi|^2+o(|\xi|^2),
				\quad \lambda_4=-\gamma+\frac{1}{\gamma}|\xi|^2+o(|\xi|^2).
			\end{equation*}
			which shows the diffusive decay of the low-frequency mode $\lambda_3$.
			\item For $|\xi|\geq r_0$, there exists a positive constant $\eta$ such that
\[
				{\rm{Re}}\lambda_i\leq -\eta, \quad i=3, 4.
\]
		\end{itemize}
	\end{lemma}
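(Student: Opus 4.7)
The plan is to treat the two regimes separately, working directly with the explicit formulas
\[
\lambda_3 = \frac{-\gamma + \sqrt{\gamma^2 - 4|\xi|^2}}{2}, \qquad \lambda_4 = \frac{-\gamma - \sqrt{\gamma^2 - 4|\xi|^2}}{2},
\]
since the behaviour of $\lambda_1 = \lambda_2 = -\gamma$ is already trivial and uniformly dissipative.

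For the low-frequency regime $|\xi| < r_0$ with $r_0 < \gamma/2$, I would rewrite the radical as $\sqrt{\gamma^2 - 4|\xi|^2} = \gamma \sqrt{1 - 4|\xi|^2/\gamma^2}$, which is real since $4|\xi|^2/\gamma^2 < 1$. A Taylor expansion of $\sqrt{1-y}$ around $y=0$ with $y = 4|\xi|^2/\gamma^2 = o(1)$ yields
\[
\sqrt{\gamma^2 - 4|\xi|^2} = \gamma - \frac{2|\xi|^2}{\gamma} + o(|\xi|^2).
\]
Plugging this expansion into the formulas for $\lambda_3$ and $\lambda_4$ gives exactly the claimed asymptotics $\lambda_3 = -|\xi|^2/\gamma + o(|\xi|^2)$ and $\lambda_4 = -\gamma + |\xi|^2/\gamma + o(|\xi|^2)$. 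This part is purely algebraic and presents no real obstacle.

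For the high-frequency regime $|\xi| \ge r_0$, I would split further according to the sign of the discriminant. When $|\xi| \ge \gamma/2$, the quantity $\gamma^2 - 4|\xi|^2 \le 0$ is non-positive, so $\lambda_3$ and $\lambda_4$ form a complex conjugate pair with common real part $-\gamma/2$, giving $\operatorname{Re}\lambda_i = -\gamma/2$. When $r_0 \le |\xi| < \gamma/2$, both eigenvalues are real and negative, with $\lambda_4 \le -\gamma/2 < 0$ automatically, and for $\lambda_3$ I would use the rationalisation
\[
\lambda_3 = \frac{-\gamma + \sqrt{\gamma^2 - 4|\xi|^2}}{2} = \frac{-2|\xi|^2}{\gamma + \sqrt{\gamma^2 - 4|\xi|^2}} \le -\frac{r_0^2}{\gamma},
\]
which provides the required uniform gap. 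Setting $\eta := \min\{r_0^2/\gamma,\, \gamma/2\} > 0$ then covers both sub-cases simultaneously.

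The main technical point, though not a deep obstacle, is ensuring that the two sub-cases in the high-frequency regime patch together consistently: the bound $\lambda_3 \le -r_0^2/\gamma$ must be compared with the value $-\gamma/2$ attained in the complex regime, and one must check that the transition at $|\xi| = \gamma/2$ is handled without loss of uniformity. Once the constant $\eta$ is chosen as above, the conclusion $\operatorname{Re}\lambda_i \le -\eta$ for $i=3,4$ and all $|\xi| \ge r_0$ follows. This dichotomy—diffusive decay at low frequencies (encoded in the $-|\xi|^2/\gamma$ expansion of $\lambda_3$) and uniform exponential damping at high frequencies—is exactly what will feed into the Green-function decay estimates used later.
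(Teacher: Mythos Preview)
Your proposal is correct and follows essentially the same approach as the paper: Taylor expansion of the square root for the low-frequency asymptotics, and for high frequencies the same case split at $|\xi|=\gamma/2$ (complex conjugate pair with real part $-\gamma/2$ versus the rationalisation $\lambda_3 = -2|\xi|^2/(\gamma+\sqrt{\gamma^2-4|\xi|^2})\le -r_0^2/\gamma$), yielding the identical constant $\eta=\min\{r_0^2/\gamma,\gamma/2\}$.
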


	\begin{proof}
		For $|\xi|< r_0$, the expansions follow by a direct Taylor expansion of the square root in $\lambda_{3,4}$.
		For $|\xi|> \frac{\gamma}{2}$, the roots $\lambda_3$ and $\lambda_4$ are complex with
		\begin{equation*}
			{\rm{Re}}\lambda_3={\rm{Re}}\lambda_4=-\frac{\gamma}{2}.
		\end{equation*}
		For $r_0\leq|\xi|\leq \frac{\gamma}{2}$, the roots are real, and a straightforward estimate gives
		\begin{align*}
			\lambda_3&=\frac{-\gamma+\sqrt{\gamma^2-4|\xi|^2}}{2}
			=\frac{-2|\xi|^2}{\gamma+\sqrt{\gamma^2-4|\xi|^2}}
			\leq -\frac{1}{\gamma} |\xi|^2
			\leq -\frac{1}{\gamma} r_0^2,\\
			\lambda_4&=\frac{-\gamma-\sqrt{\gamma^2-4|\xi|^2}}{2}
			\leq -\frac{\gamma}{2}.
		\end{align*}
Thus, letting $ \eta=\min\{\frac{1}{\gamma} r_0^2, \frac{\gamma}{2}\}$ completes the proof.
	\end{proof}
	
The eigenvalue analysis in Lemma \ref{lem-tezheng} immediately translates into decay properties of the Green function for the linearized system. In particular, the low-frequency modes behave diffusively (governed by a heat kernel), while the high-frequency modes decay exponentially. This allows us to obtain the following sharp $L^2$-decay estimates for each component of the Green function and their low-frequency parts.

	\begin{lemma}\label{gl-lam}
	Let $\phi$ be a smooth function. Then for any integer $k\geq 0$, the Green function components satisfy
		\begin{align}
			&\|\nabla^k G_{11}\ast \phi\|_{L^2}
			\leq C(1+t)^{-\frac{3}{4}-\frac{k}{2}}(\|\phi\|_{L^1}
			+\|\nabla^k\phi\|_{L^2}), \label{G11}\\
			&\|\nabla^k G_{12}\ast \phi\|_{L^2}
			\leq C(1+t)^{-\frac{5}{4}-\frac{k}{2}}(\|\phi\|_{L^1}
			+\|\nabla^k\phi\|_{L^2}),  \label{G12}\\
			&\|\nabla^k G_{21}\ast \phi\|_{L^2}
			\leq C(1+t)^{-\frac{5}{4}-\frac{k}{2}}(\|\phi\|_{L^1}
			+\|\nabla^k\phi\|_{L^2}),  \label{G21}\\
			&\|\nabla^k G_{22}\ast \phi\|_{L^2}
			\leq C(1+t)^{-\frac{7}{4}-\frac{k}{2}}(\|\phi\|_{L^1}
			+\|\nabla^k\phi\|_{L^2}).\label{G22}
		\end{align}
		In particular, for the low-frequency part of $G_{ij}$, $i,j=1,2$, we have
		\begin{align}
			&\|\nabla^k G_{11}^{\ell}\ast \phi\|_{L^2}\leq C(1+t)^{-\frac{3}{4}-\frac{k}{2}}\|\phi\|_{L^1},\label{GL11}\\
			&\|\nabla^k G_{12}^{\ell}\ast \phi\|_{L^2}\leq C(1+t)^{-\frac{5}{4}-\frac{k}{2}}\|\phi\|_{L^1}, \label{GL12}\\
			&\|\nabla^k G_{21}^{\ell}\ast \phi\|_{L^2}\leq C(1+t)^{-\frac{5}{4}-\frac{k}{2}}\|\phi\|_{L^1}, \label{GL21}\\
			&\|\nabla^k G_{22}^{\ell}\ast \phi\|_{L^2}\leq C(1+t)^{-\frac{7}{4}-\frac{k}{2}}\|\phi\|_{L^1}.\label{GL22}
		\end{align}
	\end{lemma}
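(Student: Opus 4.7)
The plan is to carry out the analysis on the Fourier side using the spectral decomposition of $\hat{\mathcal{A}}(\xi)$. First I would decompose $\hat u$ into its component parallel to $\xi$ (a scalar $\hat v = (\xi/|\xi|)\cdot\hat u$) and two perpendicular components. The perpendicular part decouples from $\hat\phi$ and obeys $\partial_t \hat u^\perp = -\gamma\hat u^\perp$, contributing only a harmless $e^{-\gamma t}$ factor. The coupled $(\hat\phi,\hat v)$ subsystem is governed by a $2\times 2$ matrix whose eigenvalues of its negative are precisely $\lambda_3,\lambda_4$ from Lemma \ref{lem-tezheng}. Applying the standard $2\times 2$ matrix exponential formula and undoing the parallel/perpendicular splitting yields explicit Fourier symbols such as
\[
\hat G_{11}(\xi,t) = \frac{\lambda_4 e^{\lambda_3 t}-\lambda_3 e^{\lambda_4 t}}{\lambda_4-\lambda_3}, \qquad \hat G_{12}(\xi,t) = \frac{i\xi^\top(e^{\lambda_3 t}-e^{\lambda_4 t})}{\lambda_4-\lambda_3},
\]
together with analogous expressions for $\hat G_{21},\hat G_{22}$. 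The decisive structural feature is that the $i\xi$ prefactors in $\hat G_{12},\hat G_{21}$ and the cancellation $\lambda_3\lambda_4=|\xi|^2$ produce a $|\xi|^2$ factor inside the parallel component of $\hat G_{22}$, which is precisely the source of the improved rates $(1+t)^{-5/4-k/2}$ and $(1+t)^{-7/4-k/2}$.

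Next I would split each symbol into low-frequency ($|\xi|<r_0$) and high-frequency parts. On the low-frequency region, the expansions $\lambda_3 = -|\xi|^2/\gamma + o(|\xi|^2)$, $\lambda_4 = -\gamma + O(|\xi|^2)$, and $\lambda_4-\lambda_3\sim -\gamma$ produce the pointwise bounds
\[
|\hat G_{11}^\ell|\lesssim e^{-c|\xi|^2 t}, \qquad |\hat G_{12}^\ell|,\,|\hat G_{21}^\ell|\lesssim|\xi|\,e^{-c|\xi|^2 t}, \qquad |\hat G_{22}^\ell|\lesssim |\xi|^2 e^{-c|\xi|^2 t}+e^{-\gamma t}.
\]
Combining Plancherel's identity with the Hausdorff--Young inequality $\|\hat\phi\|_{L^\infty}\leq C\|\phi\|_{L^1}$ reduces each low-frequency piece to a Gaussian moment integral of the form $\int_{|\xi|<r_0}|\xi|^{2(k+\alpha)}e^{-2c|\xi|^2 t}d\xi \sim (1+t)^{-3/2-k-\alpha}$, where $\alpha\in\{0,1,1,2\}$ encodes the extra $|\xi|$ factors for $G_{11},G_{12},G_{21},G_{22}$ respectively. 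This immediately produces the claimed algebraic rates, and the uniform $e^{-\gamma t}$ remainder in $\hat G_{22}^\ell$ is absorbed since it is majorized by $(1+t)^{-N}$ for every $N$. This step already yields the purely $L^1$-bounded estimates \eqref{GL11}--\eqref{GL22}.

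For the high-frequency part ($|\xi|\geq r_0$) I would use the uniform spectral gap $\operatorname{Re}\lambda_i\leq -\eta$ from Lemma \ref{lem-tezheng}, together with the uniform boundedness of the coefficients $\lambda_j/(\lambda_4-\lambda_3)$ and $|\xi|/(\lambda_4-\lambda_3)$, to conclude $|\hat G_{ij}^h(\xi,t)|\lesssim e^{-\eta t}$ for all four entries. Plancherel then gives $\|\nabla^k G_{ij}^h\ast\phi\|_{L^2}\lesssim e^{-\eta t}\|\nabla^k\phi\|_{L^2}$, which is dominated by the algebraic bounds above. Adding the low- and high-frequency contributions produces \eqref{G11}--\eqref{G22}. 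The main technical point I expect to require care is verifying that the apparent singularity in $(\lambda_4-\lambda_3)^{-1}$ at the transition point $|\xi|=\gamma/2$ does not invalidate the symbol bounds; this is resolved by the choice $r_0<\gamma/2$, which places the transition squarely inside the high-frequency regime where both eigenvalues are complex with common real part $-\gamma/2$ and the exponential bound applies uniformly.
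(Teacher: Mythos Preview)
Your proposal is correct and follows essentially the same route as the paper: compute the explicit Fourier symbols of $\hat G_{ij}$, split into low and high frequencies, and combine the heat-kernel-type bounds on $|\xi|<r_0$ (via Hausdorff--Young plus Gaussian moments, which is exactly the paper's use of the heat kernel $H$) with the uniform spectral-gap bound $\mathrm{Re}\,\lambda_i\le-\eta$ on $|\xi|\ge r_0$. Your treatment of the apparent singularity at $|\xi|=\gamma/2$ is slightly imprecise---the individual coefficients $\lambda_j/(\lambda_4-\lambda_3)$ do blow up there, but the full symbols remain bounded (up to a harmless $(1+t)$ factor absorbed into $e^{-\eta t}$) because $e^{-\hat{\mathcal A}t}$ is analytic in $\xi$; the paper itself simply asserts the high-frequency bound without comment.
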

	\begin{proof}
By direct computation, the Fourier transform of the Green function admits the explicit representation
		\begin{align}\label{01001}
			\hat{G}(t,\xi)
			= \left( {\begin{array}{*{20}{c}}
					{\hat{G}_{11}}&{\hat{G}_{12}}\\
					{\hat{G}_{21}}&{\hat{G}_{22}}
			\end{array}} \right)
			=\left( {\begin{array}{*{20}{c}}
					{\frac{\lambda_3 e^{\lambda_4t}
							-\lambda_4e^{\lambda_3t}}{\lambda_3-\lambda_4}}
					&{-i\xi^{\top}\frac{e^{\lambda_3 t}
							-e^{\lambda_4 t}}{\lambda_3-\lambda_4}}\\
					{-i\xi\frac{e^{\lambda_3 t}
							-e^{\lambda_4t}}{\lambda_3-\lambda_4}}
					&{e^{-\gamma t}I
						+\big(\frac{\lambda_3 e^{\lambda_3 t}
							-\lambda_4e^{\lambda_4t}}{\lambda_3-\lambda_4}
						-e^{-\gamma t}\big)\frac{\xi\xi^{\top}}{|\xi|^2}}
			\end{array}} \right).
		\end{align}
For any smooth $\phi$, using Plancherel's identity, we obtain
		\begin{align*}
			\| \nabla^kG_{11}\ast \phi\|^2_{L^2}
			={}&C\|(i\xi)^{k}\hat{G}_{11}\hat{\phi}\|^2_{L^2}
			=C\left(\int_{|\xi|<r_0}
			+\int_{|\xi|\geq r_0}\right)|(i\xi)^{k}\hat{G}_{11}\hat{\phi}|^2\mathrm{d}\xi\\
			\leq{}& C\int_{|\xi|<r_0} |(i\xi)^{k}e^{-\frac{1}{\gamma}|\xi|^2t}\hat{\phi}|^2\mathrm{d}\xi
			+C\int_{|\xi|\geq r_0} |(i\xi)^{k}e^{-\eta t}\hat{\phi}|^2\mathrm{d}\xi\\
			\leq{}&C\|\widehat{\nabla^k H}\hat{\phi}\|_{L^2}^2
			+C\|e^{-\eta t}\widehat{\delta(x)}\widehat{\nabla^k \phi}\|^2_{L^2}\\
			\leq{}&C\|\nabla^k H\|_{L^2}^2\|\phi\|_{L^1}^2
			+C e^{-2\eta t}\|\nabla^k\phi\|_{L^2}^2   \\
			\leq{}&C(1+t)^{-\frac{3}{2}-k}(\|\phi\|_{L^1}^2+\|\nabla^k\phi\|_{L^2}^2),
		\end{align*}
		where $H(x,t)=(4\pi{t})^{-\frac{3}{2}}e^{-\frac{|x|^2}{4t}}$ is the heat kernel. This proves \eqref{G11}.  The estimates \eqref{G12}-\eqref{G22} follow from similar arguments applied to the corresponding Fourier multipliers.

For the low-frequency parts, using the definition of the projection in \eqref{011404} and restricting to $|\xi|<r_0$, we estimate
		\begin{align*}
			\| \nabla^kG_{11}\ast \phi\|^2_{L^2}&=C\|(i\xi)^{k}\hat{G}_{11}\hat{\phi}\|^2_{L^2}\cr
			&=C\int_{|\xi|<r_0}|(i\xi)^{k}\hat{G}_{11}\hat{\phi}|^2\mathrm{d}\xi\\
			&\leq C\int_{|\xi|<r_0} |(i\xi)^{k}e^{-\frac{1}{\gamma}|\xi|^2t}\hat{\phi}|^2\mathrm{d}\xi\\
			&\leq C\|\widehat{\nabla^k H}\hat{\phi}\|_{L^2}^2\\
			&\leq C\|\nabla^k H\|_{L^2}^2\|\phi\|_{L^1}^2\\
			&\leq C(1+t)^{-\frac{3}{2}-k}\|\phi\|_{L^1}^2,
		\end{align*}
which gives \eqref{GL11}. The others \eqref{GL12}-\eqref{GL22} follow analogously.
This completes the proof.
	\end{proof}

%
%
%
%
%
%
%
%
%
%
%
\subsubsection{Time-weighted energy method for the nonlinear system}
We now derive the large-time decay rates for the full nonlinear system, completing the proof of Theorem \ref{thm3}.  Our approach combines the low-high frequency decomposition introduced above with a time-weighted energy method, which allows us to track the decay behavior of each derivative level and control nonlinear interactions over long times.

To quantify the time-decay behavior at different derivative levels, we introduce the basic energy 
\[
			\mathcal{E}_j(t)=\|\nabla^j \phi (t)\|_{H^{3-j}}^2+\|\nabla^j u(t)\|_{H^{3-j}}^2 \quad \mbox{for } j=0,1,2,3,
\]
		and define the time-weighted energy functional
		\begin{align}\label{mskt}
			Q(t)=\sup_{0<\tau\leq t}\Big\{
			(1+\tau)^{\frac{3}{4}}\|\phi(\tau)\|_{L^2}
			+(1+\tau)^{\frac{5}{4}}\|\nabla\phi(\tau)\|_{H^2}
			&+(1+\tau)^{\frac{5}{4}}\|u(\tau)\|_{H^3}\Big\}.
		\end{align}
The goal is to prove that $Q(t)$ remains uniformly bounded in time. By definition, the decay rates we aim to establish follow immediately once $Q(t)$ is controlled:
\[
			\|\phi(t)\|_{L^2}\leq (1+t)^{-\frac{3}{4}}Q(t),\quad \|\nabla\phi(t)\|_{H^2}\leq (1+t)^{-\frac{5}{4}}Q(t),\quad \|u(t)\|_{H^3}\leq (1+t)^{-\frac{5}{4}}Q(t).
\]

	\begin{lemma}\label{l68}
		Assume that all the conditions in Theorem \ref{thm3} hold. Then there exists a positive constant $C$ independent of time such that
				\begin{align}
			\|(\phi,u)(t)\|_{H^3}\leq C(1+t)^{-\frac{3}{4}}\left(\delta_0+ N_0+\delta_0 Q(t)\right).\label{011405}
		\end{align}
	\end{lemma}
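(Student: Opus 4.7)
The plan is to start from the Duhamel representation for the linearized problem \eqref{L-system-M},
\begin{equation*}
W(t)=G(t)\ast W_0+\int_0^t G(t-\tau)\ast F(\tau)\,d\tau,
\end{equation*}
and to estimate the $H^3$ norm of $W=(\phi,u)^\top$ by invoking the sharp Green function bounds \eqref{G11}--\eqref{GL22} of Lemma \ref{gl-lam}, coupled with the time-weighted functional $Q(t)$ defined in \eqref{mskt}. The constant $N_0$ should arise naturally as $\|W_0\|_{L^1}$. The decay rate $(1+t)^{-3/4}$ is the slowest one in Lemma \ref{gl-lam}, coming from $G_{11}$ acting on $\phi_0$ at $k=0$; all higher-derivative or velocity-block contributions decay strictly faster and will be absorbed at the end.

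First, I would handle the linear piece. Applying \eqref{G11}--\eqref{G22} with $k=0,1,2,3$ and summing, the slowest-decaying block is $G_{11}\ast\phi_0$ at $k=0$, so
\begin{equation*}
\|G(t)\ast W_0\|_{H^3}\le C(1+t)^{-3/4}\bigl(\|W_0\|_{L^1}+\|W_0\|_{H^3}\bigr)\le C(1+t)^{-3/4}(N_0+\delta_0),
\end{equation*}
using the smallness of the initial data from Theorem \ref{thm3}. This produces precisely the $\delta_0+N_0$ piece.

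Next, I would bound the nonlinear Duhamel term by splitting $\int_0^t=\int_0^{t/2}+\int_{t/2}^t$ and using Lemma \ref{gl-lam} on each slice. The main task is to control $\|F(\tau)\|_{L^1}$ and $\|\nabla^kF(\tau)\|_{L^2}$ in terms of $Q(\tau)$ and $\delta_0$. For $f_2=-u\cdot\nabla u$ and the transport part of $f_1=-u\cdot\nabla\phi$, H\"older and Sobolev embeddings together with the definition \eqref{mskt} yield, roughly,
\begin{equation*}
\|u\cdot\nabla u\|_{L^1}+\|u\cdot\nabla\phi\|_{L^1}\lesssim (1+\tau)^{-5/2}Q(\tau)^2,
\end{equation*}
and analogous $L^2$ bounds using the uniform $H^3$ bound $\|u\|_{H^3}\le C\delta_0$ (so that one factor is traded for $\delta_0$ and the other for $(1+\tau)^{-5/4}Q(\tau)$). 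For the potential coupling $u\cdot\nabla V$, I would use $\|\nabla V\|_{H^3}\le\delta_0$ and pair it with $Q(\tau)$-decay on $u$ to obtain
\begin{equation*}
\|u\cdot\nabla V\|_{L^1}+\|\nabla^k(u\cdot\nabla V)\|_{L^2}\lesssim \delta_0(1+\tau)^{-5/4}Q(\tau).
\end{equation*}
On $[0,t/2]$ the kernel is $\sim(1+t)^{-3/4-k/2}$ and the nonlinear integrand is time-integrable, producing a clean $(1+t)^{-3/4-k/2}$ factor. On $[t/2,t]$ the factor $(1+\tau)^{-5/4}$ integrates to $(1+t)^{-1/4}$ and pairs with the $(1+t-\tau)^{-3/4-k/2}$ kernel to reproduce the same target rate.

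Combining the two contributions and taking the supremum in derivative order, I would obtain
\begin{equation*}
\|(\phi,u)(t)\|_{H^3}\le C(1+t)^{-3/4}\bigl(\delta_0+N_0+\delta_0 Q(t)+Q(t)^2\bigr),
\end{equation*}
and then absorb $Q(t)^2$ into $\delta_0 Q(t)$ using the a priori smallness $Q(t)\lesssim\delta_0$ inherited from the energy estimate \eqref{main-est}. The main technical obstacle is the potential coupling $u\cdot\nabla V$: because $V$ is static and carries no time decay, every factor of $(1+\tau)^{-5/4}$ must come from the velocity $u$ through $Q(\tau)$, while the smallness of $\|\nabla V\|_{H^3}$ supplies the $\delta_0$. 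This redistribution of derivatives and decay is what produces the term $\delta_0 Q(t)$ on the right-hand side rather than a purely quadratic-in-$Q$ contribution, and it is also what prevents improving the base decay rate beyond $(1+t)^{-3/4}$.
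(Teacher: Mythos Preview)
Your direct Duhamel approach has a genuine loss-of-derivative problem at the top order $k=3$. To apply \eqref{G11}--\eqref{G22} at $k=3$ you must control $\|\nabla^3 F(\tau)\|_{L^2}$, but expanding $\nabla^3(u\cdot\nabla\phi)$ by Leibniz produces the term $u\cdot\nabla^4\phi$, and likewise $\nabla^3(u\cdot\nabla u)$ contains $u\cdot\nabla^4 u$; neither $\nabla^4\phi$ nor $\nabla^4 u$ is available since the solution lives only in $H^3$. The high-frequency part of the Green function carries no smoothing---the bound there is $e^{-\eta(t-\tau)}\|\nabla^k F\|_{L^2}$ with the derivatives sitting squarely on the source---so you cannot shift a derivative onto the kernel. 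Your time-splitting and the handling of $u\cdot\nabla V$ are fine in spirit, but the estimate simply does not close at $k=3$.

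The paper avoids this by \emph{not} applying Duhamel to the full $H^3$ norm. It first uses the energy inequality (essentially \eqref{A3}) together with $\|\phi^h\|_{L^2}\lesssim\|\nabla\phi\|_{L^2}$ to obtain
\[
\frac{d}{dt}\mathcal{E}_0(t)+\tilde{C}_5\,\mathcal{E}_0(t)\le C\|\phi^\ell(t)\|_{L^2}^2,
\]
where the dangerous top-order terms have already been absorbed via integration by parts inside the energy estimate (as in Lemma \ref{l1}). Duhamel is then invoked only for the \emph{low-frequency scalar} $\phi^\ell$, and by \eqref{GL11}--\eqref{GL12} this needs just $\|F(\tau)\|_{L^1}$---no derivatives on $F$, hence no loss. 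The $\delta_0 Q(t)$ factor arises from a Cauchy--Schwarz-in-time argument pairing $(1+\tau)^{-5/4}Q(t)$ with the dissipation bound $\int_0^t\|\nabla(\phi,u)\|_{L^2}^2\,d\tau\le C\delta_0^2$; this sidesteps your proposed absorption of $Q(t)^2$ into $\delta_0 Q(t)$, which would be circular since $Q(t)\lesssim\delta_0$ is not yet known. A final Gr\"onwall step then yields \eqref{011405}.
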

	\begin{proof}
For $\mathcal{E}_0(t)$, standard energy estimates yield
		\begin{equation*}
			\frac{d}{dt}\mathcal{E}_0(t)+C(\|\nabla\phi(t)\|_{H^2}^2+\|u(t)\|_{H^3}^2)\leq 0.
		\end{equation*}
To control the high-frequency part, we use Lemma \ref{lema6} which implies $\|\phi^{h}(t)\|_{L^2}\leq C \|\nabla\phi(t)\|_{L^2}$. Thus there exists $\tilde{C}_5 > 0$ such that
		\begin{equation}\label{lem-hs1}
			\frac{d}{dt}\mathcal{E}_0(t)+\tilde{C}_5\mathcal{E}_0(t)\leq C\|\phi^{\ell}(t)\|^2_{L^2}.
		\end{equation}
For the lower frequency part, we utilize the Duhamel representation:		
\[
			W^\ell(x,t)=G^\ell*U_0+\int_0^tG^\ell(t-\tau)*F(\tau)d\tau,
\]
where $F=(f_1,f_2)^\top$ with
		\begin{align*}
			f_1=-u\cdot \nabla \phi+u\cdot\nabla V, \quad f_2=-u\cdot \nabla u.
		\end{align*}
Using the Green function estimates from Lemma \ref{gl-lam} and H\"older's inequality, we obtain
		\begin{align*}
			&\int_0^t \big(\|G_{11}^\ell(t-\tau)*f_1(\tau)\|_{L^2} +\|G_{12}^\ell(t-\tau)*f_2(\tau)\|_{L^2}\big)\,d\tau\\
			&\quad \leq C\int_0^t(1+t-\tau)^{-\frac{3}{4}}\Big(\|(u\cdot \nabla\phi)(\tau)\|_{L^1}+\|(u\cdot\nabla V)(\tau)\|_{L^1}\Big)d\tau \\
			&\qquad +C\int_0^t(1+t-\tau)^{-\frac{5}{4}}\|(u\cdot\nabla u)(\tau)\|_{L^1}d\tau\\
			&\quad \leq C\int_0^t(1+t-\tau)^{-\frac{3}{4}}\Big(\|u\|_{L^2} \|\nabla\phi\|_{L^2}+\|u\|_{L^2}\|\nabla V\|_{L^2}\Big)d\tau \\
			&\qquad +C\int_0^t(1+t-\tau)^{-\frac{5}{4}}\|u\|_{L^2}\|\nabla u\|_{L^2}d\tau\\
			&\quad \leq C\left(\int_0^t(1+t-\tau)^{-\frac{3}{2}}\|u(\tau)\|_{L^2}^2d\tau\right)^{\frac{1}{2}}
			\left(\int_0^t\|\nabla\phi(\tau)\|_{L^2}^2d\tau\right)^{\frac{1}{2}}\\
			&\qquad +C\delta_0\int_0^t(1+t-\tau)^{-\frac{3}{4}} (1+\tau)^{-\frac{5}{4}}Q(t)d\tau\\
			&\qquad +C\left(\int_0^t(1+t-\tau)^{-\frac{5}{2}}\|u(\tau)\|_{L^2}^2d\tau\right)^{\frac{1}{2}}\left(\int_0^t\|\nabla u(\tau)\|_{L^2}^2d\tau\right)^{\frac{1}{2}}\\
			&\quad \leq C(1+t)^{-\frac{3}{4}}\delta_0Q(t),
		\end{align*}
where in the last step we used the energy dissipation bound
		\begin{equation}\label{D1}
			\int_0^t(\|\nabla \phi(\tau)\|_{L^2}^2+\|\nabla u(\tau)\|_{L^2}^2)d\tau\leq 
			C\delta_0^2.
		\end{equation}
		This yields
		\begin{align}\label{lem-hs3}
		\begin{aligned}
			\|\phi^{\ell}(t)\|_{L^2}
			&\leq \|G_{11}^\ell*\phi_0+G_{12}^\ell*u_0\|_{L^2}\cr
			&\quad +\int_0^t\|G_{11}^\ell(t-\tau)*f_1(\tau)+G_{12}^\ell(t-\tau)*f_2(\tau)\|_{L^2}d\tau\\
			&\leq C(1+t)^{-\frac{3}{4}}(\|\phi_0\|_{L^1}+\|u_0\|_{L^1})
			+C(1+t)^{-\frac{3}{4}}\delta_0Q(t)\\
			&\leq C(1+t)^{-\frac{3}{4}}( N_0+\delta_0Q(t)).
			\end{aligned}
		\end{align}
		Substituting \eqref{lem-hs3} into \eqref{lem-hs1}, we obtain
		\begin{equation*}
			\frac{d}{dt}\mathcal{E}_0(t)+\tilde{C}_5\mathcal{E}_0(t)\leq C(1+t)^{-\frac{3}{2}}\left( N_0+\delta_0Q(t)\right)^2.
		\end{equation*}
		Applying Gr\"{o}nwall's inequality gives
		\begin{align*}
			\mathcal{E}_0(t)=\|(\phi,u)(t)\|_{H^3}^2
			\leq C(1+t)^{-\frac{3}{2}}\left(\delta_0+ N_0+\delta_0 Q(t)\right)^2,
		\end{align*}
which proves \eqref{011405}.
\end{proof}		
		
At this stage, we have obtained the desired $t^{-\frac{3}{4}}$ decay for the basic energy $\mathcal{E}_0(t)$. However, the decay rates for higher-order derivatives of $\phi$ and $u$ are not yet optimal. In the next step, we refine the estimate by exploiting the additional regularity encoded in $\nabla\phi$ and $u$ to achieve the sharper $t^{-\frac{5}{4}}$ decay rates.

\begin{lemma}\label{l69}  Under the assumptions of Theorem \ref{thm3}, the higher-order derivatives of $(\phi,u)$ satisfy
		\begin{equation}\label{011406}
			\|(\nabla \phi,\nabla u)(t)\|_{H^2}\leq C(1+t)^{-\frac{5}{4}}	(\delta_0+N_0+\delta_0Q(t))
		\end{equation}
for a positive constant $C$ independent of time.
\end{lemma}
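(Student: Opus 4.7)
The plan is to mimic the strategy of Lemma \ref{l68} at the higher derivative level $\mathcal{E}_1(t):=\|\nabla\phi(t)\|_{H^2}^2+\|\nabla u(t)\|_{H^2}^2$, but exploit the sharper Green-function decay rate $(1+t)^{-5/4}$ that appears when one derivative is placed on $G_{11}^\ell$ or $G_{12}^\ell$, cf.\ \eqref{GL11}--\eqref{GL12} with $k=1$. First, I would derive a differential inequality for $\mathcal{E}_1(t)$ by applying $\nabla^k$ for $k=1,2,3$ to \eqref{Main31}, testing against $(\nabla^k\phi,\nabla^k u)$, and adding the cross-term functional of Lemma \ref{p2} (restricted to $k=1,2$) with a small weight. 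This produces dissipation for $\|\nabla^2\phi\|_{H^1}$ from the pressure-velocity coupling and for $\|\nabla u\|_{H^2}$ from the damping. All nonlinear errors are again controlled by $C(E(t)+\delta_0)D(t)^2$, exactly as in Lemma \ref{l1}, and are absorbed by the dissipation under the smallness assumption \eqref{030802}.

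Second, to close the inequality in terms of $\mathcal{E}_1(t)$, I invoke Lemma \ref{lema6} to get $\|\nabla\phi^h\|_{L^2}\lesssim \|\nabla^2\phi\|_{L^2}$, so the high-frequency part of $\nabla\phi$ is already absorbed by the dissipation. Only the low-frequency piece survives as an external source, yielding an inequality of the form
\begin{align*}
\frac{d}{dt}\mathcal{E}_1(t)+\tilde{C}\mathcal{E}_1(t)\leq C\|\nabla\phi^\ell(t)\|_{L^2}^2.
\end{align*}
The core step is then to estimate $\|\nabla\phi^\ell(t)\|_{L^2}$ via the Duhamel representation $W^\ell = G^\ell * W_0 + \int_0^t G^\ell(t-\tau) * F(\tau)\,d\tau$. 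The linear part contributes $C(1+t)^{-5/4}N_0$ by Lemma \ref{gl-lam}. For the nonlinear source, H\"older's inequality together with the definition \eqref{mskt} of $Q(t)$ gives $\|f_1(\tau)\|_{L^1}\lesssim \|u\|_{L^2}\|\nabla\phi\|_{L^2}+\delta_0\|u\|_{L^2}$ and $\|f_2(\tau)\|_{L^1}\lesssim \|u\|_{L^2}\|\nabla u\|_{L^2}$. Convolving with the kernel $(1+t-\tau)^{-5/4}$ and integrating in time yields $\|\nabla\phi^\ell(t)\|_{L^2}\leq C(1+t)^{-5/4}(N_0+\delta_0 Q(t))$.

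Substituting this into the differential inequality and applying Gr\"onwall's inequality produces the target estimate \eqref{011406}. The principal obstacle is that the coupling term $u\cdot\nabla V$ carries no intrinsic time decay beyond that of $u$ itself, and so threatens to degrade the decay rate of the low-frequency source. This is resolved by using the smallness $\|\nabla V\|_{L^2}\leq \delta_0$ as a multiplicative factor together with the $(1+\tau)^{-5/4}$ decay of $\|u\|_{L^2}$ supplied by $Q(t)$, so that $\int_0^t(1+t-\tau)^{-5/4}\delta_0(1+\tau)^{-5/4}Q(t)\,d\tau\lesssim \delta_0(1+t)^{-5/4}Q(t)$. The faster-decaying quadratic terms, handled by Cauchy--Schwarz together with the dissipation bound \eqref{D1} exactly as in Lemma \ref{l68}, contribute $\delta_0(1+t)^{-5/4}Q(t)^2$, which is again consistent with the claimed rate.
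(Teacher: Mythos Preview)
Your proposal is correct and follows essentially the paper's argument: a cross-term differential inequality for $\mathcal{E}_1(t)$ with source $\|\nabla\phi^\ell\|_{L^2}^2$, a Duhamel bound on the latter via Lemma~\ref{gl-lam}, and Gr\"onwall; the only technical difference is that the paper builds the cross-term with high-frequency projections $\int\nabla^{k-1}u^h\cdot\nabla^k\phi^h\,dx$ for $k=1,2,3$ rather than reusing Lemma~\ref{p2}, but the resulting inequality \eqref{0113010} coincides with yours. One minor slip: the Cauchy--Schwarz step with \eqref{D1} that you invoke actually produces $\delta_0(1+t)^{-5/4}Q(t)$, not $\delta_0(1+t)^{-5/4}Q(t)^2$, which is exactly what the statement of the lemma requires.
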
		
\begin{proof}	
\noindent \emph{Step 1: Higher-order energy estimates.} Following the approach in the proof of \eqref{A3} and using the smallness of $\delta_0$, for $k \in {1,2,3}$ we find
		\begin{align}\label{011204}
			&\frac{1}{2}\frac{d}{dt}(\|\nabla^k \phi \|_{L^2}^2+\|\nabla^k u\|_{L^2}^2)
			+\gamma \|\nabla^ku\|_{L^2}^2 \leq C\delta_0(\|\nabla^k \phi \|_{L^2}^2+\|\nabla^ku\|_{L^2}^2+\|\nabla u\|_{H^1}^2).
		\end{align}
This captures the dissipation of $u$ and couples it with $\phi$ at derivative level $k$. However, to exploit the smoothing effect from the elliptic operator $\mathcal{K}_\infty$ on the high-frequency part of $\phi$, we refine the estimate below.

		Let $k=1,2,3$. Applying the operator $\nabla^{k-1} \mathcal{K}_{\infty}$ to \eqref{Main31}, multiplying the resulting equation by $\nabla^{k} \phi^h$, and integrating over $\mathbb{R}^3$, using Lemma \ref{lema6}, we obtain
		\begin{align}\label{0102}
		\begin{aligned}
			&\|\nabla^k \phi^h\|_{L^2}^2+\frac{d}{dt}\int_{\mathbb{R}^3} \nabla^{k-1}u^h\cdot\nabla^k \phi^hdx\\
			&\quad =-\int_{\mathbb{R}^3}\nabla^{k-1}u^h\cdot\nabla^k\text{div}u^hdx-\gamma\int_{\mathbb{R}^3} \nabla^{k-1}u^h\cdot\nabla^k\phi^hdx\\
			&\qquad -\int_{\mathbb{R}^3}\nabla^{k-1}\mathcal{K}_\infty(u\cdot\nabla V)\cdot\nabla^k \phi ^hdx  -\int_{\mathbb{R}^3}\nabla^k\mathcal{K}_\infty(u\cdot\nabla \phi)\cdot \nabla^{k-1}u^hdx \cr
			&\qquad -\int_{\mathbb{R}^3}\nabla^{k-1}\mathcal{K}_\infty(u\cdot\nabla u)\cdot \nabla^k \phi ^hdx.
			\end{aligned}
		\end{align}
		The first term and the second term of the right-hand side of \eqref{0102} can be estimated as follows
		\begin{align*}
			&\Big|\int_{\mathbb{R}^3}\nabla^{k-1}u^h\cdot\nabla^k\text{div}u^hdx\Big|+\gamma\Big|\int_{\mathbb{R}^3}\nabla^{k-1}u^h\cdot\nabla^k \phi ^hdx\Big|\\
			&\quad \leq \|\nabla^ku^h\|_{L^2}^2+\gamma\|\nabla^{k-1}u^h\|_{L^2}\|\nabla^{k}\phi^h\|_{L^2}\\
			&\quad \leq C\|\nabla^ku\|_{L^2}^2+C\|\nabla^{k}u\|_{L^2}\|\nabla^{k}\phi^h\|_{L^2}\\
			&\quad \leq C\|\nabla^ku\|_{L^2}^2+\frac{1}{2}\|\nabla^{k}\phi^h\|_{L^2}^2.
		\end{align*}
		For the third term, we divide 
		\begin{align*}
			&\int_{\mathbb{R}^3}\nabla^{k-1}\mathcal{K}_\infty(u\cdot\nabla V)\cdot\nabla^k \phi ^hdx\\
			&\quad =\int_{\mathbb{R}^3}\nabla^{k-1}(u\cdot\nabla V)\cdot\nabla^k \phi ^hdx
			-\int_{\mathbb{R}^3}\nabla^{k-1}\mathcal{K}_1(u\cdot\nabla V)\cdot\nabla^k \phi ^hdx,
		\end{align*}
where
		\begin{align*}
			&\int_{\mathbb{R}^3}\nabla^{k-1}(u\cdot\nabla V)\cdot\nabla^k \phi ^hdx\\
			&\quad =\int_{\mathbb{R}^3}(\nabla^{k-1}(u\cdot\nabla V)-u\cdot\nabla^kV)\cdot\nabla^k\phi^hdx
			+\int_{\mathbb{R}^3}u\cdot\nabla^kV\cdot\nabla^k\phi^hdx\\
			&\quad \leq C(\|\nabla u\|_{L^\infty}\|\nabla^{k-1}V\|_{L^2}+\|\nabla^{k-1}u\|_{L^6}\|\nabla V\|_{L^3})\|\nabla^k\phi^h\|_{L^2}\\
			&\qquad +C\|u\|_{L^6}\|\nabla^kV\|_{L^3}\|\nabla^k\phi^h\|_{L^2}\\
			&\quad \leq C\delta_0\|\nabla^2u\|_{H^1}\|\nabla^k\phi^h\|_{L^2}
			+C\delta_0\|\nabla^ku\|_{L^2}\|\nabla^k\phi^h\|_{L^2}+C\delta_0\|\nabla u\|_{L^2}\|\nabla^k\phi^h\|_{L^2}\\
			&\quad \leq C\delta_0(\|\nabla^k\phi^h\|_{L^2}^2+\|\nabla^2u\|_{H^1}^2+
			\|\nabla^ku\|_{L^2}^2)
		\end{align*}	
and
		\begin{align*}
			\int_{\mathbb{R}^3}\nabla^{k-1}\mathcal{K}_1(u\cdot\nabla V)\cdot\nabla^k\phi^hdx 
			&\leq C\|u\cdot\nabla V\|_{L^2}\|\nabla^k\phi^h\|_{L^2}\\
			&\leq C\|u\|_{L^6}\|\nabla V\|_{L^3}\|\nabla^k\phi^h\|_{L^2}\\
			&\leq C\delta_0 \|\nabla u\|_{L^2}\|\nabla^k\phi^h\|_{L^2}\\
			&\leq C\delta_0(\|\nabla^k\phi^h\|_{L^2}^2+\|\nabla u\|_{L^2}^2).
		\end{align*}
This gives
		\begin{align*}
			\int_{\mathbb{R}^3}\nabla^{k-1}\mathcal{K}_\infty(u\cdot\nabla V)\cdot\nabla^k \phi ^hdx \leq C\delta_0(\|\nabla^k\phi^h\|_{L^2}^2+\|\nabla u\|_{H^2}^2+\|\nabla^ku\|_{L^2}^2).
		\end{align*}

For the fourth term, we observe that 	for $k\geq 2$ 
		\begin{align}\label{011301}
		\begin{aligned}
			&	\Big|\int_{\mathbb{R}^3}\nabla^k(u\cdot\nabla \phi)\cdot \nabla^{k-1}udx\Big|\\
			&\quad \leq \Big|\int_{\mathbb{R}^3}\nabla^{k-1}(u\cdot\nabla \phi)\cdot \nabla^{k-1}\text{div}udx\Big|\\
			&\quad \leq \Big|\int_{\mathbb{R}^3}(\nabla^{k-1}(u\cdot\nabla \phi)-u\cdot\nabla^{k-1}\nabla \phi)\cdot \nabla^{k-1}\text{div}udx\Big|\cr
			&\qquad +\Big|\int_{\mathbb{R}^3}(u\cdot\nabla^{k-1}\nabla \phi)\cdot \nabla^{k-1}\text{div}udx\Big|\\
			&\quad \leq \big\|[\nabla^{k-1},u]\nabla\phi\big\|_{L^2}\|\nabla^ku\|_{L^2}+C\|u\|_{L^\infty}\|\nabla^k \phi \|_{L^2}\|\nabla^ku\|_{L^2}\\
			&\quad \leq C(\|\nabla u\|_{L^3}\|\nabla^{k-1}\phi\|_{L^6}+\|\nabla^{k-1}u\|_{L^6}\|\nabla \phi\|_{L^3})\|\nabla^{k}u\|_{L^2}+C\delta_0\|\nabla^k \phi \|_{L^2}\|\nabla^ku\|_{L^2}\\
			&\quad \leq C\delta_0(\|\nabla^k \phi \|_{L^2}^2+\|\nabla^{k}u\|_{L^2}^2).
			\end{aligned}
		\end{align}
		We also readily find 
		\begin{equation}\label{011302}
			\begin{aligned}
				&	\Big|\int_{\mathbb{R}^3}\nabla (u\cdot\nabla \phi)\cdot udx\Big|\leq C\|u\|_{L^\infty}
				\|\nabla\phi\|_{L^2}\|\text{div} u\|_{L^2}\leq C\delta_0(\|\nabla\phi\|_{L^2}^2+\|\nabla u\|_{L^2}^2).
			\end{aligned}
		\end{equation}
Thus, combining \eqref{011301} and \eqref{011302} gives 
		\begin{align*}
			\Big|\int_{\mathbb{R}^3}\nabla^k(u\cdot\nabla \phi)\cdot \nabla^{k-1}udx\Big| \leq C\delta_0(\|\nabla^k \phi \|_{L^2}^2+\|\nabla^{k}u\|_{L^2}^2)
		\end{align*}
		for $k\geq 1$. Hence, by splitting $\mathcal{K}_\infty=I-\mathcal{K}_1$ we obtain
		\begin{align*}
			&\Big|\int_{\mathbb{R}^3}\nabla^k\mathcal{K}_\infty(u\cdot\nabla \phi)\cdot \nabla^{k-1}udx\Big|\nonumber\\
			&\quad \leq \Big|\int_{\mathbb{R}^3}\nabla^k(u\cdot\nabla \phi)\cdot \nabla^{k-1}udx\Big|+	\Big|\int_{\mathbb{R}^3}\nabla^k\mathcal{K}_1(u\cdot\nabla \phi)\cdot \nabla^{k-1}u dx\Big|\nonumber\\
			&\quad \leq C\delta_0(\|\nabla^k \phi \|_{L^2}^2+\|\nabla^{k}u\|_{L^2}^2).
		\end{align*}
		In a similar manner, we also get
		\begin{align*}
			\Big|\int_{\mathbb{R}^3}\nabla^{k-1}\mathcal{K}_\infty(u\cdot\nabla u)\cdot \nabla^k \phi ^hdx\Big|
			\leq C\delta_0(\|\nabla^k \phi \|_{L^2}^2+\|\nabla^{k}u\|_{L^2}^2).
		\end{align*}
		Combining all the above estimates, we arrive at
		\begin{align*}
			&\|\nabla^k \phi^h\|_{L^2}^2+\frac{d}{dt}\int_{\mathbb{R}^3} \nabla^{k-1}u^h\cdot\nabla^k \phi^hdx\nonumber\\
			&\quad \leq C\|\nabla^ku\|_{L^2}^2+C\delta_0(\|\nabla^k \phi \|_{L^2}^2+\|\nabla^{k}u\|_{L^2}^2+\|\nabla u\|_{H^2}^2)
			+\frac{1}{2}\|\nabla^k\phi^h\|_{L^2}^2, 
		\end{align*}
that is,
		\begin{align}\label{011205}
		\begin{aligned}
			&\frac{1}{2}\|\nabla^k \phi^h\|_{L^2}^2+\frac{d}{dt}\int_{\mathbb{R}^3} \nabla^{k-1}u^h\cdot\nabla^k \phi^hdx\\
			&\quad \leq C\|\nabla^ku\|_{L^2}^2+C\delta_0(\|\nabla^k \phi \|_{L^2}^2+\|\nabla^{k}u\|_{L^2}^2+\|\nabla u\|_{H^2}^2).
			\end{aligned}
		\end{align}
To exploit the damping in \eqref{011205}, we introduce a small parameter $\kappa_2>0$ and combine \eqref{011204} with $\kappa_2\times\eqref{011205}$:		
		\begin{align*}
			&	\frac{d}{dt}\left(\frac{1}{2}\big(\|\nabla^k \phi \|_{L^2}^2+\|\nabla^k u\|_{L^2}^2\big)+ \kappa _2\int_{\mathbb{R}^3} \nabla^{k-1}u^h\cdot\nabla^k \phi^hdx\right)\\
			&\quad+\gamma\|\nabla^ku\|_{L^2}^2+\frac{1}{2} \kappa _2\|\nabla^k \phi^h\|_{L^2}^2\nonumber\\
			&\qquad \leq  C\delta_0(\|\nabla^k \phi \|_{L^2}^2+\|\nabla^ku\|_{L^2}^2+\|\nabla u\|_{H^1}^2)
			+C\delta_0\kappa_2(\|\nabla^k \phi \|_{L^2}^2+\|\nabla^{k}u\|_{L^2}^2+\|\nabla u\|_{H^2}^2).
		\end{align*}
Note that by Lemma \ref{lema6} and the smanllness of $ \kappa _2$ 	
\[
				\frac{1}{2}(\|\nabla^k \phi\|_{L^2}^2+\|\nabla^k u\|_{L^2}^2)+ \kappa _2\int_{\mathbb{R}^3} \nabla^{k-1}u^h\cdot\nabla^k \phi^hdx \sim (\|\nabla^k \phi\|_{L^2}^2+\|\nabla^k u\|_{L^2}^2),
\]	
thus choosing $\kappa_2$ and $\delta_0$ sufficiently small, we conclude 				
		\begin{align}\label{011304}
		\begin{aligned}
			&\frac{d}{dt}(\|\nabla^k \phi\|_{L^2}^2+\|\nabla^k u\|_{L^2}^2)
			+C(\gamma\|\nabla^ku\|_{L^2}^2+\kappa_2\|\nabla^k\phi^h\|_{L^2}^2)\\
			&\quad \leq C\delta_0\|\nabla^k \phi\|_{L^2}^2+C\delta_0\|\nabla u\|_{H^1}^2+C\delta_0\kappa _2\|\nabla u\|_{H^2}^2.
			\end{aligned}
		\end{align}
Summing \eqref{011304} over $k=1,2,3$ and using the smallness of $\delta_0$ and $\kappa_2$, we obtain for some   $\tilde{C}_6>0$,
		\begin{align}
			\frac{d}{dt}(\|\nabla \phi\|_{H^2}^2+\|\nabla u\|_{H^2}^2)
			+\tilde{C}_6(\|\nabla \phi\|_{H^2}^2+\|\nabla u\|_{H^2}^2)
			\leq C\|\nabla\phi^\ell\|_{L^2}^2.\label{0113010}
		\end{align}
	
\medskip
\noindent \emph{Step 2: Low-frequency decay and closure via Gr\"onwall's inequality.} We finally estimate the $L^2$ time decay of $\nabla\phi^\ell$.  Using the Duhamel representation for the low-frequency part, we estimate
		\begin{equation}\label{0113011}
			\begin{aligned}
				&\|\nabla\phi^{\ell}(t)\|_{L^2}\\
				&\quad \leq C(1+t)^{-\frac{5}{4}}\|W_0\|_{L^1}\cr
				&\qquad +C\int_0^t(1+t-\tau)^{-\frac{5}{4}}
				\big(\|(u\cdot\nabla\phi)(\tau)\|_{L^1}+\|(u\cdot\nabla u)(\tau)\|_{L^1}  +\|(u\cdot\nabla V)(\tau)\|_{L^1}\big)d\tau\\
				&\quad \leq C(1+t)^{-\frac{5}{4}}N_0 \cr
				&\qquad + C\int_0^t(1+t-\tau)^{-\frac{5}{4}}\big(\|u\|_{L^2} \|\nabla\phi\|_{L^2}+\|u\|_{L^2}\|\nabla u\|_{L^2}+\|u\|_{L^2}\|\nabla V\|_{L^2}\big)d\tau\\
				&\quad \leq C(1+t)^{-\frac{5}{4}}N_0+C\delta_0\int_0^t(1+t-\tau)^{-\frac{5}{4}} (1+\tau)^{-\frac{5}{4}}Q(t)d\tau\\
				&\qquad +C\left(\int_0^t(1+t-\tau)^{-\frac{5}{2}}\|u(\tau)\|_{L^2}^2d\tau\right)^{\frac{1}{2}}
				\left(\int_0^t(\|\nabla\phi(\tau)\|_{L^2}^2+\|\nabla u(\tau)\|_{L^2}^2)d\tau\right)^{\frac{1}{2}} \\
				&\quad \leq C(1+t)^{-\frac{5}{4}}(N_0+\delta_0Q(t)).
			\end{aligned}
		\end{equation}
		Then we deduce from \eqref{0113010} and \eqref{0113011} that 
		\begin{equation*}
			\frac{d}{dt}\mathcal{E}_{1}(t)+\tilde{C}_6\mathcal{E}_{1}(t)\leq C(1+t)^{-\frac{5}{2}}
			(N_0+\delta_0Q(t))^2,
		\end{equation*}
		and applying Gr\"{o}nwall's inequality yields
		\begin{equation*}
			\mathcal{E}_1(t)=\|(\nabla\phi,\nabla u)(t)\|_{H^2}^2\leq C(1+t)^{-\frac{5}{2}}(\delta_0+N_0+\delta_0Q(t))^2.
		\end{equation*}
		This completes the proof.
 \end{proof}
 %
 %
 %
 %
 %
 %
 %
 %
 %
  \subsubsection{Refined decay estimates for higher-order derivatives}

We now refine the decay analysis to obtain improved rates for higher-order derivatives. 
Building on Lemma \ref{l68}, which provides preliminary decay control, we incorporate the refined derivative bounds derived in the previous subsection into the time-weighted framework. 
This allows us to close the estimates for the functional $Q(t)$ introduced in \eqref{mskt}, yielding uniform-in-time bounds and thereby establishing enhanced large-time decay rates for $\phi$ and $u$ up to third-order derivatives. 

\begin{proposition}\label{p68}  Under the assumptions of Theorem \ref{thm3}, there exists a constant $C>0$ independent of time such that 
		\begin{align*}
			\|\phi(t)\|_{L^2}&\leq C(\delta_0+ N_0)(1+t)^{-\frac{3}{4}},\\
			\|\nabla \phi(t)\|_{H^2}&\leq C(\delta_0+ N_0)(1+t)^{-\frac{5}{4}},\\
			\|u(t)\|_{H^3}&\leq C(\delta_0+ N_0)(1+t)^{-\frac{5}{4}}.	
		\end{align*}
\end{proposition}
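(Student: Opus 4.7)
The aim is to close a self-consistent bound for the time-weighted functional $Q(t)$ defined in \eqref{mskt}, from which the three decay rates follow immediately by unpacking the supremum. Three of the four quantities inside $Q(t)$ are already controlled by Lemmas \ref{l68}--\ref{l69}: \eqref{011405} yields $(1+t)^{3/4}\|\phi(t)\|_{L^2}\leq C(\delta_0+N_0+\delta_0 Q(t))$, while \eqref{011406} yields both $(1+t)^{5/4}\|\nabla\phi(t)\|_{H^2}\leq C(\delta_0+N_0+\delta_0 Q(t))$ and $(1+t)^{5/4}\|\nabla u(t)\|_{H^2}\leq C(\delta_0+N_0+\delta_0 Q(t))$. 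The only missing piece is an improved $(1+t)^{-5/4}$-bound for $\|u(t)\|_{L^2}$, since Lemma \ref{l68} alone only produces the slower rate $(1+t)^{-3/4}$.

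To upgrade $\|u(t)\|_{L^2}$, I would split $u=u^\ell+u^h$. The high-frequency piece is disposed of at once by Lemma \ref{lema6}: $\|u^h(t)\|_{L^2}\lesssim\|\nabla u^h(t)\|_{L^2}\lesssim\|\nabla u(t)\|_{L^2}$, which decays at the desired rate by \eqref{011406}. For the low-frequency piece, I would use the Duhamel representation
\begin{align*}
u^\ell(t)=G_{21}^\ell*\phi_0+G_{22}^\ell*u_0+\int_0^t\!\bigl(G_{21}^\ell(t-\tau)*f_1(\tau)+G_{22}^\ell(t-\tau)*f_2(\tau)\bigr)d\tau,
\end{align*}
together with the Green-function bounds \eqref{GL21}--\eqref{GL22}. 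The homogeneous contribution is then $C(1+t)^{-5/4}\|\phi_0\|_{L^1}+C(1+t)^{-7/4}\|u_0\|_{L^1}\lesssim(1+t)^{-5/4}N_0$.

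The nonlinear contribution is the delicate part. By H\"older's inequality and the definition of $Q(t)$,
\begin{align*}
\|f_1(\tau)\|_{L^1}\lesssim\|u\|_{L^2}\|\nabla\phi\|_{L^2}+\delta_0(1+\tau)^{-5/4}Q(t),\qquad \|f_2(\tau)\|_{L^1}\lesssim\|u\|_{L^2}\|\nabla u\|_{L^2}.
\end{align*}
For the product-type pieces I would mimic the proof of \eqref{lem-hs3} and apply Cauchy--Schwarz in time so that the convolution factor absorbs $\|u\|_{L^2}^2\lesssim(1+\tau)^{-5/2}Q(t)^2$, while the remaining factor $\int_0^t\|\nabla\phi\|_{L^2}^2\,d\tau$ or $\int_0^t\|\nabla u\|_{L^2}^2\,d\tau$ is controlled by the global dissipation bound \eqref{D1}, $\lesssim\delta_0^2$. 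The resulting convolution integrals $\int_0^t(1+t-\tau)^{-5/2}(1+\tau)^{-5/2}\,d\tau$ and $\int_0^t(1+t-\tau)^{-7/2}(1+\tau)^{-5/2}\,d\tau$ are evaluated by the standard splitting $[0,t/2]\cup[t/2,t]$ and give sharp $(1+t)^{-5/2}$ decay, whose square root delivers the target $(1+t)^{-5/4}$; the linear-in-$V$ term already carries a $\delta_0$ and contributes $C\delta_0 Q(t)(1+t)^{-5/4}$ directly.

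Assembling the pieces yields $(1+t)^{5/4}\|u(t)\|_{L^2}\leq C(\delta_0+N_0+\delta_0 Q(t))$, and combining with the three estimates already in hand produces $Q(t)\leq C(\delta_0+N_0)+C\delta_0 Q(t)$. Choosing $\delta_0$ sufficiently small absorbs the last term into the left-hand side, giving $Q(t)\leq C(\delta_0+N_0)$, which is exactly the three inequalities stated in Proposition \ref{p68}. The main obstacle I anticipate is the purely technical convolution bookkeeping: ensuring that every time integral truly produces the sharp rate $(1+t)^{-5/4}$ rather than a logarithmically degraded one. This is handled by the Cauchy--Schwarz splitting that isolates one factor as a time-integrated dissipation bounded through \eqref{D1}, which is precisely what allows the convolution kernels to see exponents large enough to avoid logarithmic losses.
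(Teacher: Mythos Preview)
Your proposal is correct and closes the $Q(t)$-loop exactly as the paper does; the only genuine difference is how you upgrade $\|u(t)\|_{L^2}$ to the $(1+t)^{-5/4}$ rate. You go through the full linearized Green function with a low--high frequency split: $\|u^h\|_{L^2}\lesssim\|\nabla u\|_{L^2}$ via Lemma \ref{lema6}, and $\|u^\ell\|_{L^2}$ via the Duhamel formula with $G_{21}^\ell,G_{22}^\ell$ and the Cauchy--Schwarz-in-time trick from \eqref{lem-hs3}. This works. The paper instead exploits the much simpler observation that the velocity equation $\eqref{Main31}_2$ is, by itself, a damped ODE in time: integrating $\partial_t u+\gamma u=-\nabla\phi-u\cdot\nabla u$ gives
\[
u(t)=e^{-\gamma t}u_0-\int_0^t e^{-\gamma(t-\tau)}\bigl(\nabla\phi+u\cdot\nabla u\bigr)(\tau)\,d\tau,
\]
so $\|u(t)\|_{L^2}$ inherits the $(1+t)^{-5/4}$ rate directly from $\|\nabla\phi\|_{L^2}$ (already controlled by \eqref{011406}) through the exponential kernel, with no frequency splitting and no Cauchy--Schwarz bookkeeping. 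Your route is heavier but has the advantage of being uniform with the treatment of $\phi^\ell$ and $\nabla\phi^\ell$ elsewhere in the paper; the paper's route is shorter because it isolates the one place where the damping acts as a genuine pointwise-in-frequency exponential relaxation and uses that structure directly.
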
	
\begin{proof}			
We first improve the decay rates of $\|u(t)\|_{L^2}$. From $\eqref{Main31}_2$, we obtain the Duhamel representation
		\begin{align}
			u=e^{-\gamma t}u_0-\int_0^te^{-\gamma(t-\tau)}(\nabla\phi+u\cdot\nabla u)(\tau)d\tau.
			\label{011408}
		\end{align}
Using the bounds obtained in \eqref{011406} for $\nabla \phi$ and $\nabla u$, it follows that
		\begin{align}
			\|u\|_{L^2}&\leq e^{-\gamma t}\|u_0\|_{L^2}+\int_0^te^{-\gamma(t-\tau)}\|(\nabla\phi+u\cdot\nabla u)(\tau)\|_{L^2}d\tau\nonumber\\
			&\leq e^{-\gamma t}\|u_0\|_{L^2}+C\int_0^te^{-\gamma(t-\tau)}(\|\nabla\phi\|_{L^2}+\|\nabla u\|_{H^1}\|\nabla u\|_{L^2})d\tau\nonumber\\
			&\leq C(1+t)^{-\frac{5}{4}}(\delta_0+N_0+\delta_0 Q(t)).\label{011407}
		\end{align}
		
Hence, combining \eqref{011406}, \eqref{011407}, and \eqref{011405} with the definition $Q(t)$ in \eqref{mskt}, we obtain
		\begin{align*}
			Q(t)\leq C(\delta_0+N_0+\delta_0 Q(t)).
		\end{align*}
Choosing $\delta_0$ sufficiently small gives 
		\begin{align*}
			Q(t)\leq C(\delta_0+N_0),
		\end{align*}
and substituting this back into the definition of $Q(t)$ yields
		\begin{align*}
			\|\phi(t)\|_{L^2}&\leq C(\delta_0+ N_0)(1+t)^{-\frac{3}{4}},\\
			\|\nabla \phi(t)\|_{H^2}&\leq C(\delta_0+ N_0)(1+t)^{-\frac{5}{4}},\\
			\|u(t)\|_{H^3}&\leq C(\delta_0+ N_0)(1+t)^{-\frac{5}{4}}.
		\end{align*}
		This completes the proof.
	\end{proof}
	
Noting that the decay rates in Proposition \ref{p68} for higher-order derivatives remain relatively slow, we now turn to improving these rates. In particular, we derive sharper decay estimates for the second derivatives of $\phi$ and the gradient of $\nabla u$, which will be crucial for the long-time regularity analysis.
	\begin{proposition}\label{p69}
		Under the assumptions of Theorem \ref{thm3}, there exists a constant $C>0$ independent of time such that 
\[
			\|\nabla^2\phi(t)\|_{H^1}\leq C(\delta_0+N_0+N_0^2)(1+t)^{-\frac{7}{4}}, \quad \|\nabla u(t)\|_{H^2}\leq C(\delta_0+N_0+N_0^2)(1+t)^{-\frac{7}{4}}.
\]
	\end{proposition}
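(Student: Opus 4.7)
The plan is to mirror the strategy of Lemma \ref{l69} and Proposition \ref{p68}, pushing the analysis one derivative order higher. I would introduce the time-weighted functional
\[
R(t) := \sup_{0 \leq s \leq t}(1+s)^{7/4}\bigl(\|\nabla^2\phi(s)\|_{H^1} + \|\nabla u(s)\|_{H^2}\bigr),
\]
and aim to close a self-improvement bound $R(t) \leq C(\delta_0 + N_0 + N_0^2)$ by combining a refined higher-order energy inequality with a Duhamel representation for the low-frequency source $\nabla^2\phi^\ell$.

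First, I would derive an energy inequality of the form
\[
\frac{d}{dt}\tilde{\mathcal{E}}(t) + C\,\tilde{\mathcal{E}}(t) \leq C\|\nabla^2\phi^\ell(t)\|_{L^2}^2,
\]
where $\tilde{\mathcal{E}}(t) \sim \|\nabla^2\phi\|_{H^1}^2 + \|\nabla u\|_{H^2}^2$ is augmented by hypocoercivity cross-terms $\sum_{k=2}^3 \int \nabla^{k-1}u^h \cdot \nabla^k \phi^h\,dx$ to recover the missing density dissipation at derivative levels $2$ and $3$. The derivation parallels \eqref{011204}--\eqref{0113010} in Lemma \ref{l69}: perform $L^2$ energy estimates on $\nabla^k$ of \eqref{Main31} for $k=2,3$, handle commutators via Lemma \ref{lema2}, project to high frequencies using Lemma \ref{lema6}, and absorb nonlinear and potential contributions into the dissipation via the smallness of $\delta_0$ and the a priori bound on $E(t)$.

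Second, I would estimate $\|\nabla^2\phi^\ell(t)\|_{L^2}$ via the Duhamel formula together with the Green function bounds in Lemma \ref{gl-lam}. The linear part contributes $C(1+t)^{-7/4}N_0$. For the genuinely quadratic nonlinearities, Proposition \ref{p68} gives $\|u\cdot\nabla\phi\|_{L^1}+\|u\cdot\nabla u\|_{L^1} \lesssim (\delta_0+N_0)^2(1+\tau)^{-5/2}$, and convolution against $(1+t-\tau)^{-7/4}$ and $(1+t-\tau)^{-9/4}$, respectively, yields contributions of order $(\delta_0+N_0)^2(1+t)^{-7/4}$, accounting for the $N_0^2$ factor in the stated constant.

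The main obstacle will be the potential-coupling term $u\cdot\nabla V$. Since $\nabla V$ does not decay in time, the naive bound $\|u\cdot\nabla V\|_{L^1} \leq \|u\|_{L^2}\|\nabla V\|_{L^2} \lesssim \delta_0(1+\tau)^{-5/4}$ convolved against $(1+t-\tau)^{-7/4}$ only produces $(1+t)^{-5/4}$, which is strictly slower than the target. To overcome this, I plan to integrate by parts inside the Duhamel convolution via $u\cdot\nabla V = \mathrm{div}(uV) - V\,\mathrm{div}u$, so that either one derivative is transferred from the Green kernel onto the pair $(u,V)$---using $\|V\|_{L^2}\leq C\delta_0$ given by the hypothesis $V\in H^4$---or the smoother factor $V\,\mathrm{div}u$ pairs with the already-improved $\|\nabla u\|_{L^2}$ bootstrap bound encoded in $R(t)$. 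This structural rearrangement is the crux of the argument, and it is precisely what pins the high-order decay exponent at $7/4$ rather than allowing further improvement. Once the sharpened bound $\|\nabla^2\phi^\ell(t)\|_{L^2}^2 \lesssim (\delta_0+N_0+N_0^2)^2(1+t)^{-7/2}$ is secured, Gr\"onwall's inequality applied to the refined energy estimate yields $\tilde{\mathcal{E}}(t) \lesssim (\delta_0+N_0+N_0^2)^2(1+t)^{-7/2}$, and a Duhamel argument on the $u$-equation analogous to \eqref{011408}--\eqref{011407} transfers this decay to $\|\nabla u\|_{H^2}$, closing the bootstrap for $R(t)$ and completing the proof.
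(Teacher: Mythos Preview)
Your overall framework is correct and matches the paper's: define the time-weighted functional (the paper calls it $M(t)$), sum the energy inequality \eqref{011304} over $k=2,3$ to get a differential inequality whose right-hand side contains $\|\nabla^2\phi^\ell\|_{L^2}^2$ plus a residual $C\delta_0\|\nabla u\|_{L^2}^2$ (which you omitted, but which is easily absorbed via $\|\nabla u\|_{L^2}\leq (1+t)^{-7/4}R(t)$), then estimate $\|\nabla^2\phi^\ell\|_{L^2}$ by Duhamel and close the bootstrap.

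The genuine gap is your treatment of $u\cdot\nabla V$. Your claim ``$\|V\|_{L^2}\leq C\delta_0$ given by the hypothesis $V\in H^4$'' is false: Theorem \ref{thm3} only assumes $\|\nabla V\|_{H^3}\leq\delta_0$, so $\|V\|_{L^2}$ is merely bounded, not small. After your integration by parts $u\cdot\nabla V=\mathrm{div}(uV)-V\,\mathrm{div}u$, both pieces carry a factor $\|V\|_{L^2}$ (or $\|V\|_{L^3}$) rather than $\|\nabla V\|$. For the piece $V\,\mathrm{div}u$ you would get a contribution $C\|V\|_{L^2}R(t)(1+t)^{-7/4}$ inside the bootstrap, and since the coefficient $C\|V\|_{L^2}$ is not small you cannot absorb it. For the piece $\mathrm{div}(uV)$, shifting the divergence onto the kernel gives $\|\nabla^3 G_{11}^\ell\ast(uV)\|_{L^2}\lesssim(1+t-\tau)^{-9/4}\|u\|_{L^2}\|V\|_{L^2}$, and the near-$t$ region $\tau\in[t/2,t]$ only yields $(1+t)^{-5/4}$, which is too slow. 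The paper avoids this by \emph{not} integrating by parts: it splits the time integral at $t/2$, uses the naive $L^1$ bound on $[0,t/2]$, and on $[t/2,t]$ switches to an $L^2$-based estimate $\|u\cdot\nabla V\|_{L^2}\leq\|u\|_{L^6}\|\nabla V\|_{L^3}\leq C\delta_0\|\nabla u\|_{L^2}\leq C\delta_0 M(t)(1+\tau)^{-7/4}$ (together with a companion term $\|\nabla u\cdot\nabla V\|_{L^1}\leq\delta_0 M(t)(1+\tau)^{-7/4}$ via one derivative shift). The key is that Sobolev embedding $\|u\|_{L^6}\leq C\|\nabla u\|_{L^2}$ transfers the bootstrap gain to $u$ while keeping the small factor $\nabla V$ intact, so the coefficient in front of $M(t)$ is $\delta_0$ and the bootstrap closes.
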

	\begin{proof}
		Define a new time-weighted functional $M(t)$ as 
		\begin{align}\label{011306}
			M(t)=\sup_{0<\tau\leq t}\Big\{
			(1+\tau)^{\frac{7}{4}}\|\nabla^2\phi(\tau)\|_{H^1}
			+(1+\tau)^{\frac{7}{4}}\|\nabla u(\tau)\|_{H^2}
			\Big\}.
		\end{align}
It is immediate that
		\begin{align*}
			\|\nabla^2\phi(t)\|_{H^1}\leq (1+t)^{-\frac{7}{4}}M(t),\quad \|\nabla u(t)\|_{H^2}\leq (1+t)^{-\frac{7}{4}}M(t).
		\end{align*}
Summing \eqref{011304} for $k=2,3$, we obtain
		\begin{align}\label{011305}
		\begin{aligned}
			&\frac{d}{dt}(\|\nabla^2\phi\|_{H^1}^2+\|\nabla^2 u\|_{H^1}^2)
			+C(\|\nabla^2\phi\|_{H^1}^2+\|\nabla^2u\|_{H^1}^2)\\
			&\quad \leq C\|\nabla^2\phi^\ell\|_{L^2}^2+C(\delta_0+\delta_0\kappa _2)\|\nabla u\|_{L^2}^2\\
			&\quad \leq C\|\nabla^2\phi^\ell\|_{L^2}^2+C\delta_0\|\nabla u\|_{L^2}^2.
			\end{aligned}
		\end{align}
		For the estimates on the right-hand side of the above, using the Duhamel representation and Green function bounds for the low-frequency part, we derive
			\begin{align*}
				\|\nabla^2\phi^{\ell}(t)\|_{L^2}
				&\leq C(1+t)^{-\frac{7}{4}}\|W_0\|_{L^1}\cr
				&\quad 	+C\int_0^t(1+t-\tau)^{-\frac{7}{4}}
				\big(\|(u\cdot\nabla\phi)(\tau)\|_{L^1}+\|(u\cdot\nabla u)(\tau)\|_{L^1}\big)d\tau\\
				&\quad+C\int_0^\frac{t}{2}(1+t-\tau)^{-\frac{7}{4}} \|u\cdot\nabla V\|_{L^1}d\tau
				+C\int_\frac{t}{2}^t(1+t-\tau)^{-\frac{3}{2}} \|u\cdot\nabla V\|_{L^2}d\tau\\
				&\quad+C\int_\frac{t}{2}^t(1+t-\tau)^{-\frac{5}{4}} \|\nabla u\cdot\nabla V\|_{L^1}d\tau\\
				&\leq C(1+t)^{-\frac{7}{4}}N_0+ C\int_0^t(1+t-\tau)^{-\frac{7}{4}}\big(\|u\|_{L^2} \|\nabla\phi\|_{L^2}+\|u\|_{L^2}\|\nabla u\|_{L^2}\big)d\tau\\
				&\quad+C\int_0^{\frac{t}{2}} (1+t-\tau)^{-\frac{7}{4}}\|u\|_{L^2}\|\nabla V\|_{L^2}d\tau +C\int_{\frac{t}{2}}^t(1+t-\tau)^{-\frac{3}{2}}\|u\|_{L^6}\|\nabla V\|_{L^3}d\tau\\
				&\quad+C\int_{\frac{t}{2}}^t(1+t-\tau)^{-\frac{5}{4}}\|\nabla u\|_{L^2}\|\nabla V\|_{L^2}d\tau\\
				&\leq C(1+t)^{-\frac{7}{4}}N_0+ C(\delta_0+N_0)^2\int_0^t(1+t-\tau)^{-\frac{7}{4}}(1+\tau)^{-\frac{5}{2}}d\tau\\
				&\quad+C\delta_0(\delta_0+N_0)\int_0^{\frac{t}{2}} (1+t-\tau)^{-\frac{7}{4}}(1+\tau)^{-\frac{5}{4}}d\tau\cr
				&\quad +C\delta_0M(t)\int_{\frac{t}{2}}^t(1+t-\tau)^{-\frac{3}{2}}(1+\tau)^{-\frac{7}{4}}d\tau\\
				&\quad+C\delta_0M(t)\int_{\frac{t}{2}}^t(1+t-\tau)^{-\frac{5}{4}}(1+\tau)^{-\frac{7}{4}}d\tau\\
				&\leq C(1+t)^{-\frac{7}{4}}(N_0+(\delta_0+N_0)^2+\delta_0(\delta_0+N_0)+\delta_0M(t))\\
				&\leq C(1+t)^{-\frac{7}{4}}(\delta_0+N_0+N_0^2+\delta_0M(t)).
			\end{align*}
Additionally, 
			\begin{align*}
			\|\nabla u\|_{L^2}^2\leq (1+t)^{-\frac{7}{2}}M(t)^2.	
			\end{align*}	
Substituting these bounds into \eqref{011305} gives
		\begin{align*}
			&\frac{d}{dt}(\|\nabla^2\phi\|_{H^1}^2+\|\nabla^2 u\|_{H^1}^2)
			+C(\|\nabla^2\phi\|_{H^1}^2+\|\nabla^2u\|_{H^1}^2)\nonumber\\
			&\quad \leq C(1+t)^{-\frac{7}{2}}((\delta_0+N_0+N_0^2+\delta_0M(t))^2+\delta_0M(t)^2).
		\end{align*}
Applying Gr\"{o}nwall's inequality, we deduce
		\begin{align*}
		\begin{aligned}
			&\|\nabla^2\phi(t)\|_{H^1}
			+\|\nabla^2u(t)\|_{H^1}  \leq C(1+t)^{-\frac{7}{4}}(\delta_0+N_0+N_0^2+\delta_0M(t)+\delta_0^{\frac{1}{2}}M(t)).
			\end{aligned}
		\end{align*}
Using the representation \eqref{011408} for $u$ and similar decay arguments, we also obtain  
		\begin{align*}
		\|\nabla u(t)\|_{H^2}\leq C(1+t)^{-\frac{7}{4}}(\delta_0+N_0+N_0^2+\delta_0M(t)+\delta_0^{\frac{1}{2}}M(t)).
		\end{align*}
Substituting these into the definition \eqref{011306} yields
		\begin{align*}
			M(t)\leq C(\delta_0+N_0+N_0^2+\delta_0M(t)+\delta_0^{\frac{1}{2}}M(t)),
		\end{align*}
and for $\delta_0$ small enough, we have 
		\begin{align*}
			M(t)\leq C(\delta_0+N_0+N_0^2).
		\end{align*}
		Therefore, we conclude
		\begin{align*}
			\|\nabla^2\phi(t)\|_{H^1}\leq C(\delta_0+N_0+N_0^2)(1+t)^{-\frac{7}{4}},\quad \|\nabla u(t)\|_{H^2}\leq C(\delta_0+N_0+N_0^2)(1+t)^{-\frac{7}{4}}.
		\end{align*}
		This completes the proof.
	\end{proof}

%
%
%
%
%
%
%
%
%
%
%
%
%
%
\subsubsection{Proof of Theorem \ref{thm3}: large-time behavior}

Combining the refined decay estimates established in Propositions \ref{p68} and \ref{p69}, we obtain the following decay rates for the solution:
	\begin{align*}
	\|\nabla^k\phi(t)\|_{L^2}&\leq C(1+t)^{-\frac{3}{4}-\frac{k}{2}},\quad k=0,1,2,\nonumber\\
	\|\nabla^ku(t)\|_{L^2}&\leq  C(1+t)^{-\frac{5}{4}-\frac{k}{2}},\quad k=0,1,\nonumber\\
	\|\nabla^3\phi(t)\|_{L^2}&\leq C(1+t)^{-\frac{7}{4}},\nonumber\\
	\quad \|\nabla ^2u(t)\|_{H^1}&\leq C(1+t)^{-\frac{7}{4}},
\end{align*}
where the positive constant $C>0$ is independent of time and depends on $\delta_0$ and $N_0$. These decay estimates, together with the global well-posedness of the Cauchy problem \eqref{Main31}-\eqref{ID1}, complete the proof of Theorem \ref{thm3}.
	
	We emphasize that these bounds provide sharp upper decay rates for the nonlinear system; the matching lower bounds, which confirm their optimality, will be established in the next section.

%
%
%
%
%
%
%
%
%
%
%
%
\section{Optimal decay estimates of solutions}\label{sec:opt-decay} 
In this section, we complete the proof of Theorem \ref{thm4} by establishing \emph{matching lower bounds} for the decay of solutions to the Euler system with damping. 
%
%
%
%
%
%
%
%
%
%
%
%
\subsection{Optimal decay for the linearized system}
We first revisit the linearized Euler system with damping and establish sharp decay estimates for its solutions. While the upper bounds for $\phi$ and $u$ obtained in Theorem \ref{thm3} already exhibit the optimal decay rates, the corresponding lower bounds require a more delicate analysis. In particular, we exploit the non-degeneracy condition on the initial density perturbation \eqref{in-data-optimal} to ensure that the leading diffusive mode persists at large times. Using the explicit Green function representation of the linearized problem and a careful low-frequency analysis in the Fourier space, we derive matching upper and lower bounds for the $L^2$-decay of $\phi$ and $u$, thereby capturing the precise asymptotic behavior of the linear system.

	\begin{proposition}\label{pro-li-d}
		Assume that all the conditions in Theorem \ref{thm4} hold. Let $(\phi,u)$ be the global classical solution to the linear system \eqref{L-system-M}. Then there exists $t_0>0$ such that for all $t\geq t_0$,
		\begin{align}\label{011411}
		\begin{aligned}
			d_*(1+t)^{-\frac{3}{4}}&\leq \|\phi(t)\|_{L^2}\leq C(1+t)^{-\frac{3}{4}},\\
			d_*(1+t)^{-\frac{5}{4}}&\leq \|u(t)\|_{L^2}\leq C(1+t)^{-\frac{5}{4}}, 
			\end{aligned}
		\end{align}
		where $d_*$ and  $C$ are positive constants independent of time.
	\end{proposition}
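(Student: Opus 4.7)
The upper bounds in \eqref{011411} are immediate consequences of Lemma \ref{gl-lam} applied with $k=0$ to $\phi_0, u_0 \in L^1\cap L^2$, so the substance of the proposition lies in the matching lower bounds. The plan is to apply Plancherel's identity, localize to the low-frequency region $\{|\xi|\le r_*\}$ for a suitably small $r_*\le r_0$, and use the explicit Fourier representation \eqref{01001} of the Green function together with the low-frequency expansions $\lambda_3=-|\xi|^2/\gamma+o(|\xi|^2)$ and $\lambda_4=-\gamma+|\xi|^2/\gamma+o(|\xi|^2)$ from Lemma \ref{lem-tezheng} to extract the leading diffusive mode $e^{\lambda_3 t}\sim e^{-|\xi|^2 t/\gamma}$. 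The non-degeneracy condition \eqref{in-data-optimal} is exactly what guarantees that this diffusive mode is actually excited.

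For the density, I would write $\hat\phi(\xi,t)=\hat G_{11}(\xi,t)\hat\phi_0(\xi)+\hat G_{12}(\xi,t)\hat u_0(\xi)$. At low frequency, a direct computation from \eqref{01001} gives
\[
\hat G_{11}(\xi,t)=\frac{\lambda_3 e^{\lambda_4 t}-\lambda_4 e^{\lambda_3 t}}{\lambda_3-\lambda_4}= e^{-|\xi|^2 t/\gamma}\bigl(1+o_{\xi\to 0}(1)\bigr),
\qquad |\hat G_{12}(\xi,t)|\lesssim \frac{|\xi|}{\gamma}\,e^{-|\xi|^2 t/\gamma}.
\]
Since $u_0\in L^1$ yields $\|\hat u_0\|_{L^\infty}\lesssim \|u_0\|_{L^1}<\infty$, shrinking $r_*$ so that $C r_* \|\hat u_0\|_{L^\infty}\le c_0/4$ and combining with $|\hat\phi_0|\ge c_0$ on $|\xi|<r_0$ from \eqref{in-data-optimal} yields $|\hat\phi(\xi,t)|\ge \tfrac{c_0}{4}e^{-|\xi|^2 t/\gamma}$ on $|\xi|\le r_*$. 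Plancherel then gives
\[
\|\phi(t)\|_{L^2}^2\ge \frac{c_0^2}{16}\int_{|\xi|\le r_*}e^{-2|\xi|^2 t/\gamma}\,d\xi= \frac{c_0^2}{16}\,t^{-3/2}\int_{|\eta|\le r_*\sqrt{t}}e^{-2|\eta|^2/\gamma}\,d\eta\ge d_*^2(1+t)^{-3/2},
\]
for $t\ge t_0$ chosen so that $r_*\sqrt{t_0}\ge 1$.

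For the velocity, the analogous identity is $\hat u=\hat G_{21}\hat\phi_0+\hat G_{22}\hat u_0$, but now the leading contribution carries an extra factor $|\xi|$: from \eqref{01001} and the eigenvalue expansions,
\[
|\hat G_{21}(\xi,t)|=|\xi|\left|\frac{e^{\lambda_3 t}-e^{\lambda_4 t}}{\lambda_3-\lambda_4}\right|\ge \frac{|\xi|}{2\gamma}\,e^{-|\xi|^2 t/\gamma},
\qquad |\hat G_{22}(\xi,t)|\lesssim e^{-\gamma t}+\frac{|\xi|^2}{\gamma^2}\,e^{-|\xi|^2 t/\gamma},
\]
valid on $|\xi|\le r_*$ for $t$ large. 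By further shrinking $r_*$ and choosing $t_0$ so that $e^{-\gamma t_0}\ll r_*^2(1+t_0)^{-5/4}$, the term $\hat G_{22}\hat u_0$ becomes subdominant compared to $\hat G_{21}\hat\phi_0$ throughout $|\xi|\le r_*$, giving $|\hat u(\xi,t)|\ge \tfrac{c_0|\xi|}{4\gamma}e^{-|\xi|^2 t/\gamma}$. The same rescaling then produces
\[
\|u(t)\|_{L^2}^2\ge c\int_{|\xi|\le r_*}|\xi|^2 e^{-2|\xi|^2 t/\gamma}\,d\xi\ge \bar d_*^2(1+t)^{-5/2},
\]
as claimed.

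The main obstacle I anticipate is making the low-frequency asymptotics of $\hat G_{ij}$ quantitative enough to absorb all $o(\cdot)$ errors in the eigenvalue expansions of Lemma \ref{lem-tezheng} into genuine constants, especially for $u$, where the desired lower bound is already $O(|\xi|)$ at the Fourier level, so the various remainder terms of size $O(|\xi|^2)e^{-|\xi|^2 t/\gamma}$ and $O(e^{-\gamma t})$ must be shown to be strictly smaller on $|\xi|\le r_*$. This is handled by tuning $r_*$ first (to dominate polynomial-in-$|\xi|$ errors) and then choosing $t_0$ large enough to make the purely dissipative contributions from $\lambda_4$ negligible, which is the step where the interplay between the smallness of $r_*$, the largeness of $t_0$, and the non-degeneracy constant $c_0$ is most delicate.
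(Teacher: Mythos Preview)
Your proposal is correct and follows essentially the same route as the paper: both arguments localize to low frequency, use the explicit Green function representation \eqref{01001} together with the eigenvalue expansions of Lemma \ref{lem-tezheng}, and isolate the diffusive mode $e^{\lambda_3 t}\hat\phi_0$ as the leading contribution, invoking \eqref{in-data-optimal} to make this quantitative.

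The one methodological difference is that you pursue a \emph{pointwise} lower bound on $|\hat\phi(\xi,t)|$ and $|\hat u(\xi,t)|$ over $\{|\xi|\le r_*\}$ before integrating, whereas the paper splits $\hat\phi$ (resp.\ $\hat u$) into a leading term plus a remainder and applies the reverse triangle inequality \emph{in $L^2$}: it bounds the leading piece from below in $L^2$ by $Cc_0(1+t)^{-3/4}$ (resp.\ $Cc_0(1+t)^{-5/4}$) and the remainder from above in $L^2$ by $C(1+t)^{-5/4}$ (resp.\ $C(1+t)^{-7/4}$), then subtracts. The paper's approach sidesteps exactly the obstacle you anticipate at the end---for $u$, the $e^{-\gamma t}$ contribution from $\hat G_{22}$ cannot be beaten pointwise by the $O(|\xi|)$ leading term near $\xi=0$, so a strict pointwise inequality on all of $\{|\xi|\le r_*\}$ fails (though only on a ball of exponentially small measure, which is why your integrated bound still goes through). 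Working at the $L^2$ level from the start makes this issue disappear and avoids the two-parameter tuning of $r_*$ and $t_0$ you describe.
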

	\begin{proof}
We work in the Fourier space and use the explicit Green function representation in \eqref{01001}. For $\hat{\phi}(\xi,t)$, decompose the solution into three parts:
		\begin{align}\label{011410}
		\begin{aligned}
			\hat{\phi}(\xi,t)&={\frac{\lambda_3 e^{\lambda_4t}
					-\lambda_4e^{\lambda_3t}}{\lambda_3-\lambda_4}}\hat{\phi}_0-\frac{e^{\lambda_3 t}
				-e^{\lambda_4 t}}{\lambda_3-\lambda_4}i\xi\cdot\hat{u}_0\\
			&=-\frac{\lambda_4e^{\lambda_3t}}{\lambda_3-\lambda_4}\hat{\phi}_0+\frac{\lambda_3e^{\lambda_4t}}{\lambda_3-\lambda_4}\hat{\phi}_0-\frac{e^{\lambda_3t}-e^{\lambda_4t}}{\lambda_3-\lambda_4}i\xi\cdot\hat{u}_0.
			\end{aligned}
		\end{align}
		
The leading contribution comes from the first term (low-frequency diffusion mode). On $|\xi|<r_0$, we apply Lemma \ref{lem-tezheng} to approximate $\lambda_3\sim -\frac{1}{\gamma}|\xi|^2$ and $\lambda_4\sim -\gamma$. Thus, we obtain		
		\begin{align*}
		\begin{aligned}
			\left\|\frac{\lambda_4e^{\lambda_3t}}{\lambda_3-\lambda_4}\hat{\phi}_0\right\|_{L^2}^2
			&=\int_{\mathbb{R}^3}\frac{\lambda_4^2e^{2\lambda_3t}}{|\lambda_3-\lambda_4|^2}|\hat{\phi}_0|^2d\xi\\
			&\geq \int_{|\xi|\leq r_0}\frac{\lambda_4^2e^{2\lambda_3t}}{|\lambda_3-\lambda_4|^2}|\hat{\phi}_0|^2d\xi \\
			&\geq Cc_0^2 \int_{\mathbb{R}^3}\frac{\lambda_4^2e^{2\lambda_3t}}{|\lambda_3-\lambda_4|^2}d\xi \\
			&\geq Cc_0^2(1+t)^{-\frac{3}{2}},
			\end{aligned}
		\end{align*}
where $c_0>0$ comes from the non-degeneracy assumption on $\hat{\phi}_0$ in \eqref{in-data-optimal}.		
		
The remaining terms in \eqref{011410} decay faster. Specifically,
		\begin{align*}
			\left\|\frac{\lambda_3e^{\lambda_4t}}{\lambda_3-\lambda_4}\hat{\phi}_0-\frac{e^{\lambda_3t}-e^{\lambda_4t}}{\lambda_3-\lambda_4}i\xi\cdot\hat{u}_0\right\|_{L^2}
			&\leq 
			Ce^{-\eta t }(\|\phi_0\|_{L^2}+\|u_0\|_{L^2})+
			C\left\||\xi|e^{-\frac{1}{\gamma}|\xi|^2t}\hat{u}_0\right\|_{L^2}\\
			&\leq C(1+t)^{-\frac{5}{4}}(\|\phi_0\|_{L^2}+\|u_0\|_{L^2}+\|u_0\|_{L^1})\\
			&\leq C(1+t)^{-\frac{5}{4}},
		\end{align*}
Combining these estimates, for $t\geq t_0$ we obtain
		\begin{align*}
			\|\phi(t)\|_{L^2}&\geq 	Cc_0(1+t)^{-\frac{3}{4}}-C(1+t)^{-\frac{5}{4}}\\
			&\geq 	Cc_0(1+t)^{-\frac{3}{4}}-C(1+t_0)^{-\frac{1}{2}}(1+t)^{-\frac{3}{4}}\\
			&\geq \frac{1}{2}Cc_0(1+t)^{-\frac{3}{4}}.
		\end{align*}
		This establishes the sharp decay rate for $\phi$.
		
		For $\hat{u}(t,\xi)$, using \eqref{01001}, we decompose:
		\begin{align*}
		\begin{aligned}
			\hat{u}(t,\xi)&=-\frac{e^{\lambda_3 t}
				-e^{\lambda_4 t}}{\lambda_3-\lambda_4}i\xi^{\top}\hat{\phi}_0+({e^{-\gamma t}I
				+\big(\frac{\lambda_3 e^{\lambda_3 t}
					-\lambda_4e^{\lambda_4t}}{\lambda_3-\lambda_4}
				-e^{-\gamma t}\big)\frac{\xi\xi^{\top}}{|\xi|^2}})\hat{u}_0\\
			&=-\frac{e^{\lambda_3 t}
			}{\lambda_3-\lambda_4}i\xi^{\top}\hat{\phi}_0+\frac{
				e^{\lambda_4 t}}{\lambda_3-\lambda_4}i\xi^{\top}\hat{\phi}_0\cr
				&\quad 
			+\left({e^{-\gamma t}I
				+\big(\frac{\lambda_3 e^{\lambda_3 t}
					-\lambda_4e^{\lambda_4t}}{\lambda_3-\lambda_4}
				-e^{-\gamma t}\big)\frac{\xi\xi^{\top}}{|\xi|^2}}\right)\hat{u}_0.
				\end{aligned}
		\end{align*}
The first term dominates at low frequency. On $|\xi|<r_0$, we deduce
		\begin{align*}
		\begin{aligned}
			\left\|\frac{e^{\lambda_3 t}
			}{\lambda_3-\lambda_4}i\xi^{\top}\hat{\phi}_0\right\|_{L^2}^2&\geq
			\int_{|\xi|<r_0}\frac{e^{2\lambda_3t}}{|\lambda_3-\lambda_4|^2}|\xi|^2|\hat{\phi}_0|^2d\xi
 \geq Cc_0^2(1+t)^{-\frac{5}{2}}.
			\end{aligned}
		\end{align*}
The remaining parts decay faster:
		\begin{align*}
		\begin{aligned}
			&\left\|\frac{
				e^{\lambda_4 t}}{\lambda_3-\lambda_4}i\xi^{\top}\hat{\phi}_0+({e^{-\gamma t}I
				+\big(\frac{\lambda_3 e^{\lambda_3 t}
					-\lambda_4e^{\lambda_4t}}{\lambda_3-\lambda_4}
				-e^{-\gamma t}I\big)\frac{\xi\xi^{\top}}{|\xi|^2}})\hat{u}_0\right\|_{L^2}\\
			&\quad \leq 
			Ce^{-\eta t }(\|\phi_0\|_{L^2}+\|u_0\|_{L^2})+Ce^{-\gamma t}\|u_0\|_{L^2}+
			C\left\||\xi|^2e^{-\frac{1}{\gamma}|\xi|^2t}\hat{u}_0\right\|_{L^2}\\
			&\quad \leq C(1+t)^{-\frac{7}{4}}(\|\phi_0\|_{L^2}+\|u_0\|_{L^2}+\|u_0\|_{L^1})\\
			&\quad \leq C(1+t)^{-\frac{7}{4}}.
			\end{aligned}
		\end{align*}
Thus, we have
		\begin{align*}		
			\|u(t)\|_{L^2} &=	\|\hat{u}(t)\|_{L^2} \geq 	Cc_0(1+t)^{-\frac{5}{4}}-C(1+t)^{-\frac{7}{4}} \geq \frac{1}{2}Cc_0(1+t)^{-\frac{5}{4}}.
		\end{align*}
Letting $d_*=\frac{1}{2}Cc_0$ completes the proof of \eqref{011411}.
	\end{proof}
 
	%
	%
	%
	%
	%
	%
	%
	%
	%

	\subsection{Extension to the nonlinear system and proof of Theorem \ref{thm4}}
We now extend the sharp decay estimates of the linearized system to the full nonlinear Euler system with damping, thus completing the proof of Theorem \ref{thm4}. Using Duhamel's principle, the solution is decomposed into the linear evolution of the initial data and a time-convolution of the nonlinear source terms. While the nonlinear contributions decay at a comparable rate to the linear part, their coefficients can be made sufficiently small due to the smallness of the initial data. This ensures that the leading diffusive mode of the linearized solution dominates the long-time behavior. By combining the sharp decay of the linearized system with refined convolution estimates for the nonlinear terms, we derive matching upper and lower bounds for the $L^2$-decay of $\phi$ and $u$, as well as for their derivatives.

Our starting point is the Duhamel representation of the solution:
	\begin{align*}
		&\phi(x,t)=G_{11}*\phi_0+G_{12}*u_0+\int_0^t (G_{11}(t-\tau)*f_1(\tau)+G_{12}(t-\tau)*f_2(\tau))d\tau,\\
		&u(x,t)=G_{21}*\phi_0+G_{22}*u_0+	\int_0^t (G_{21}(t-\tau)*f_1(\tau)+G_{22}(t-\tau)*f_2(\tau))d\tau,
	\end{align*}
where $G_{ij}$ denotes the Green's functions of the linearized system, and $f_1,f_2$ are the nonlinear forcing terms. 
To obtain sharp lower bounds for $\phi$ and $u$, it suffices to control the Duhamel terms and show that they decay strictly faster than the leading linear contributions.
	
Note that
	\begin{align*}
		&\int_0^t \|G_{11}(t-\tau)*f_1(\tau)+G_{12}(t-\tau)*f_2(\tau)\|_{L^2}d\tau \\
		&\quad \leq C\int_0^t(1+t-\tau)^{-\frac{3}{4}}(\|(u\cdot \nabla\phi)(\tau)\|_{L^1}+\|(u\cdot \nabla V)(\tau)\|_{L^1}+\|(u\cdot\nabla u)(\tau)\|_{L^1})d\tau\\
		&\qquad+C\int_0^te^{-\eta(t-\tau)}(\|(u\cdot \nabla\phi)(\tau)\|_{L^2}+\|(u\cdot \nabla V)(\tau)\|_{L^2}+\|(u\cdot\nabla u)(\tau)\|_{L^2})d\tau.
	\end{align*}
By Proposition \ref{p69}, we obtain
	\begin{align}\label{N001}
	\begin{aligned}
		&\|(u\cdot \nabla\phi)(\tau)\|_{L^1}+\|(u\cdot \nabla V)(\tau)\|_{L^1}+\|(u\cdot\nabla u)(\tau)\|_{L^1}\\
		&\quad \leq \|u\|_{L^2}\|\nabla\phi\|_{L^2}++\|u\|_{L^2}\|\nabla V\|_{L^2}+\|u\|_{L^2}\|\nabla u\|_{L^2}\\
		&\quad \leq C(1+\tau)^{-\frac{5}{4}}(\delta_0+ N_0)\|\nabla(\phi,u)\|_{L^2}
		+C(1+\tau)^{-\frac{5}{4}}\delta_0(\delta_0+ N_0),
		\end{aligned}
	\end{align}
	and 
	\begin{align}\label{N002}
	\begin{aligned}
		&\|(u\cdot \nabla\phi)(\tau)\|_{L^2}+\|(u\cdot \nabla V)(\tau)\|_{L^2}+\|(u\cdot\nabla u)(\tau)\|_{L^2}\\
		&\quad \leq \|u\|_{L^\infty}\|\nabla\phi\|_{L^2}+\|u\|_{L^\infty}\|\nabla V\|_{L^2}+\|u\|_{L^\infty}\|\nabla u\|_{L^2}\\
		&\quad \leq C\|\nabla u\|_{H^1}\|\nabla\phi\|_{L^2}+C\|\nabla u\|_{H^1}\|\nabla V\|_{L^2}+C\|\nabla u\|_{H^1}\|\nabla u\|_{L^2}\\
		&\quad \leq C(1+\tau)^{-\frac{7}{4}}(\delta_0+ N_0+N_0^2)\|\nabla(\phi,u)\|_{L^2}
		+C(1+\tau)^{-\frac{7}{4}}\delta_0(\delta_0+ N_0+N_0^2).
		\end{aligned}
	\end{align}
It then follows from \eqref{N001}, \eqref{N002}, and \eqref{D1} that
	\begin{align*}
		&\int_0^t \|G_{11}(t-\tau)*f_1(\tau)+G_{12}(t-\tau)*f_2(\tau)\|_{L^2}d\tau\nonumber\\
		&\quad \leq C(\delta_0+ N_0)\int_0^t(1+t-\tau)^{-\frac{3}{4}}(1+\tau)^{-\frac{5}{4}}(\|\nabla (\phi,u)(\tau)\|_{L^2}+\delta_0)d\tau\nonumber\\
		&\qquad +C(\delta_0+ N_0+N_0^2)\int_0^te^{-\eta(t-\tau)}(1+\tau)^{-\frac{7}{4}}(\|\nabla (\phi,u)(\tau)\|_{L^2}+\delta_0)d\tau\nonumber\\
		&\quad \leq C(\delta_0+ N_0+N_0^2)\left(\int_0^t\big((1+t-\tau)^{-\frac{3}{2}}(1+\tau)^{-\frac{5}{2}}+e^{-2\eta(t-\tau)}(1+\tau)^{-\frac{5}{2}}\big)d\tau\right)^{\frac{1}{2}}\nonumber\\
		&\hspace{6cm} \times \Big(\int_0^t(\|\nabla (\phi,u)(\tau)\|_{L^2}^2)d\tau\Big)^{\frac{1}{2}} \cr
		&\qquad +C\delta_0(\delta_0+N_0+N_0^2)(1+t)^{-\frac{3}{4}}\nonumber\\
		&\quad \leq C(1+t)^{-\frac{3}{4}}(\delta_0^2+\delta_0 N_0+\delta_0N_0^2). 
	\end{align*}
	Thus, by Parseval's identity, Proposition \ref{pro-li-d}, we deduce for sufficiently small $\delta_0$,  
	\begin{align}\label{thm3-p1}
		\begin{aligned}
		\|\phi(t)\|_{L^2}
		&\geq \|G_{11}*\phi_0+G_{12}*u_0\|_{L^2} \cr
		&\quad -\int_0^t \|G_{11}(t-\tau)*f_1(\tau)+G_{12}(t-\tau)*f_2(\tau)\|_{L^2}d\tau\\
		&\geq d_*(1+t)^{-\frac{3}{4}}- C(1+t)^{-\frac{3}{4}}
		(\delta_0^2+\delta_0 N_0+\delta_0N_0^2)\\
		&\geq \frac{1}{2}d_*(1+t)^{-\frac{3}{4}}.
				\end{aligned}
	\end{align}
A similar argument gives 
	\begin{align*}
		\|u(t)\|_{L^2}\geq 	\frac{1}{2}d_*(1+t)^{-\frac{5}{4}}.
	\end{align*}
	
Finally, we extend these bounds to higher-order derivatives using the interpolation inequality. We observe for $t$ large enough
	\begin{align}\label{thm3-p4}
	\begin{aligned}
		\|\Lambda^{-1}\phi(t)\|_{L^2}&\leq \|\Lambda^{-1}\phi^\ell(t)\|_{L^2}
		+\|\Lambda^{-1}\phi^h(t)\|_{L^2}\\
		&\leq C(1+t)^{-\frac{1}{4}}+C\int_0^t(1+t-\tau)^{-\frac{1}{4}}\|\nabla (\phi,u)(\tau)\|_{L^1}d\tau+C\|\phi^h(t)\|_{L^2}\\
		&\leq C(1+t)^{-\frac{1}{4}}+C\int_0^t(1+t-\tau)^{-\frac{1}{4}}(1+\tau)^{-\frac{5}{4}}d\tau+C\|\phi(t)\|_{L^2}\\
		&\leq C(1+t)^{-\frac{1}{4}}.
		\end{aligned}
	\end{align}
	It then follows from Lemma \ref{lema5} that 
\[
		\|\phi\|_{L^2}\leq C\|\Lambda^{-1}\phi\|_{L^2}^{\frac{k}{k+1}}
		\|\nabla^k \phi\|_{L^2}^{\frac{1}{k+1}},
\]
	which, together with \eqref{thm3-p1} and \eqref{thm3-p4}, yields for $k=0,1,2$ that 
\[
		\|\nabla^k\phi(t)\|_{L^2}=\|\nabla^k\phi(t)\|_{L^2}\geq d_0(1+t)^{-\frac{3}{4}-\frac{k}{2}},
\]
for some $d_0 > 0$ independent of $t$.
	
	Similarly, there exists a positive constant $\bar{d}_0$ such that 
	\begin{align*}
		\|\nabla^k u(t)\|_{L^2}\geq \bar{d}_0(1+t)^{-\frac{5}{4}-\frac{k}{2}},\quad k=0,1,
	\end{align*}
	which completes the proof of Theorem \ref{thm4}.

%
%
%
%
%
%
%
%
%
%
%
%
%
\section{Blow-up analysis for the pressureless damped Euler system} \label{Sec:thm2}
In this section, we analyze the pressureless Euler system with damping, focusing on finite-time singularity formation. Specifically, we consider
\begin{equation}\label{Main111}
	\left\{
	\begin{aligned}	
		&\partial_t\rho+\textrm{div}(\rho u)=0,\\
		&\partial_t(\rho u)+\text{div}(\rho u\otimes u)+\gamma\rho u=0,
	\end{aligned}
	\right.
\end{equation}
with initial data and far-field condition
\begin{align}\label{BC11}
	(\rho,u)(x,0)=(\rho_0,u_0),\quad \text{and}\quad  \lim _{|x| \rightarrow+\infty}(\rho, u)(x,t)=\left(1, 0\right).
\end{align}

Compared to the isothermal system, \eqref{Main111} lacks a pressure term, so the dispersive and smoothing effects of pressure are absent. Although damping provides a stabilizing mechanism, it acts only on momentum and cannot fully counterbalance nonlinear compression. This structural difference necessitates a separate analysis: we show that even for small initial data, bounded damping may fail to prevent singularity formation, leading to finite-time blow-up.

Before turning to the blow-up analysis, we recall the local existence of classical solutions, which follows from standard energy estimates for symmetrizable hyperbolic systems (see \cite{HKK2014}).
 \begin{theorem}\label{locthm}
Assume that the initial data satisfy $(\rho_0-1,u_0)\in H^3(\mathbb{R}^3) \times H^4(\R^3)$. Then there exists a short time $T_0>0$ such that the system  \eqref{Main111}-\eqref{BC11} admits a unique classical solution $(\phi,u)$ satisfying
		\begin{align*}
			&\rho - 1 \in C^0([0,T_0];H^3(\mathbb{R}^3))\cap C^1([0,T_0];H^2(\mathbb{R}^3)),\\
			&u\in C^0([0,T_0];H^3(\mathbb{R}^4))\cap C^1([0,T_0];H^3(\mathbb{R}^3)).
		\end{align*}
\end{theorem}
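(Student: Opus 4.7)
The approach reduces the system to a convenient non-conservative form and exploits the fact that, in the absence of pressure, the velocity equation decouples from the density. Expanding the momentum equation of \eqref{Main111} and using the continuity equation to eliminate $\partial_t \rho$ (permissible wherever $\rho > 0$, which holds initially by assumption), one finds that $u$ satisfies the closed damped Burgers-type system
\begin{equation*}
\partial_t u + u \cdot \nabla u + \gamma u = 0,
\end{equation*}
while the perturbation $\sigma := \rho - 1$ satisfies the linear transport equation with coefficient $u$,
\begin{equation*}
\partial_t \sigma + u \cdot \nabla \sigma + (1+\sigma)\,\mathrm{div}\, u = 0.
\end{equation*}
The plan is therefore to first construct $u$ solving the closed velocity equation in $H^4(\mathbb{R}^3)$, then solve the linear equation for $\sigma$ in $H^3(\mathbb{R}^3)$ with $u$ as a given coefficient, and finally read off the time regularity directly from the equations.

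For the velocity step I would use a Picard-type iteration: given $u^n$, define $u^{n+1}$ as the solution of
\begin{equation*}
\partial_t u^{n+1} + u^n \cdot \nabla u^{n+1} + \gamma u^{n+1} = 0, \qquad u^{n+1}(0) = u_0,
\end{equation*}
starting from $u^0 \equiv u_0$. Applying $\nabla^k$ for $0 \le k \le 4$, pairing with $\nabla^k u^{n+1}$ in $L^2(\mathbb{R}^3)$, and using the commutator estimate of Lemma \ref{lema2} to control $[\nabla^k, u^n \cdot \nabla]\, u^{n+1}$, one obtains an inequality of the form
\begin{equation*}
\frac{d}{dt}\|u^{n+1}(t)\|_{H^4}^2 + 2\gamma \|u^{n+1}(t)\|_{H^4}^2 \lesssim \bigl(1 + \|u^n(t)\|_{H^4}\bigr)\|u^{n+1}(t)\|_{H^4}^2.
\end{equation*}
A standard bootstrap yields a common existence time $T_0 > 0$ on which $\|u^n\|_{L^\infty(0,T_0;H^4)}$ remains uniformly bounded, and contraction in the lower-order norm $L^\infty(0,T_0;H^3)$, obtained by subtracting the equations for two consecutive iterates and running an energy estimate on the difference, produces a Cauchy sequence whose limit $u$ lies in $L^\infty(0,T_0; H^4)$. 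Using the equation to trade time for space, one upgrades this to $u \in C^0([0,T_0]; H^4) \cap C^1([0,T_0]; H^3)$.

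With $u$ in hand, the equation for $\sigma$ is a linear transport problem with smooth coefficients. Standard commutator estimates on $[\nabla^k, u]\cdot\nabla\sigma$ for $0 \le k \le 3$, together with the product rule applied to $(1+\sigma)\,\mathrm{div}\, u$, yield a Gronwall-type inequality
\begin{equation*}
\frac{d}{dt}\|\sigma(t)\|_{H^3}^2 \lesssim \bigl(1 + \|u(t)\|_{H^4}\bigr)\|\sigma(t)\|_{H^3}^2 + \|u(t)\|_{H^4}\|\sigma(t)\|_{H^3},
\end{equation*}
from which a unique solution $\sigma \in C^0([0,T_0]; H^3) \cap C^1([0,T_0]; H^2)$ is obtained after possibly shrinking $T_0$. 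The Sobolev embedding $H^3(\mathbb{R}^3) \hookrightarrow L^\infty(\mathbb{R}^3)$ and continuity of $\sigma$ in time ensure that $\rho = 1 + \sigma > 0$ on $[0, T_0]$, retroactively justifying the division by $\rho$ used to decouple the system. Uniqueness of $(\rho, u)$ follows in both steps by subtracting two candidate solutions and running an $L^2$ energy estimate combined with Gronwall.

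The principal technical obstacle is the standard loss-of-derivative issue for quasilinear hyperbolic systems: estimating $u^n \cdot \nabla u^{n+1}$ at the highest derivative level requires a commutator bound, which Lemma \ref{lema2} provides. A secondary but structural subtlety is that $\rho$ does not belong to any $L^p$ space on $\mathbb{R}^3$ (since $\rho \to 1$ at infinity), so the analysis must proceed in terms of the perturbation $\sigma \in H^3$; positivity of the full density is then ensured for short times by the $L^\infty$ bound $\|\sigma(t) - \sigma_0\|_{L^\infty} \le C \|\sigma(t) - \sigma_0\|_{H^3}$, which is small for $t$ near $0$.
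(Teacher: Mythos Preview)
Your proposal is correct, and it takes a genuinely different route from what the paper does. The paper does not actually supply a proof of this theorem: it simply states that local existence ``follows from standard energy estimates for symmetrizable hyperbolic systems'' and cites \cite{HKK2014}. In other words, the paper treats the full coupled system $(\rho,u)$ as a first-order symmetrizable hyperbolic system and invokes the Kato--Majda local theory in one stroke.

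By contrast, you exploit the special structure of the \emph{pressureless} case: dividing the momentum equation by $\rho$ eliminates the density entirely from the velocity dynamics, yielding the closed damped Burgers system $\partial_t u + u\cdot\nabla u + \gamma u = 0$, after which the continuity equation becomes a linear transport equation for $\sigma=\rho-1$ with $u$ as a known coefficient. This decoupling is correct and is precisely what makes the pressureless system tractable; it is also why the paper can later write $u(x,t)=u_0(X(0;x,t))e^{-\gamma t}$ explicitly along characteristics in the blow-up argument. Your two-step scheme (solve for $u$ in $H^4$, then for $\sigma$ in $H^3$) is more elementary than invoking general symmetrizable theory, and it makes transparent why the asymmetric regularity hypothesis $u_0\in H^4$ versus $\rho_0-1\in H^3$ is natural: the transport equation for $\sigma$ needs one more derivative of $u$ than of $\sigma$ to close at the $H^3$ level. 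The paper's cited approach has the advantage of applying uniformly whether or not pressure is present, but for this specific system your argument is cleaner and more self-contained.
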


\subsection{Proof of Theorem \ref{thm2}} We now turn to the proof of Theorem \ref{thm2}. The key idea is to construct a set of \emph{weighted energy-momentum functionals} that amplify the compressive modes of the flow and lead to a contradiction with their positivity. This approach builds on the virial-type method but is adapted to incorporate the effects of linear damping.

\medskip
\noindent \emph{Step 1: Weighted functionals and differential inequality.}  To study global behavior, we recall the weighted momentum and mass-like quantities:
	\begin{align*}
	 A_1(t)=\int_{\mathbb{R}^3}\rho u\cdot\nabla H dx,\quad A_2(t)=\int_{\mathbb{R}^3}\rho Hdx>0,\quad H(x)=\gamma e^{-\frac{|x|^2}{\gamma}}.
	\end{align*}
Differentiating in time and using the equations of motion, we derive  
	\begin{align*}
		\frac{dA_2(t)}{dt}=\int_{\mathbb{R}^3}\partial_t\rho H\,dx= \int_{\mathbb{R}^3}\rho u\cdot\nabla H\,dx =A_1(t),
	\end{align*}
	and 
	\begin{align*}
		\frac{dA_1(t)}{dt}&=\int_{\mathbb{R}^3}\partial_t(\rho u)\cdot\nabla H dx\\
		&=-\int_{\mathbb{R}^3}(\text{div}(\rho u\otimes u)+\gamma \rho u)\cdot\nabla Hdx\\
		&=\int_{\mathbb{R}^3}\rho u\otimes u:D^2Hdx-\gamma A_1(t). 	
	\end{align*}
Note that
	\begin{align*}
		\nabla H=-2xe^{-\frac{|x|^2}{\gamma}},\quad \Delta H=-6e^{-\frac{|x|^2}{\gamma}}+\frac{4|x|^2}{\gamma}e^{-\frac{|x|^2}{\gamma}},	\end{align*}
	which implies that 
	\begin{align*}
		|D^2H|^2\leq 10e^{-\frac{|x|^2}{\gamma}}\leq \frac{10H}{\gamma}.
	\end{align*}
Define the particle trajectory $X(s;x,t)$ passing though $(x,t)$ by
		\begin{equation*}
			\left\{
			\begin{aligned}
				&\partial_s X(s;x,t) = u(X(s;x,t), s),\\
				&X(t;x,t)=x.
			\end{aligned}
			\right.
		\end{equation*}
		Then along this particle trajectory, we obtain 
		\begin{equation*}
		\partial_s u(X(s;x,t), s) = -\gamma u(X(s;x,t), s),
		\end{equation*}
		and thus 
		\begin{equation*}
			 u(x,t)=u_0(X(0;x,t))e^{-\gamma t}.
		\end{equation*}
In particular, we have
		\begin{align*}
			\|u(t)\|_{L^\infty}\leq \|u_0\|_{L^\infty}\leq C_*\|u_0\|_{H^3}
		\end{align*}
		for some $C_* > 0$ independent of $t$. This yields
		\begin{align*}
			\frac{dA_1(t)}{dt}+\gamma A_1(t)&=\int_{\mathbb{R}^3}\rho u\otimes u: D^2Hdx\\
			&\leq \|u\|_{L^\infty}\left(\int_{\mathbb{R}^3}\rho |u|^2dx\right)^{\frac{1}{2}}\left(\int_{\mathbb{R}^3}\rho |D^2H|^2dx\right)^{\frac{1}{2}}\\	
			&\leq \frac{C_*\|u_0\|_{H^3}}{\gamma^{\frac{1}{2}}}\left(\int_{\mathbb{R}^3}\rho |u|^2dx+\int_{\mathbb{R}^3}\rho Hdx\right)\\
			&\leq \frac{C_*a_0}{\gamma^{\frac{1}{2}}}\left(\int_{\mathbb{R}^3}\rho |u|^2dx+\int_{\mathbb{R}^3}\rho Hdx\right)\\
			&\leq D_* \left(\int_{\mathbb{R}^3}\rho |u|^2dx+\int_{\mathbb{R}^3}\rho Hdx\right),
		\end{align*}
		where $D_*=\frac{C_*a_0}{\gamma^{\frac{1}{2}}}$.   Here we remind that $\rho(t) \notin L^1(\R^3)$.

	The basic energy estimate gives
	\begin{align*}
		\frac{1}{2}\frac{d}{dt}\int_{\mathbb{R}^3}\rho|u|^2dx+\gamma\int_{\mathbb{R}^3}\rho |u|^2dx=0,
	\end{align*}
and thus
	\begin{align*}
		\int_{\mathbb{R}^3}\rho|u|^2dx\leq \int_{\mathbb{R}^3}\rho_0|u_0|^2dx=E_0.
	\end{align*}
	Combining these estimates together, we obtain a closed second-order differential inequality for $A_2(t)$:
	\begin{align*}
		\frac{d^2A_2(t)}{dt^2}+\gamma \frac{dA_2(t)}{dt}\leq D_*A_2(t)+D_*E_0.	
	\end{align*}
	Choosing $c_1=\gamma,~c_2=D_*,~c_3=D_*E_0$, according to Lemma \ref{lem2}, we deduce from the above inequality that 
	\begin{align*}
		A_2(t)&\leq\left(A_{2}(0)+\frac{c_3}{\beta(\beta +c_1)}+\frac{1}{\gamma + 2\beta}\left(A_1(0)-\beta A_2(0)-\frac{c_3}{\beta +\gamma}\right)\right)e^{\beta t}\\
		&\quad -\frac{1}{\gamma+ 2\beta}\left(A_1(0)-\beta A_2(0)-\frac{c_3}{\beta +\gamma }\right)e^{-(\gamma+\beta)t}-\frac{c_3}{\beta(\beta+\gamma)},
	\end{align*}
	where $\beta>0$ is given by
	\begin{align*}
		\beta=\frac{-\gamma+\sqrt{\gamma^2 + 4c_2}}{2}.
	\end{align*}

\medskip
\noindent \emph{Step 2: Blow-up criterion.}	
	Our goal is to ensure that $A_2(t)$ becomes negative in finite time, contradicting the positivity of $\rho$. For this, it suffices to require
	\begin{align}\label{A01}
		A_2(0)+\frac{c_3}{\beta(\beta +c_1)}+\frac{1}{\gamma + 2\beta}\left(A_1(0)-\beta A_2(0)-\frac{c_3}{\beta +\gamma}\right)<0.
	\end{align}
Rearranging, this is equivalent to
	\begin{align*}
		D_*(A_2(0)+E_0)+\beta A_1(0)<0.
	\end{align*}
Since $A_1(0)<0$, this implies
	\begin{align*}
		\beta>-\frac{D_*(A_2(0)+E_0)}{A_1(0)} =:F_0.
	\end{align*}
If
	\begin{align*}
		M_*:=\frac{-A_1(0)}{A_2(0)+E_0}>\left(8C_*^2a_0^2 \right)^{\frac{1}{5}} \quad \mbox{and} \quad 	4	\left(\frac{C_*a_0}{M_*^2} \right)^2<\gamma <\frac{1}{2}M_*,
	\end{align*}
then the above conditions ensure \eqref{A01}. Thus, $A_2(t)$ becomes negative in finite time, which is impossible for a strictly positive density, forcing finite-time blow-up of solutions. This completes the proof of Theorem \ref{thm2}.

%
%
%
%
%
%
%
%

\bigbreak\noindent
\noindent \textbf{Acknowledgments.~} The work of the first author was supported by National Research Foundation of Korea (NRF) grant funded by the Korea government (MSIP) (No. 2022R1A2C1002820 and RS-2024-00406821). The work of the second author was supported by National Natural Science Foundation of China (Grant No. 12501293) and Anhui Provincial Natural Science Foundation (Grant No. 2408085QA031). The work of the third author was  supported  by China Scholarship Council(Grant No.202406880015)  and National Natural Science Foundation of China (Grant No.12001033).

\appendix

\section{Proof of the high-order derivative estimates in Proposition \ref{p31}}\label{app:high-order}

In this appendix, we provide the full details of \emph{Step 2} in the proof of Proposition \ref{p31}, 
which establishes the high-order energy estimates for $(\phi,u)$. 

We first recall the reformulated system:
\begin{equation}\label{phi-u-system}
 	\left\{
 	\begin{aligned}	
 		&\partial_t\phi+\text{div}u=-u\cdot\nabla \phi+u\cdot\nabla V,\\
 		&\partial_tu+\nabla \phi+\gamma u=-u\cdot\nabla u.
 	\end{aligned}
 	\right.
 \end{equation}
Apply $\nabla^k$ ($k=1,2,3$) to \eqref{phi-u-system} and take $L^2$ inner products with $\nabla^k\phi$ and $\nabla^k u$ to deduce
	\begin{align*}
		&\frac{1}{2}\frac{d}{dt}\|\nabla^k(\phi, u)\|_{L^2}^2+\gamma\|\nabla^k u\|_{L^2}^2\\
		&\quad =\int_{\mathbb{R}^3}\nabla^k(-u\cdot\nabla\phi+ u\cdot\nabla V)\cdot\nabla^k\phi \,dx
		+\int_{\mathbb{R}^3}\nabla^k(-u\cdot\nabla u)\cdot \nabla^k u\,dx.
	\end{align*}	
Note that
	\begin{align*}
		&\int_{\mathbb{R}^3}\nabla^k(-u\cdot\nabla\phi+ u\cdot\nabla V)\cdot\nabla^k\phi \,dx\nonumber\\
		&\quad =-\int_{\mathbb{R}^3}(\nabla^k(u\cdot\nabla\phi)-u\cdot\nabla^k\nabla\phi)\cdot\nabla^k\phi \,dx-\int_{\mathbb{R}^3}u\cdot\nabla^k\nabla\phi\cdot\nabla^k\phi \,dx\nonumber\\
		&\qquad +\int_{\mathbb{R}^3}(\nabla^k(u\cdot\nabla V)-u\cdot\nabla^k\nabla V)\cdot\nabla^k\phi \,dx+\int_{\mathbb{R}^3}u\cdot\nabla^k\nabla V\cdot\nabla^k\phi \,dx.
	\end{align*}
	By commutator estimates given in Lemma \ref{lema2}, we obtain
	\begin{align*}
		&-\int_{\mathbb{R}^3}(\nabla^k(u\cdot\nabla\phi)-u\cdot\nabla^k\nabla\phi)\cdot\nabla^k\phi \, dx-\int_{\mathbb{R}^3}u\cdot\nabla^k\nabla\phi\cdot\nabla^k\phi \,dx\nonumber\\
		&\quad \leq \|\nabla^k(u\cdot\nabla\phi)-u\cdot\nabla^k\nabla\phi\|_{L^2}\|\nabla^k\phi\|_{L^2}
		+C\|\text{div}u\|_{L^\infty}\|\nabla^k\phi\|_{L^2}^2\nonumber\\
		&\quad \leq C(\|\nabla u\|_{L^\infty}\|\nabla^k\phi\|_{L^2}+\|\nabla^ku\|_{L^2}\|\nabla\phi\|_{L^\infty})\|\nabla^k\phi\|_{L^2}+C\|\nabla^2u\|_{H^1}\|\nabla^k\phi\|_{L^2}^2\nonumber\\
		&\quad \leq C(\|\nabla^2 u\|_{H^1}\|\nabla^k\phi\|_{L^2}+\|\nabla^ku\|_{L^2}\|\nabla^2\phi\|_{H^1})\|\nabla^k\phi\|_{L^2}+C\|\nabla^2u\|_{H^1}\|\nabla^k\phi\|_{L^2}^2.
	\end{align*}	
	Summing the above inequality over $k=1,2,3$ gives
	\begin{align}\label{021701}
	\begin{aligned}
		&\sum_{k=1}^3\left(-\int_{\mathbb{R}^3}(\nabla^k(u\cdot\nabla\phi)-u\cdot\nabla^k\nabla\phi)\cdot\nabla^k\phi \,dx-\int_{\mathbb{R}^3}u\cdot\nabla^k\nabla\phi\cdot\nabla^k\phi \,dx\right)\\
		&\quad \leq C \|\nabla u\|_{H^2}\|\nabla \phi\|_{H^2}^2 \leq C\varepsilon \gamma\|\nabla u\|_{H^2}^2+C \varepsilon^{-1} \gamma^{-1}\|\nabla \phi\|_{H^2}^4
		\end{aligned}
	\end{align}
  for any $\varepsilon > 0$. 
	Similarly, we also find for $k=1,2,3$ that
	\begin{align*}
		&\int_{\mathbb{R}^3}(\nabla^k(u\cdot\nabla V)-u\cdot\nabla^k\nabla V)\cdot\nabla^k\phi \,dx+\int_{\mathbb{R}^3}u\cdot\nabla^k\nabla V\cdot\nabla^k\phi \,dx\nonumber\\
		&\quad \leq C(\|\nabla u\|_{L^\infty}\|\nabla^kV\|_{L^2}+\|\nabla^ku\|_{L^2}\|\nabla V\|_{L^\infty})\|\nabla^k\phi\|_{L^2}+C\|u\|_{L^\infty}\|\nabla^{k+1}V\|_{L^2}\|\nabla^k\phi\|_{L^2}\nonumber\\
		&\quad \leq C(\|\nabla^2 u\|_{H^1}\|\nabla^kV\|_{L^2}+\|\nabla^ku\|_{L^2}\|\nabla^2 V\|_{H^1})\|\nabla^k\phi\|_{L^2}+C\|\nabla u\|_{H^1}\|\nabla^{k+1}V\|_{L^2}\|\nabla^k\phi\|_{L^2},
	\end{align*}
and thus
	\begin{align*}
		&\sum_{k=1}^3\left(\int_{\mathbb{R}^3}(\nabla^k(u\cdot\nabla V)-u\cdot\nabla^k\nabla V)\cdot\nabla^k\phi \,dx+\int_{\mathbb{R}^3}u\cdot\nabla^k\nabla V\cdot\nabla^k\phi \,dx\right)\nonumber\\
		&\quad \leq C(\|\nabla^2u\|_{H^1}\|\nabla V\|_{H^2}+\|\nabla u\|_{H^2}\|\nabla^2V\|_{H^1})\|\nabla\phi\|_{H^2}+C\|\nabla u\|_{H^1}\|\nabla^2V\|_{H^2}\|\nabla\phi\|_{H^2}\nonumber\\
		&\quad \leq C\varepsilon\gamma\|\nabla u\|_{H^2}^2+C \varepsilon^{-1} \gamma^{-1}\epsilon_0^2\|\nabla\phi\|_{H^2}^2.
	\end{align*}
	This together with \eqref{021701} yields 
	\begin{align*}
		&\sum_{k=1}^3 \int_{\mathbb{R}^3}\nabla^k(-u\cdot\nabla\phi+ u\cdot\nabla V)\cdot\nabla^k\phi \,dx   \cr
		&\quad \leq C\varepsilon \gamma\|\nabla u\|_{H^2}^2+C \varepsilon^{-1} \gamma^{-1}\|\nabla \phi\|_{H^2}^4+C \varepsilon^{-1} \gamma^{-1}\epsilon_0^2\|\nabla\phi\|_{H^2}^2.
	\end{align*}
We also have
	\begin{align*}
		\sum_{k=1}^3 \int_{\mathbb{R}^3}\nabla^k(-u\cdot\nabla u)\cdot \nabla^k u\,dx \leq 
		C\|\nabla^2u\|_{H^1}\|\nabla u\|_{H^2}^2.
	\end{align*}
Combining the above estimates gives
	\begin{align*}
		&\frac{1}{2}\frac{d}{dt}\|\nabla(\phi, u)\|_{H^2}^2+\gamma(1-C\gamma^{-1}\|\nabla u\|_{H^2})\| \nabla u\|_{H^2}^2\nonumber\\
		&\quad \leq C\varepsilon \gamma\|\nabla u\|_{H^2}^2+C \varepsilon^{-1} \gamma^{-1}\|\nabla \phi\|_{H^2}^4+C \varepsilon^{-1}  \gamma^{-1}\epsilon_0^2\|\nabla\phi\|_{H^2}^2.
	\end{align*}	
 Choosing $\varepsilon$ small enough, we arrive at 
	\begin{align}\label{022501}
		\frac{d}{dt}\|\nabla(\phi, u)\|_{H^2}^2+\gamma(1-C\gamma^{-1}\|\nabla u\|_{H^2})\|\nabla u\|_{H^2}^2 \leq C\gamma^{-1}\|\nabla \phi\|_{H^2}^4+C\gamma^{-1}\epsilon_0^2\|\nabla\phi\|_{H^2}^2.
	\end{align}

To control $\|\nabla\phi\|_{H^2}^2$, we use a hypo-coercivity estimate. Taking $L^2$ energy estimate on \eqref{Main31} with $\nabla \phi$, we get
\begin{align*}
	&\gamma^{-1}\frac{d}{dt}\int_{\R^3} u \cdot \nabla\phi\,dx +\gamma^{-1}\|\nabla\phi\|_{L^2}^2\nonumber\\
	&\quad =\gamma^{-1} \int_{\R^3} u \cdot \nabla(-\text{div}u+f_1)\,dx - \int_{\R^3} u\cdot \nabla\phi\,dx - \gamma^{-1} \int_{\R^3} (u\cdot\nabla u) \cdot \nabla\phi\, dx \\
	&\quad \leq \gamma^{-1}\|\text{div}u\|_{L^2}^2+\gamma^{-1} \int_{\R^3} u \cdot \nabla f_1\,dx+ \|u\|_{L^2}\|\nabla\phi\|_{L^2}
	+\gamma^{-1}\|u\|_{L^6}\|\nabla u\|_{L^2}\|\nabla\phi\|_{L^3}\nonumber\\
	&\quad \leq \frac{1}{2}\gamma^{-1}\|\nabla \phi\|_{L^2}^2+C\gamma\|u\|_{H^2}^2+C\gamma^{-1}\|\nabla u\|_{H^2}^2(1+\gamma^{-2}\|\nabla \phi\|_{H^2}^2)+C\epsilon_0\gamma^{-1}\|\nabla u\|_{H^2}^2,
\end{align*}
where we used the fact that
\begin{align*}
	\gamma^{-1} \int_{\R^3} u \cdot \nabla f_1\,dx &=-\gamma^{-1} \int_{\R^3} \text{div}u \cdot f_1\,dx\cr
	& =-	\gamma^{-1} \int_{\R^3} \text{div}u \cdot (-u\cdot\nabla\phi+u\cdot\nabla V)\,dx \\	
	&\leq 	\gamma^{-1}\|\text{div}u\|_{L^2}(\|u\|_{L^6}\|\nabla\phi\|_{L^3}+\|u\|_{L^6}\|\nabla  V \|_{L^3})\nonumber\\
	&\leq C	\gamma^{-1}\|\nabla u\|_{L^2}(\|\nabla u\|_{L^2}\|\nabla \phi\|_{H^1}+\|\nabla u\|_{L^2}\|\nabla V\|_{H^1})\nonumber\\
	&\leq C	\gamma^{-1}\|\nabla u\|_{L^2}^2\|\nabla\phi\|_{H^1}+C\epsilon_0\gamma^{-1}\|\nabla u\|_{L^2}^2\nonumber\\
	&\leq C\gamma \|u\|_{H^2}^2+C\gamma^{-3}\|\nabla u\|_{H^2}^2\|\nabla\phi\|_{H^2}^2+C\epsilon_0\gamma^{-1}\|\nabla u\|_{H^2}^2.
\end{align*}
This deduces
\begin{align*}
	&\gamma^{-1}\frac{d}{dt}\int_{\R^3}u \cdot \nabla\phi\,dx +\frac{1}{2}\gamma^{-1}\|\nabla\phi\|_{L^2}^2\nonumber\\
	&\quad \leq C\gamma\|u\|_{H^2}^2+C\gamma^{-1}\|\nabla u\|_{H^2}^2(1+\gamma^{-2}\|\nabla \phi\|_{H^2}^2)+C\epsilon_0\gamma^{-1}\|\nabla u\|_{H^2}^2.
\end{align*}
Similarly, we obtain that for $k=1,2$
\begin{align*}
	&\gamma^{-1}\frac{d}{dt} \int_{\R^3} \nabla^k u \cdot \nabla^{k+1}\phi\,dx +\frac{1}{2}\gamma^{-1}\|\nabla^{k+1}\phi\|_{L^2}^2\nonumber\\
	&\quad \leq C\gamma\|u\|_{H^2}^2+C\gamma^{-1}\|\nabla u\|_{H^2}^2(1+\gamma^{-2}\|\nabla \phi\|_{H^2}^2)+C\epsilon_0\gamma^{-1}\|\nabla u\|_{H^2}^2.
\end{align*}
Combining the above gives
\begin{align}
	&\gamma^{-1}\frac{d}{dt} \sum_{k=0}^2 \int_{\R^3} \nabla^k u \cdot \nabla^{k+1}\phi\,dx+\frac{1}{2}\gamma^{-1}\|\nabla \phi\|_{H^2}^2\nonumber\\
	&\quad \leq C\gamma\|u\|_{H^2}^2+C\gamma^{-1}\|\nabla u\|_{H^2}^2(1+\gamma^{-2}\|\nabla \phi\|_{H^2}^2)+C\epsilon_0\gamma^{-1}\|\nabla u\|_{H^2}^2.\label{030605}
\end{align}
Thanks to Proposition \ref{A2}, we obtain
\begin{align*}
	\|u\|_{L^2}=\|\frac{m}{\rho}\|_{L^2}\leq \frac{2}{\rho_1}\|m\|_{L^2}\leq C\|m\|_{L^2},\quad \|m\|_{L^2}\leq \|\rho\|_{L^\infty}\|u\|_{L^2}\leq C\|u\|_{L^2},
\end{align*}
i.e., $\|u\|_{L^2}\sim \|m\|_{L^2}$. For the first-order derivatives, we estimate
\begin{align*}
	\|\nabla u\|_{L^2}&=\left\|\frac{\nabla m}{\rho}-\frac{m\cdot\nabla\rho}{\rho^2}\right\|_{L^2}\\
	&\leq \frac{2}{\rho_1}\|\nabla m\|_{L^2}+\frac{4}{\rho_1^2}\| m\|_{L^6}\|\nabla\rho\|_{L^3}\nonumber\\
	&\leq C\|\nabla m\|_{L^2}+C\|\nabla m\|_{L^2}(\|\nabla(\rho-\rho_\infty)\|_{H^1}+\|\nabla \rho_\infty\|_{H^1})\nonumber\\
	&\leq C\|\nabla m\|_{L^2}+C (B_0^4+B_0^2+\epsilon_0) \|\nabla m\|_{L^2}\nonumber\\
	&\leq C\|\nabla m\|_{L^2},
\end{align*}
and
\begin{align*}
	\|\nabla m\|_{L^2} 
	&\leq \|\nabla\rho\|_{L^3}\|u\|_{L^6}+\|\rho\|_{L^\infty}\|\nabla u\|_{L^2}\\
	&\leq C(\|\nabla(\rho-\rho_\infty)\|_{H^2}+\|\nabla\rho_\infty\|_{H^2})\|\nabla u\|_{L^2}\\
	&\leq C\|\nabla u\|_{L^2},
\end{align*}
which implies that $\|\nabla u\|_{L^2}\sim \|\nabla m\|_{L^2}$. However, the second-order derivatives are different. Indeed, we find
\begin{align*}
	&\|\nabla^2u\|_{L^2}\cr
	&\quad =\left\|\frac{\nabla^2m}{\rho}-\frac{2\nabla m\cdot\nabla\rho}{\rho^2}-\frac{m\cdot\nabla^2\rho}{\rho^2}+\frac{2m |\nabla\rho|^2}{\rho^3} \right\|_{L^2}	\nonumber\\
	&\quad \leq C\|\nabla^2m\|_{L^2}+C\|\nabla m\|_{L^6}\|\nabla\rho\|_{L^3}+C\|m\|_{L^\infty}\|\nabla^2\rho\|_{L^2}+C\| m\|_{L^\infty}\|\nabla  \rho\|_{L^6}\|\nabla\rho\|_{L^3}\nonumber\\
	&\quad \leq C\|\nabla^2m\|_{L^2}+C\|\nabla^2 m\|_{L^2}(\|\nabla(\rho-\rho_\infty)\|_{H^1}+\|\nabla\rho_\infty\|_{H^1})\nonumber\\
	&\qquad+C\|\nabla m\|_{L^2}^{\frac{1}{2}}\|\nabla^2m\|_{L^2}^{\frac{1}{2}}(\|\nabla^2(\rho-\rho_\infty)\|_{L^2}+\|\nabla^2\rho_\infty\|_{L^2})\nonumber\\
	&\qquad+C\|\nabla m\|_{L^2}^{\frac{1}{2}}\|\nabla^2m\|_{L^2}^{\frac{1}{2}}(\|\nabla^2(\rho-\rho_\infty)\|_{L^2}+\|\nabla^2\rho_\infty\|_{L^2})(\|\nabla(\rho-\rho_\infty)\|_{H^1}+\|\nabla\rho_\infty\|_{H^1})\nonumber\\
	&\quad \leq C(\|\nabla m\|_{L^2}+\|\nabla^2 m\|_{L^2})\nonumber\\
	&\quad \leq C\|\nabla m\|_{H^1}
\end{align*}
and 
\begin{align*}
	\|\nabla^2m\|_{L^2}&\leq \|\nabla^2\rho\|_{L^2}\|u\|_{L^\infty}+2\|\nabla\rho\|_{L^3}\|\nabla u\|_{L^6}+\|\rho\|_{L^\infty}\|\nabla^2u\|_{L^2}\\
	&\leq C(\|\nabla^2(\rho-\rho_\infty)\|_{L^2}+\|\nabla^2\rho_\infty\|_{L^2})	\|\nabla u\|_{H^1}\\
	&\quad+C(\|\nabla(\rho-\rho_\infty)\|_{H^1}+\|\nabla\rho_\infty\|_{H^1})\|\nabla^2 u\|_{L^2}+C\|\nabla^2u\|_{L^2}\\
	&\leq C(\|\nabla u\|_{L^2}+\|\nabla^2u\|_{L^2})\\
	&\leq C\|\nabla u\|_{H^1}.
\end{align*}
Similarly, we deduce
\begin{align*}
	\|\nabla^3u\|_{L^2}\leq C\|\nabla m\|_{H^2},  \quad \|\nabla^3 m\|_{L^2}\leq C\|\nabla u\|_{H^2}.
\end{align*}
Hence, we have the following relations:
\[
	\|\nabla u\|_{H^2}\sim  \|\nabla m\|_{H^2},\quad 	\| u\|_{H^3}\sim  \|m\|_{H^3}.
\]
Combining the above estimates with \eqref{030604} implies
\[
	\int_0^t\|u(s)\|_{H^2}^2ds\leq C\int_0^t\|m(s)\|_{H^2}^2ds\leq C\gamma^{-1} (B_0^4+B_0^2). 
	\]
In particular, we have
\begin{align}\label{030606}
	\|(\rho-\rho_\infty, m,u)\|_{H^2}^2+\gamma \int_0^t\|u(s)\|_{H^2}^2ds
	\leq C (B_0^4+B_0^2) .
\end{align}
For the estimate of $\|\nabla(\phi, u)\|_{H^2}^2$,  we notice that
\begin{align*}
	\|\nabla\phi\|_{H^2}\leq C\|\nabla(\rho-\rho_\infty)\|_{H^2}\leq C\gamma^{\frac{1}{2}}\delta.
\end{align*}
Integrating \eqref{030605} with respect to time and using \eqref{030607} and \eqref{030606} yields
\begin{align*}
	&\gamma^{-1}
	\int_0^t\|\nabla \phi(s)\|_{H^2}^2ds \\
	&\quad \leq 2\gamma^{-1}\left|\sum_{k=0}^2(\nabla^k u,\nabla^{k+1}\phi)\right| +  2\gamma^{-1}\left|\sum_{k=0}^2(\nabla^k u_0,\nabla^{k+1}\phi_0)\right|  +C\gamma\int_0^t\|u(s)\|_{H^2}^2ds \\
	&\qquad +C\gamma^{-1}\int_0^t\|\nabla u(s)\|_{H^2}^2(1+\gamma^{-2}\|\nabla \phi(s)\|_{H^2}^2)ds+C\epsilon_0\gamma^{-1}\int_0^t\|\nabla u(s)\|_{H^2}^2ds \\
	&\quad \leq C\delta^2+ C(B_0^4+B_0^2) +C\gamma^{-2} (B_0^4+B_0^2) \\
	&\qquad +C\gamma^{-3}\delta^2 \int_0^t\|\nabla\phi(s)\|_{H^2}^2ds+C\epsilon_0\gamma^{-1}\delta^2.
\end{align*}
Since $\gamma^{-1}$ and $\delta$ are small, we show that there exists a positive constant $\mathcal{X}_0$ such that 
\begin{align}\label{030601}
	\gamma^{-1}
	\int_0^t\|\nabla \phi(s)\|_{H^2}^2ds\leq \mathcal{X}_0(B_0^4+B_0^2) .
\end{align}
To close the \emph{a priori} assumption, we choose 
\[
\mathcal{R}_0 := \sqrt{2\mathcal{X}_0(B_0^4+B_0^2)} ,
\]
which implies that 
\[
	\gamma^{-1}
	\int_0^t\|\nabla \phi(s)\|_{H^2}^2ds\leq \frac{1}{2}\mathcal{R}_0^2.
\]
On the other hand, it follows from \eqref{022501} that 
\[
	\frac{d}{dt}\|\nabla(\phi, u)\|_{H^2}^2+\gamma\| \nabla u\|_{H^2}^2\leq C\gamma^{-1}\|\nabla \phi\|_{H^2}^4+C\gamma^{-1}\epsilon_0^2\|\nabla\phi\|_{H^2}^2,
\]
and applying Gr\"{o}nwall's lemma together with the bound estimate \eqref{030601}, we deduce 
\begin{align*}
	&\|\nabla(\phi,u)(t)\|_{H^2}^2 +\gamma\int_0^t\|\nabla u(s)\|_{H^2}^2\,ds\cr
	&\quad \leq e^{C\gamma^{-1}\int_0^t\|\nabla\phi(s)\|_{H^2}^2ds}\left(\|\nabla(\phi_0,u_0)\|_{H^2}^2+ C\gamma^{-1}\epsilon_0^2\int_0^t\|\nabla\phi(s)\|_{H^2}^2\,ds\right)\nonumber\\
	&\quad \leq e^{C(B_0^4+B_0^2)}(B_0^2+C\epsilon_0^2(B_0^4+B_0^2))\nonumber\\
	&\quad \leq CB_0^2e^{C(B_0^4+B_0^2)} \nonumber\\
	&\quad\leq C_0,
\end{align*}
where the constant $C_0=C(B_0)>0$ only depends on $B_0$. 

 %
 %
 %
 %
 %
 %
 %
 %
 %
 %

 \vfill

\end{document}